\title{The class of $(2P_3,C_4,C_6)$-free graphs, part I: $(2P_3,C_4,C_6)$-free graphs that contain an induced $C_7$ or an induced $T_0$} 
\author{Irena Penev\thanks{Computer Science Institute (I\'UUK, MFF), Charles University, Prague, Czech Republic. Supported by GA\v{C}R grant 25-17377S. Email: {ipenev@iuuk.mff.cuni.cz}.}}
\numberwithin{figure}{section}
\newtheorem{theorem}{Theorem}[section]  
\newtheorem{proposition}[theorem]{Proposition} 
\newtheorem{lemma}[theorem]{Lemma} 
\newtheorem{corollary}[theorem]{Corollary} 
\newtheorem{claim}{Claim}[theorem]
\begin{document} 
\maketitle 
\noindent 

\begin{abstract} 
\noindent 
This is the first in a series of two papers dealing with $(2P_3,C_4,C_6)$-free graphs, or equivalently, $(2P_3,\text{even hole})$-free graphs. In this two-paper series, we give a full structural description of $(2P_3,C_4,C_6)$-free graphs that contain no simplicial vertices, and we show that such graphs have bounded clique-width. This implies that \textsc{Graph Coloring} can be solved in polynomial time for $(2P_3,C_4,C_6)$-free graphs. In this paper, we describe the structure of $(2P_3,C_4,C_6)$-free graphs that contain an induced $C_7$ or an induced $T_0$ (where $T_0$ is a certain 2-connected graph on nine vertices in which all holes are of length five), and we show that such graphs either contain a simplicial vertex or have bounded clique-width. In the second part of this series, we describe the structure of all $(2P_3,C_4,C_6,C_7,T_0)$-free graphs that contain no simplicial vertices, and we show that such graphs have bounded clique-width. The full statement of the theorem describing the structure of $(2P_3,C_4,C_6)$-free graphs that contain no simplicial vertices is given in the second paper of this series. 
\end{abstract}

\section{Introduction} 

All graphs in this paper are finite, simple, and nonnull. We mostly use standard terminology and notation, formally introduced in section~\ref{sec:prelim}. For now, let us define a few basic terms. As usual, for a positive integer $k$, $K_k$ is the complete graph on $k$ vertices, $P_k$ is the path on $k$ vertices and $k-1$ edges, and  $C_k$ ($k \geq 3$) is the cycle on $k$ vertices. For a positive integer $k$ and a graph $H$, the disjoint union of $k$ copies of the graph $H$ is denoted by $kH$. (See Figure~\ref{fig:ForbiddenIndSgs} for some graphs relevant to this paper.) For a graph $H$, we say that a graph $G$ is {\em $H$-free} if no induced subgraph of $G$ is isomorphic to $H$. For a family of graphs $\mathcal{H}$, we say that a graph $G$ is {\em $\mathcal{H}$-free} if $G$ is $H$-free for all $H \in \mathcal{H}$.  A {\em hole} in a graph $G$ is an induced cycle on at least four vertices, and the {\em length} of a hole is the number of vertices (equivalently: edges) that it contains. A hole is {\em even} (resp.\ {\em odd}) if its length is even (resp.\ odd). A {\em clique} (resp.\ {\em stable set}) is a (possibly empty) set of pairwise adjacent (resp.\ nonadjacent) vertices, and a {\em simplicial vertex} is a vertex whose neighbors form a (possibly empty) clique. A {\em proper coloring} of a graph $G$ is an assignment of colors to the vertices of $G$ in such a way that no two adjacent vertices receive the same color. For an integer $k$, a graph $G$ is said to be {\em $k$-colorable} if there exists a proper coloring of $G$ that uses at most $k$ colors. The {\em chromatic number} of $G$, denoted by $\chi(G)$, is the smallest integer $k$ such that $G$ is $k$-colorable. \textsc{Graph Coloring} is the following problem. 

\bigskip 

\begin{minipage}{\textwidth}  
\textsc{Graph Coloring} 

\textbf{Instance:} A graph $G$ and an integer $k$. 

\textbf{Question:} Is $G$ $k$-colorable? 
\end{minipage} 

\bigskip 

This paper is the first in a series of two papers that deal with $(2P_3,C_4,C_6)$-free graphs. In this series, we give a full structural description of $(2P_3,C_4,C_6)$-free graphs that contain no simplicial vertices, and we show that \textsc{Graph Coloring} can be solved in polynomial time for $(2P_3,C_4,C_6)$-free graphs. 

\begin{figure}
\begin{center}
\includegraphics[scale=0.5]{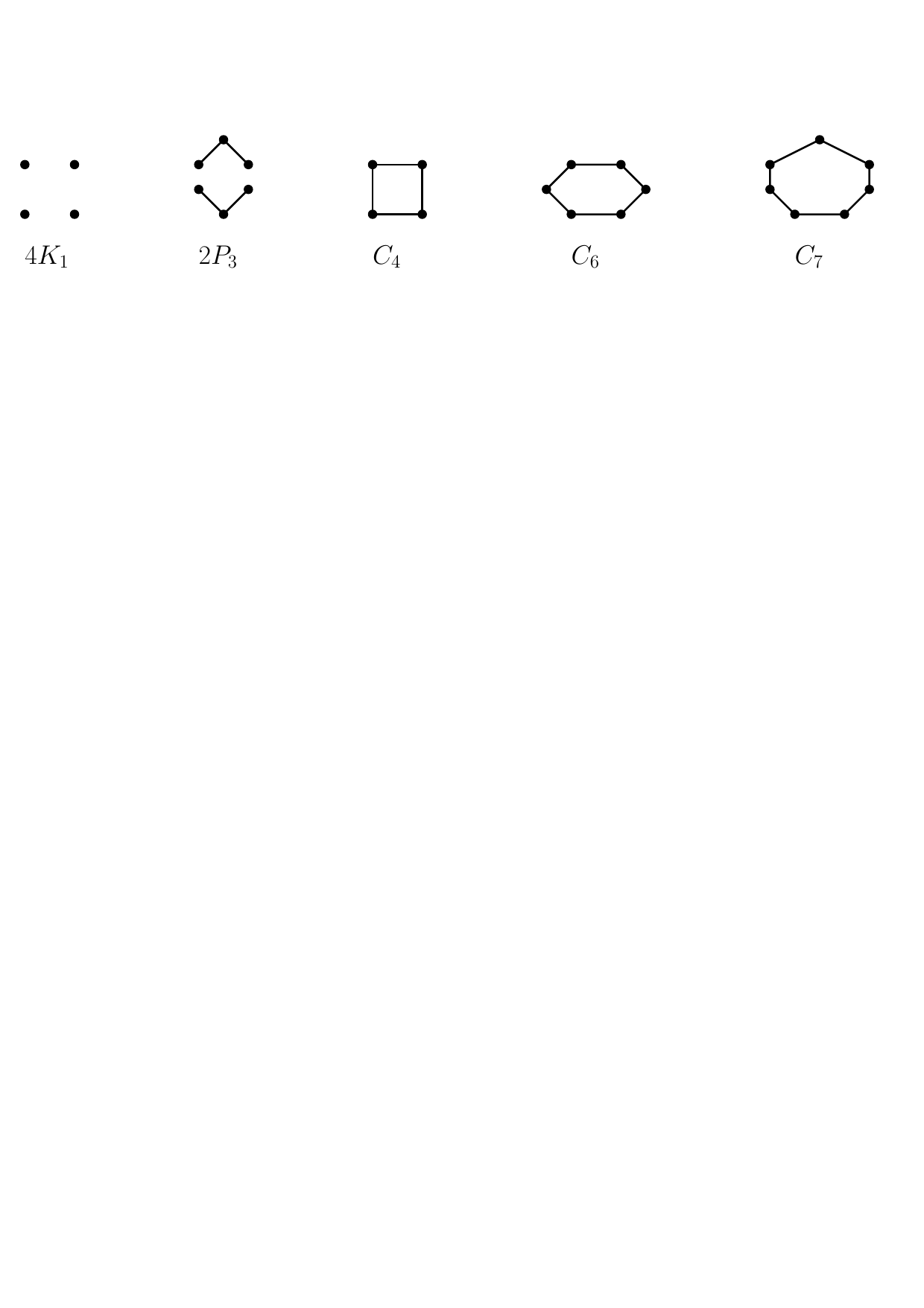}
\end{center} 
\caption{From left to right: graphs $4K_1$, $2P_3$, $C_4$, $C_6$, and $C_7$.} \label{fig:ForbiddenIndSgs} 
\end{figure} 

Before describing our results in more detail, let us make some general remarks and give the relevant context for our work. $(2P_3,C_4,C_6)$-free graphs are trivially recognizable in $O(n^6)$ time. Further, the \textsc{Maximum Clique} problem can be solved in polynomial time for $C_4$-free graphs~\cite{Alekseev-C4free, Farber, MakinoUno, TIAS};\footnote{Indeed, any $C_4$-free graph has only $O(n^2)$ maximal cliques~\cite{Alekseev-C4free, Farber}, and if a graph $G$ has $K$ maximal cliques, they can all be found in $O(Kn^3)$ time~\cite{MakinoUno, TIAS}. So, all maximal cliques of a $C_4$-free graph can be found in $O(n^5)$ time, and it immediately follows that the \textsc{Maximum Clique} problem can be solved in $O(n^5)$ time for $C_4$-free graphs.} the \textsc{Maximum Stable Set} problem can be solved in polynomial time for all $(\text{banner},P_8)$-free graphs~\cite{BannerP8FreeStable}, where the {\em banner} is the five-vertex graph consisting of a $C_4$ with a pendant edge;\footnote{Since the banner contains an induced $C_4$, and $P_8$ contains an induced $2P_3$, we see that all $(2P_3,C_4,C_6)$-free graphs are, in particular, $(\text{banner},P_8)$-free.} and the \textsc{3-Coloring} problem can be solved in polynomial time for $sP_3$-free graphs for any fixed positive integer $s$~\cite{sP3Update}. Therefore, the \textsc{Maximum Clique}, \textsc{Maximum Stable Set}, and \textsc{3-Coloring} problems are all solvable in polynomial time for $(2P_3,C_4,C_6)$-free graphs. As stated above, one of the main goals of our two-paper series is to show that the \textsc{Graph Coloring} problem can be solved in polynomial time for $(2P_3,C_4,C_6)$-free graphs. 

We note that $4K_1$ is an induced subgraph of $2P_3$, and so $(2P_3,C_4,C_6)$-free graphs form a proper superclass of the class of $(4K_1,C_4,C_6)$-free graphs. The structure of $(4K_1,C_4,C_6)$-free graphs is almost completely understood: a full structural description of $(4K_1,C_4,C_6)$-free graphs that contain an induced $C_7$ was given in~\cite{ChinhHoang}, and a full structural description of $(4K_1,C_4,C_6,C_7)$-free graphs that contain no simplicial vertices was given in~\cite{4K1C4C6C7Free}. Moreover, all the following problems can be solved in $O(n^3)$ time for $(4K_1,C_4,C_6)$-free graphs: recognition, \textsc{Maximum Clique}, and \textsc{Graph Coloring}~\cite{4K1C4C6Alg}.\footnote{Clearly, the stability number of any $4K_1$-free graph is at most three, and so the \textsc{Maximum Stable Set} problem can trivially be solved in $O(n^3)$ for $4K_1$-free graphs, and consequently, for $(4K_1,C_4,C_6)$-free graphs as well.} On the other hand, we note that every hole of length at least eight contains an induced $2P_3$, and consequently, all holes in a $(2P_3,C_4,C_6)$-free graph are of length five or seven; thus, $(2P_3,C_4,C_6)$-free graphs are even-hole-free, and in fact, $(2P_3,C_4,C_6)$-free graphs are precisely the $(2P_3,\text{even hole})$-free graphs. The class of even-hole-free graphs has received a great deal of attention over the past couple of decades. In particular, decomposition theorems have been proven for even-hole-free graphs~\cite{EvenHoleFreeDecomp, EvenHoleFreeDecompSV}, such graphs can be recognized in polynomial time~\cite{EvenHoleFreeRecFast, EvenHoleFreeRec}, and they are $\chi$-bounded by a linear function~\cite{EvenHoleFreeBisimplicial}. As mentioned above, the \textsc{Maximum Clique} problem can be solved in polynomial time for $C_4$-free graphs, and consequently, for even-hole-free graphs as well. On the other hand, the time complexity of the \textsc{Maximum Stable Set} and \textsc{Graph Coloring} problems is unknown for even-hole-free graphs. Finally, let us mention that the the structure of ``$\ell$-holed graphs,'' i.e.\ graphs in which all holes are of length $\ell$, is well understood for each integer $\ell \geq 7$ (see~\cite{L-Holed}). As pointed out above, all holes in a $(2P_3,C_4,C_6)$-free graph are of length five or seven, and consequently, all $(2P_3,C_4,C_6,C_7)$-free graphs are 5-holed. So, while general 5-holed graphs are not well understood, our results yield a good description of one particular subclass of 5-holed graphs, namely, that of $(2P_3,C_4,C_6,C_7)$-free graphs. 

Let us now describe our results in a bit more detail. Our main goal in this two-paper series is to give a full structural description of all $(2P_3,C_4,C_6)$-free graphs that contain no simplicial vertices, and to prove that such graphs have bounded clique-width (for a formal definition of ``clique width,'' see section~\ref{sec:cwd}). Since simplicial vertices pose no obstacle to coloring in polynomial time,\footnote{Indeed, if $k$ is an integer, and $x$ is a simplicial vertex of a graph $G$ on at least two vertices, then $G$ is $k$-colorable if and only if $d_G(x) \leq k-1$ and $G \setminus x$ is $k$-colorable. Moreover, the simplicial vertices of a graph (if any) can easily be found by simply examining the neighborhood of each vertex and checking if that neighborhood is a clique.} and since \textsc{Graph Coloring} can be solved in polynomial time for graphs of bounded clique-width~\cite{Rao}, it follows that the \textsc{Graph Coloring} problem can be solved in polynomial time for $(2P_3,C_4,C_6)$-free graphs. We note, however, that $(2P_3,C_4,C_6)$-free graphs do not have bounded clique-width: only those $(2P_3,C_4,C_6)$-free graphs that contain no simplicial vertices do. Indeed, it was shown in~\cite{CW4Vertex} that $4K_1$-free chordal graphs have unbounded clique-width (a {\em chordal graph} is a graph that contains no holes, i.e.\ a graph in which all induced cycles are triangles), and clearly, $4K_1$-free chordal graphs form a proper subclass of $(2P_3,C_4,C_6)$-free graphs. However, it is well known that all chordal graphs have a simplicial vertex~\cite{D61}. 

$T_0$ is the graph on nine vertices represented in Figure~\ref{fig:3pentagonT0T1} (center); clearly, $T_0$ is 2-connected, and it is not hard to check that $T_0$ is $(2P_3,C_4,C_6,C_7)$-free (for a formal proof, see Proposition~\ref{prop-T0T1-2P3C4C6Free}), and in particular, all holes in $T_0$ are of length five. In this paper, we give a full structural description of $(2P_3,C_4,C_6)$-free graphs that contain an induced $C_7$ or an induced $T_0$ (see Theorem~\ref{thm-main-withC7T0-full}); we note that this structural description yields an $O(n^3)$ time recognition algorithm for $(2P_3,C_4,C_6)$-free graphs that contain an induced $C_7$ or an induced $T_0$ (see section~\ref{sec:Alg}). We also give a full structural description of all $(2P_3,C_4,C_6)$-free graphs that contain an induced $C_7$ or an induced $T_0$, and contain no simplicial vertices (see Theorem~\ref{thm-main-withC7T0}), and we show that all such graphs have bounded clique-width (see Theorem~\ref{thm-cwd-in-class-with-C7-T0}). In the second part of this series~\cite{2P3C4C6FreePart2}, we give a full structural description of $(2P_3,C_4,C_6,C_7,T_0)$-free graphs that contain no simplicial vertices, and we show that such graphs have bounded clique-width. Together, these two papers yield a full structural description of $(2P_3,C_4,C_6)$-free graphs that contain no simplicial vertices, and the fact that all such graphs have bounded clique-width; as explained above, this implies that \textsc{Graph Coloring} can be solved in polynomial time for $(2P_3,C_4,C_6)$-free graphs. The full formal statement of our structure theorem for $(2P_3,C_4,C_6)$-free graphs that contain no simplicial vertices is given in the second paper of our series.\footnote{We omit the statement of this theorem in the present paper because, to make sense of the theorem, we would need some rather lengthy definitions that are relevant for $(2P_3,C_4,C_6,C_7,T_0)$-free graphs, and those definitions are only given in the second paper.} 

\begin{figure}
\begin{center}
\includegraphics[scale=0.5]{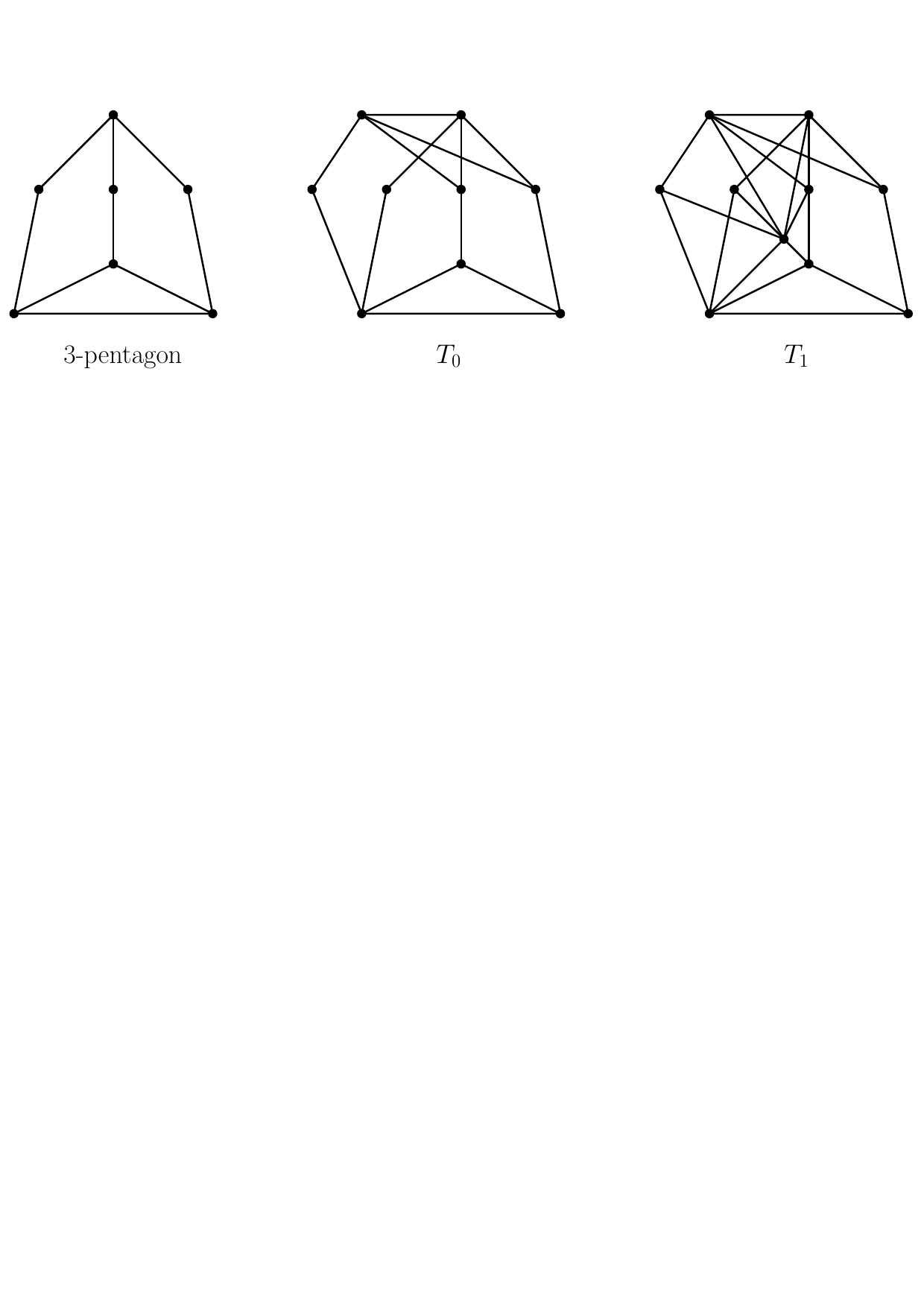}
\end{center} 
\caption{The 3-pentagon (left), $T_0$ (center), and $T_1$ (right).} \label{fig:3pentagonT0T1} 
\end{figure}

It may be worth emphasizing that, while we were able to fully describe the structure of $(2P_3,C_4,C_6)$-free graphs that contain an induced $C_7$ or an induced $T_0$ (regardless of the presence or absence of simplicial vertices), in the $(2P_3,C_4,C_6,C_7,T_0)$-free case, simplicial vertices complicate matters more substantially, and for this reason, in the second paper of our series, we restrict our attention to those $(2P_3,C_4,C_6,C_7,T_0)$-free graphs that contain no simplicial vertices. Thus, even though we ultimately get a full structural description of $(2P_3,C_4,C_6)$-free graphs that contain no simplicial vertices, this is not quite a full structure theorem for all $(2P_3,C_4,C_6)$-free graphs. This is essentially because the class of $(2P_3,C_4,C_6)$-free graphs is not closed under the addition of simplicial vertices, that is, adding a simplicial vertex to a $(2P_3,C_4,C_6)$-free graph may possibly produce a graph that contains an induced $2P_3$,\footnote{This is because $2P_3$ itself contains a simplicial vertex. On the other hand, since holes do not contain simplicial vertices, it is not possible to generate a new hole by simply adding one or more simplicial vertices, and in particular, any $(C_4,C_6)$-free graph remains $(C_4,C_6)$-free if we add a simplicial vertex to it.} and is consequently not $(2P_3,C_4,C_6)$-free. Of course, for many algorithmic problems (including \textsc{Graph Coloring}, but by no means limited to it), simplicial vertices can be handled with ease.

\subsection{Further remarks on our results and some related open problems} 

Let us now briefly discuss some problems related to our work that still remain open. First, while \textsc{Graph Coloring} is indeed solvable in polynomial time for graphs of bounded clique-width, all known polynomial-time coloring algorithms for graphs of clique-width at most $k$ have running time $O(n^{f(k)})$ for some fast growing function $f$ (see~\cite{CWcol} for an overview). So, it might be interesting to try to develop a faster coloring algorithm for $(2P_3,C_4,C_6)$-free graphs, perhaps using the structural results described in our two-paper series. Despite substantial effort, we have thus far not been able to develop a coloring algorithm for this class that does not rely on clique-width. More precisely, using the results of~\cite{Koutecky, Lampis} and the structural results of the present paper, we can in fact color $(2P_3,C_4,C_6)$-free graphs that contain an induced $C_7$ or an induced $T_0$ in only $O(n^3)$ time (see section~\ref{sec:Alg}). (Here, we use the results of~\cite{Koutecky, Lampis} as a black box, but we do note that the algorithms in question rely on integer programming and are therefore not combinatorial.) However, in the second paper of our series, we are forced to rely on bounded clique-with to color $(2P_3,C_4,C_6,C_7,T_0)$-free graphs, which yields a slow running time. 

Second, note that $(2P_3,C_4,C_6)$-free graphs form a proper subclass of the class of $(P_7,C_4,C_6)$-free graphs. It would, of course, be interesting to describe the structure of $(P_7,C_4,C_6)$-free graphs and to determine the time complexity of \textsc{Graph Coloring} for this class. Despite quite some effort, we have not succeeded in doing this. Nevertheless, in section~\ref{sec:withC7}, we in fact describe the structure of $(P_7,C_4,C_6)$-free graphs that contain an induced $C_7$ and do not admit a clique-cutset. This reduces the problem of describing the structure of $(P_7,C_4,C_6)$-free graphs to that of describing the structure of $(P_7,C_4,C_6,C_7)$-free graphs, as long as we are willing to regard the clique-cutset as an acceptable decomposition in this context.\footnote{Note, however, that $P_7$ admits a clique-cutset, and consequently, the class of $(P_7,C_4,C_6)$-free graphs is not closed under the operation of gluing along a clique (the operation that ``reverses'' clique-cutsets). The clique-cutset decomposition is nevertheless reasonable to accept in this context, since there are powerful algorithmic tools for handling this decomposition (see~\cite{Tarjan}).} However, so far, we have not been able to make any meaningful progress on the structure of $(P_7,C_4,C_6,C_7)$-free graphs. 

\begin{figure} 
\begin{center}
\includegraphics[scale=0.5]{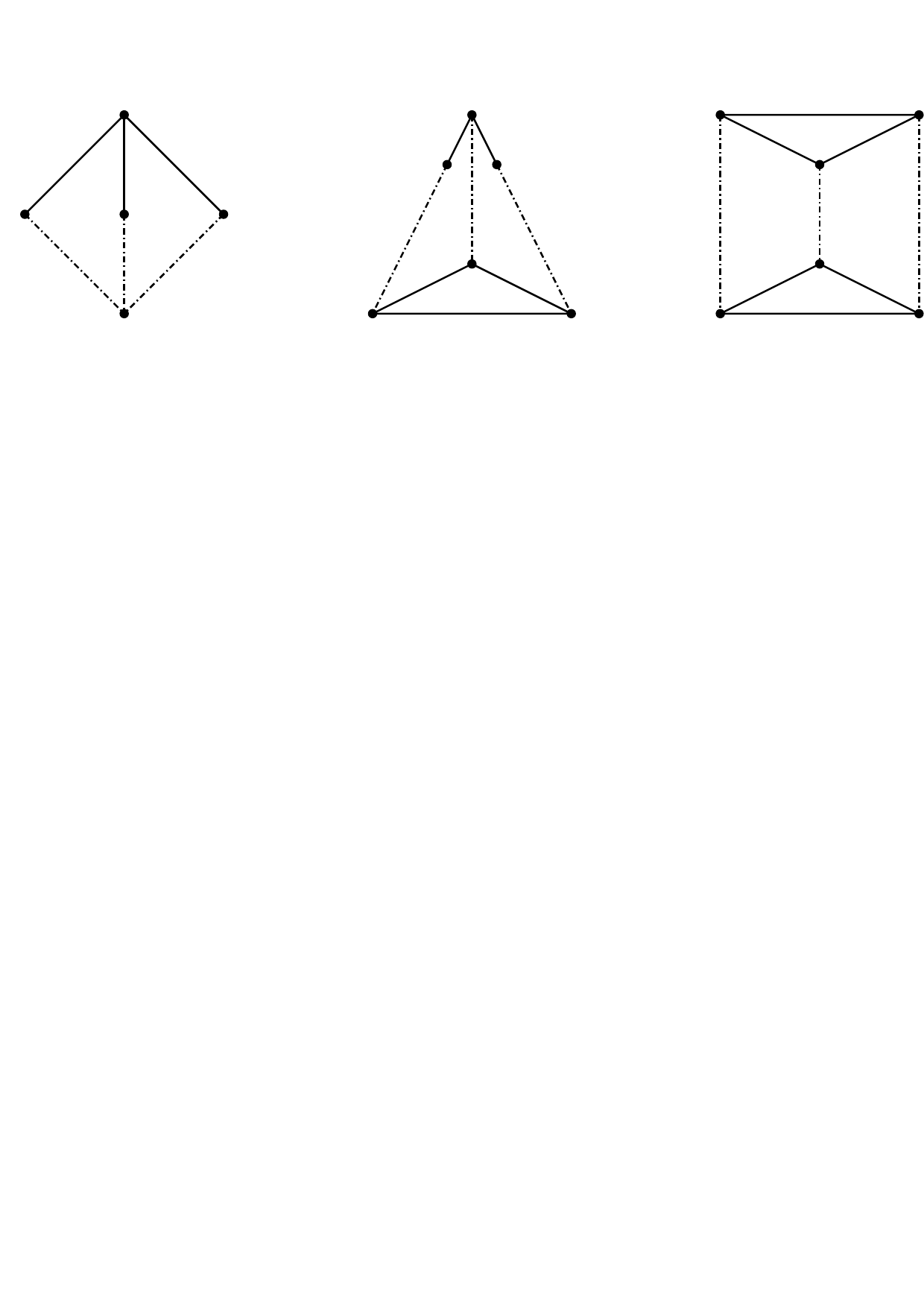}
\end{center} 
\caption{Three-path-configurations (3PCs): theta (left), pyramid (middle), and prism (right). A full line represents an edge, and a dashed line represents a path that has at least one edge.} \label{fig:Truemper} 
\end{figure}

\begin{figure}
\begin{center}
\includegraphics[scale=0.4]{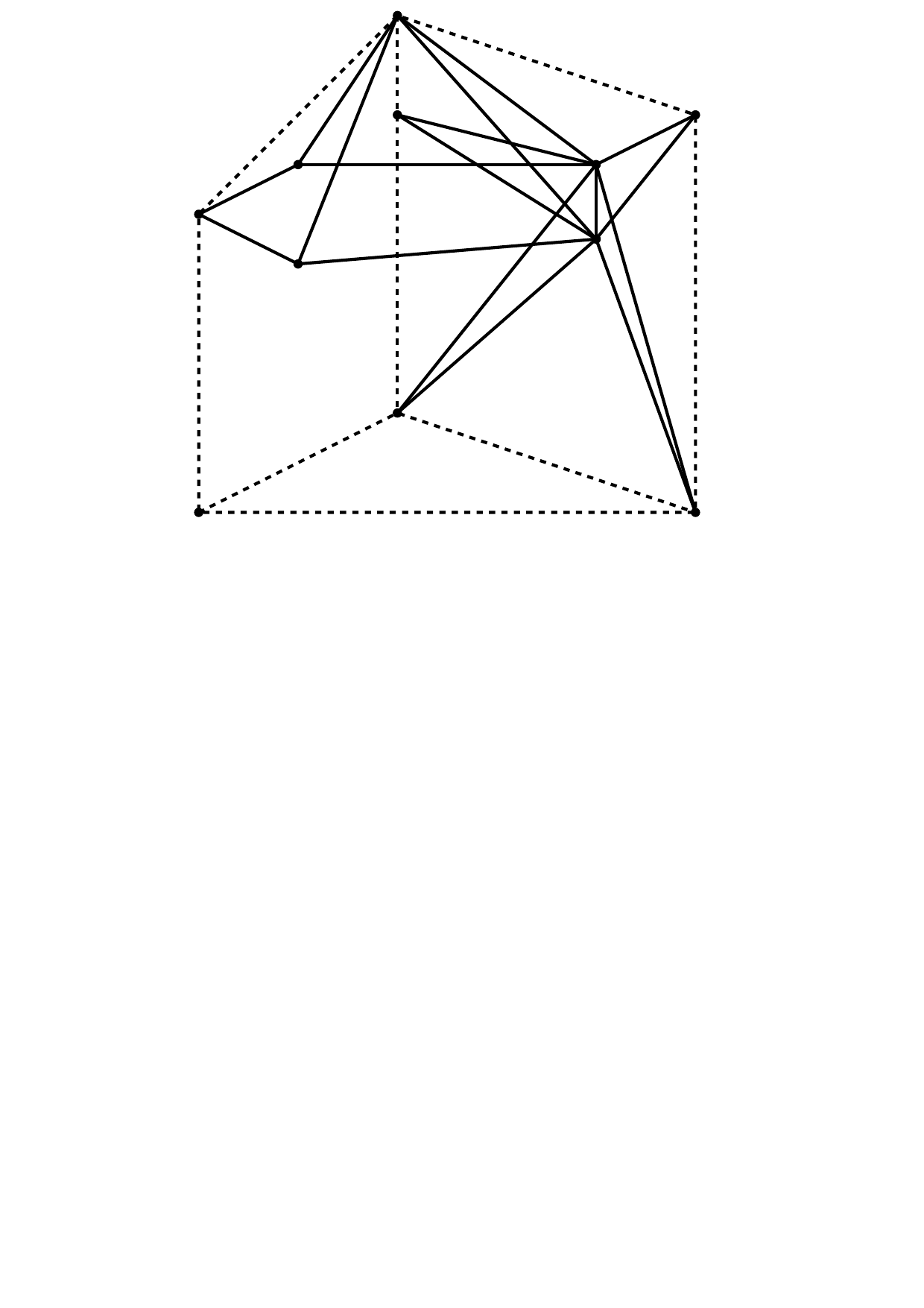} 
\hspace*{2cm}
\includegraphics[scale=0.4]{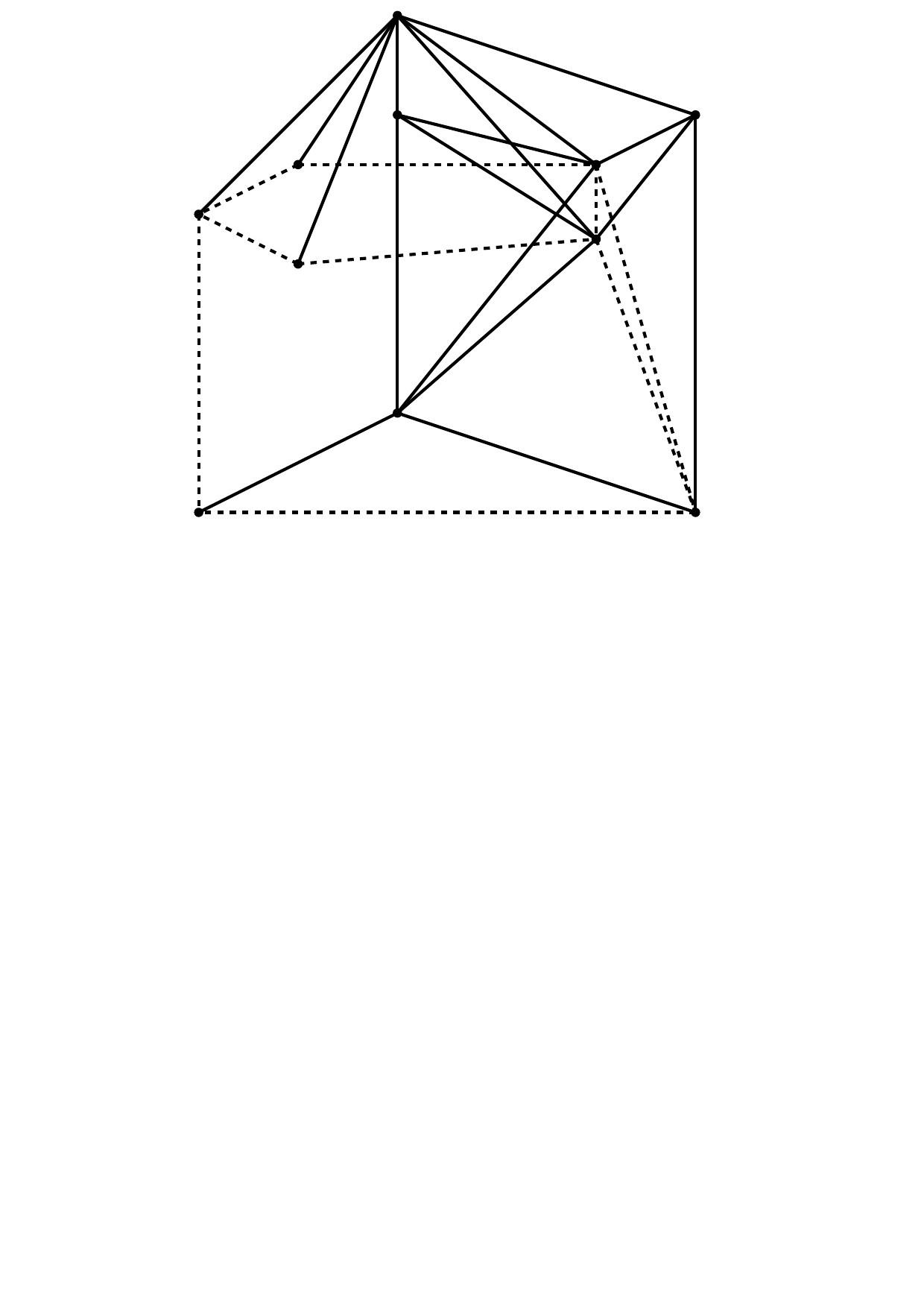}
\end{center} 
\caption{Two copies of the same $(P_7,C_4,C_6,C_7)$-free graph on eleven vertices, with induced 3-pentagons intersecting in an inconvenient way. Both full and dashed lines represent edges. The edges of two induced 3-pentagons are indicated by dashed lines.} \label{fig:BadAttach} 
\end{figure} 

\begin{figure}
\begin{center}
\includegraphics[scale=0.5]{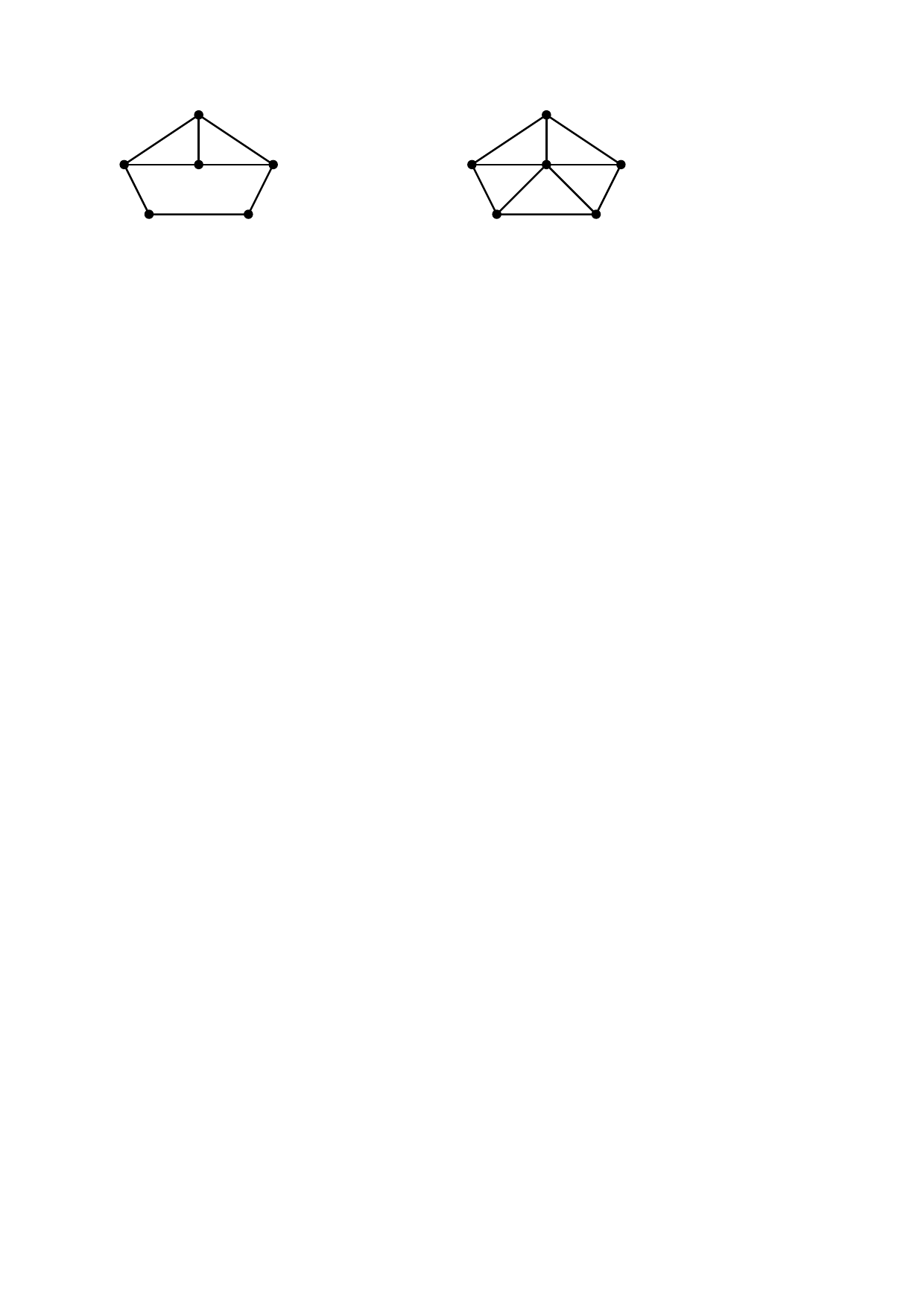}
\end{center} 
\caption{The only wheels in which all holes are of length five.} \label{fig:TwoWheels} 
\end{figure}

Here, it may be worth giving a bit more context. A {\em three-path-configuration} (or {\em 3PC} for short) is any theta, pyramid, or prism (see Figure~\ref{fig:Truemper}). A {\em wheel} is any graph that consists of a hole, plus an additional vertex that has at least three neighbors in the hole. A {\em Truemper configuration} is any 3PC or wheel. For a (slightly dated) survey on Truemper configurations, see~\cite{Truemper-survey}. Clearly, all holes in a $(P_7,C_4,C_6,C_7)$-free graph are of length five. Meanwhile, the only 3PC in which all holes are of length five is the 3-pentagon (see Figure~\ref{fig:3pentagonT0T1}, and note that the 3-pentagon is a type of pyramid). Unfortunately, 3-pentagons may intersect in rather inconvenient ways in $(P_7,C_4,C_6,C_7)$-free graphs (see Figure~\ref{fig:BadAttach} for an example), and this is the main reason why we have not been able to make meaningful progress on $(P_7,C_4,C_6,C_7)$-free graphs. Meanwhile, induced 3-pentagons in $(2P_3,C_4,C_6,C_7)$-free graphs behave in a more manageable fashion (this is studied in the second paper of our series), which is why we were able to get a good structural description of such graphs, as long as they do not contain simplicial vertices. Finally, let us mention in passing that the two wheels represented in Figure~\ref{fig:TwoWheels} are the only wheels in which all holes are of length five; so, those two wheels are the only wheels that a $(P_7,C_4,C_6,C_7)$-free graph may possibly contain as induced subgraphs.

\subsection{Paper outline} 

We complete the introduction with an outline of the paper. In section~\ref{sec:prelim}, we introduce the terminology and notation that we will use throughout the paper (see subsection~\ref{subsec:terminology}), and we also prove a few simple results that will be of use to us as we proceed (see subsection~\ref{subsec:SimpleResults}). 

In section~\ref{sec:withC7}, we prove a structure theorem for $(2P_3,C_4,C_6)$-free graphs that contain an induced $C_7$ (see Theorem~\ref{thm-7-saucer}). We further give a full structural description of $(P_7,C_4,C_6)$-free graphs that contain an induced $C_7$ and do not admit a clique-cutset; as we shall see, these graphs are precisely the same as $(2P_3,C_4,C_6)$-free graphs that contain an induced $C_7$ and contain no simplicial vertices, which are, in turn, precisely the $(4K_1,C_4,C_6)$-free graphs that contain an induced $C_7$ (see Theorem~\ref{thm-main-withC7}). The structure of $(4K_1,C_4,C_6)$-free graphs that contain an induced $C_7$ was originally described in~\cite{ChinhHoang}. 

In section~\ref{sec:withT0}, we give a full structural description of $(2P_3,C_4,C_6,C_7)$-free graphs that contain an induced $T_0$ (see Theorem~\ref{thm-T0-tent}). As a corollary, we obtain a full structural description of $(2P_3,C_4,C_6)$-free graphs that contain an induced $T_0$ and do not contain a simplicial vertex (see Corollary~\ref{cor-T0-tent}). 

In section~\ref{sec:structure}, we combine the results of sections~\ref{sec:withC7} and~\ref{sec:withT0} to obtain the main structural results of this paper: a full structural description of $(2P_3,C_4,C_6)$-free graphs that contain an induced $C_7$ or an induced $T_0$ (see Theorem~\ref{thm-main-withC7T0-full}), and a full structural description of $(2P_3,C_4,C_6)$-free graphs that contain an induced $C_7$ or an induced $T_0$ and contain no simplicial vertices (see Theorem~\ref{thm-main-withC7T0}). 

In section~\ref{sec:cwd}, we prove that every $(2P_3,C_4,C_6)$-free graph that contains an induced $C_7$ or an induced $T_0$, either has a simplicial vertex or has clique-width at most 12 (see Theorem~\ref{thm-cwd-in-class-with-C7-T0}). 

Finally, in section~\ref{sec:Alg}, we show that $(2P_3,C_4,C_6)$-free graphs that contain an induced $C_7$ or an induced $T_0$ can be recognized and colored in $O(n^3)$ time.

\section{Preliminaries} \label{sec:prelim}

\subsection{Terminology and notation} \label{subsec:terminology} 

When we say that ``sets $X_1,\dots,X_{\ell}$ form a partition of the set $X$,'' or that ``$(X_1,\dots,X_{\ell})$ is a partition of the set $X$,'' we mean that sets $X_1,\dots,X_{\ell}$ are pairwise disjoint and that $X = X_1 \cup \dots \cup X_{\ell}$. However, slightly nonstandardly, we allow some (or all) of the $X_i$'s to be empty. 

The vertex and edge set of a graph $G$ are denoted by $V(G)$ and $E(G)$, respectively. For a vertex $x$ in a graph $G$, the {\em open neighborhood} (or simply {\em neighborhood}) of $x$ in $G$, denoted by $N_G(x)$, is the set of all neighbors of $x$ in $G$; the {\em closed neighborhood} of $x$ in $G$, denoted by $N_G[x]$, is defined as $N_G[x] := \{x\} \cup N_G(x)$; the {\em degree} of $x$ in $G$, denoted by $d_G(x)$, is the number of neighbors that $x$ has in $G$, i.e.\ $d_G(x) := |N_G(x)|$. 

A vertex $v$ of a graph $G$ is {\em universal} in $G$ if $N_G[v] = V(G)$, i.e.\ if $v$ is adjacent to all other vertices of the graph $G$. 

Distinct vertices $x,y$ of a graph $G$ are {\em twins} in $G$ if $N_G[x] = N_G[y]$. (Note that twins are, in particular, adjacent to each other.) 

For a graph $G$ and a set $X \subseteq V(G)$, the {\em open neighborhood} of $X$ in $G$, denoted by $N_G(X)$, is the set of all vertices in $V(G) \setminus X$ that have a neighbor in $X$, and the {\em closed neighborhood} of $X$ in $G$ is the set $N_G[X] := X \cup N_G(X)$. 

For a graph $G$ and a nonempty set $X \subseteq V(G)$, we denote by $G[X]$ the subgraph of $G$ induced by $X$; for vertices $x_1,\dots,x_t \in V(G)$, we sometimes write $G[x_1,\dots,x_t]$ instead of $G[\{x_1,\dots,x_t\}]$. For a set $X \subsetneqq V(G)$, we set $G \setminus X := G[V(G) \setminus X]$. If $G$ has at least two vertices and $x \in V(G)$, we sometimes write $G \setminus x$ instead of $G \setminus \{x\}$.\footnote{Since our graphs are nonnull, if $x$ is the only vertex of a graph $G$, then $G \setminus x$ is not defined.} 

Given a graph $G$, a vertex $x \in V(G)$, and a set $Y \subseteq V(G) \setminus \{x\}$, we say that $x$ is {\em complete} (resp.\ {\em anticomplete}) to $Y$ in $G$ provided that $x$ is adjacent (resp.\ nonadjacent) to all vertices of $Y$ in $G$, and we say that $x$ is {\em mixed} on $Y$ in $G$ if $x$ is neither complete nor anticomplete to $Y$ in $G$ (i.e.\ if $x$ has both a neighbor and a nonneighbor in $Y$). 

Given a graph $G$ and disjoint sets $X,Y \subseteq V(G)$, we say that $X$ is {\em complete} (resp.\ {\em anticomplete}) to $Y$ in $G$ if every vertex in $X$ is complete (resp.\ anticomplete) to $Y$ in $G$. 

When we say, for graphs $G$ and $Q$, that ``$G$ can be obtained from $Q$ by possibly adding universal vertices to it,'' we mean that $Q$ is an induced subgraph of $G$, and $V(G) \setminus V(Q)$ is a (possibly empty) clique of $G$, complete to $V(Q)$ in $G$. Note that in this case, every vertex in $V(G) \setminus V(Q)$ is a universal vertex of $G$. However, it is possible that $G = Q$, in which case $G$ contains no universal vertices unless $Q$ does. 

A {\em thickening} of a graph $G$ is a graph $G^*$ for which there exists a family $\{X_v\}_{v \in V(G)}$ of pairwise disjoint, nonempty sets such that all the following hold: 
\begin{itemize} 
\item $V(G^*) = \bigcup_{v \in V(G)} X_v$; 
\item for all $v \in V(G)$, $X_v$ is a clique of $G^*$; 
\item for all distinct $u,v \in V(G)$, the following hold: 
\begin{itemize} 
\item if $u,v$ are adjacent in $G$, then $X_u$ and $X_v$ are complete to each other in $G^*$; 
\item if $u,v$ are nonadjacent in $G$, then $X_u$ and $X_v$ are anticomplete to each other in $G^*$. 
\end{itemize} 
\end{itemize} 

When we write, for a graph $G$, that ``$x_0,\dots,x_t$ is an induced path in $G$'' ($t \geq 0$), we mean that $x_0,\dots,x_t$ are pairwise distinct vertices, and that the edges of $G[x_0,\dots,x_t]$ are precisely $x_0x_1,x_1x_2,\dots,x_{t-1}x_t$; the {\em endpoints} of this path are $x_0$ and $x_t$ (note that if $t = 0$, then our path has only only one endpoint), the {\em interior vertices} of the path are all its vertices other than the endpoints, and furthermore, we say that our path is {\em between} its endpoints $x_0$ and $x_t$. The {\em length} of a path is the number of edges that it contains; we may also refer to a path of length $k$ as a ``$k$-edge path'' or as a ``$(k+1)$-vertex path.'' A path is {\em trivial} if it is of length zero (i.e.\ if it has only one vertex and no edges), and it is {\em nontrivial} if it is of length at least one. 

For an integer $k \geq 4$, a {\em $k$-hole} in a graph $G$ is an induced $C_k$ in $G$. When we write that ``$x_0,x_1,\dots,x_{k-1},x_0$ is a $k$-hole in $G$'' ($k \geq 4$), we mean that $x_0,x_1,\dots,x_{k-1}$ are pairwise distinct vertices of $G$, and that the edges of $G[x_0,x_1,\dots,x_{k-1}]$ are precisely the following: $x_0x_1,x_1x_2,\dots,x_{k-2}x_{k-1},x_{k-1}x_0$. 

The {\em complement} of a graph $G$, denoted by $\overline{G}$, is the graph whose vertex set is $V(G)$ and in which two distinct vertices are adjacent if and only if they are nonadjacent in $G$. A graph is {\em anticonnected} if its complement is connected. Note that every anticonnected graph on at least two vertices contains a pair of nonadjacent vertices. 

As usual, a {\em connected component} (or simply {\em component}) of a graph $G$ is a maximal connected induced subgraph of $G$. An {\em anticomponent} of a graph $G$ is a maximal anticonnected induced subgraph of $G$; clearly, $Q$ is an anticomponent of $G$ if and only if $\overline{Q}$ is a component of $\overline{G}$. An anticomponent is {\em trivial} if it has only one vertex, and it is {\em nontrivial} if it has at least two vertices. Clearly, the vertex sets of the anticomponents of a graph $G$ are complete to each other in $G$. Moreover, note that the unique vertex of any trivial anticomponent of a graph is a universal vertex of that graph. 

A {\em cutset} of a graph $G$ is a (possibly empty) set $C \subsetneqq V(G)$ such that $G \setminus C$ is disconnected. A {\em clique-cutset} of a graph $G$ is a cutset of $G$ that is also a clique of $G$. (In particular, if $G$ is disconnected, then $\emptyset$ is a clique-cutset of $G$.) Note that if $v$ is a simplicial vertex of a graph $G$, then either $G$ is complete, or $N_G(v)$ is a clique-cutset of $G$.

\subsection{A few simple results} \label{subsec:SimpleResults}

\begin{proposition} \label{prop-non-trivial-anticomp-univ-vertices} Let $G$ and $Q$ be graphs. Assume that $Q$ is anticonnected and contains at least two vertices. Then the following are equivalent: 
\begin{itemize} 
\item $G$ contains exactly one nontrivial anticomponent, and that anticomponent is $Q$; 
\item $G$ can be obtained from $Q$ by possibly adding universal vertices to it. 
\end{itemize} 
\end{proposition}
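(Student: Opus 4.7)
The plan is to prove both implications directly via the complement, exploiting the characterization of trivial anticomponents already noted in the preliminaries, namely that $\{v\}$ is a trivial anticomponent of $G$ if and only if $v$ is a universal vertex of $G$ (equivalently, $v$ is isolated in $\overline{G}$).

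For the forward direction, suppose that $Q$ is the unique nontrivial anticomponent of $G$. Let $U := V(G) \setminus V(Q)$. Since the anticomponents partition the vertex set, every vertex of $U$ lies in some trivial anticomponent, and hence is universal in $G$. This immediately gives that $U$ is complete to $V(Q)$ and that $U$ is a clique (since every two universal vertices of $G$ are adjacent), so $G$ is obtained from $Q$ by adding universal vertices.

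For the backward direction, assume that $G$ is obtained from $Q$ by possibly adding universal vertices, and let $U := V(G) \setminus V(Q)$, which is a clique of $G$ complete to $V(Q)$. Then each $u \in U$ is universal in $G$, hence isolated in $\overline{G}$, so each $\{u\}$ is a trivial anticomponent of $G$. It remains to verify that $Q$ itself is an anticomponent; equivalently, that $V(Q)$ is the vertex set of a connected component of $\overline{G}$. Since $Q$ is anticonnected, $\overline{G}[V(Q)] = \overline{Q}$ is connected, and since every vertex of $U$ is isolated in $\overline{G}$, no edge of $\overline{G}$ joins $V(Q)$ to $U$. Thus $V(Q)$ is a whole connected component of $\overline{G}$, so $Q$ is an anticomponent of $G$. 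Because $|V(Q)| \geq 2$, this anticomponent is nontrivial, and by the previous sentence it is the only nontrivial one.

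There is no real obstacle here: everything follows from translating the definitions through the complement and from the already-noted equivalence between trivial anticomponents and universal vertices. The only mildly delicate point is to observe in the backward direction that $\overline{Q}$ is not merely contained in some component of $\overline{G}$ but is in fact exactly a component, which requires both the connectedness of $\overline{Q}$ and the absence of $\overline{G}$-edges between $V(Q)$ and $U$; both are immediate from the hypotheses.
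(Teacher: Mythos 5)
Your proof is correct, and it is simply a fleshed-out version of what the paper does: the paper's proof is the one-line assertion that the statement follows immediately from the definitions, and your complement-based argument (together with the already-noted fact that trivial anticomponents correspond to universal vertices) is exactly the routine verification being alluded to.
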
 
\begin{proof} 
This follows immediately from the appropriate definitions. 
\end{proof}

\begin{proposition} \label{prop-one-nontrivial-anticomp-simplicial} Assume that a graph $G$ contains exactly one nontrivial anticomponent, call it $Q$. Then $G$ contains a simplicial vertex if and only if $Q$ contains a simplicial vertex. 
\end{proposition}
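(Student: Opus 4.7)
The plan is to use Proposition~\ref{prop-non-trivial-anticomp-univ-vertices} to rewrite the hypothesis in a more concrete form, and then verify both directions by direct inspection of neighborhoods.

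First, I would invoke Proposition~\ref{prop-non-trivial-anticomp-univ-vertices} to conclude that $G$ can be obtained from $Q$ by possibly adding universal vertices; equivalently, $V(G) = V(Q) \cup U$, where $U$ is a (possibly empty) clique of $G$ each of whose vertices is universal in $G$. In particular, for every $v \in V(Q)$, we have $N_G(v) = N_Q(v) \cup U$, and since each vertex of $U$ is adjacent to every vertex of $V(Q)$ and $U$ is itself a clique, the set $N_G(v)$ is a clique in $G$ if and only if $N_Q(v)$ is a clique in $G$, which happens if and only if $N_Q(v)$ is a clique in $Q$.

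For the forward implication, the contrapositive is immediate from the observation above: if $v \in V(Q)$ is simplicial in $Q$, then $N_Q(v)$ is a clique of $Q$, so $N_G(v) = N_Q(v) \cup U$ is a clique of $G$, and hence $v$ is simplicial in $G$.

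For the backward implication, suppose that $v$ is a simplicial vertex of $G$. I would rule out $v \in U$ first: because $Q$ is anticonnected and has at least two vertices, its complement is connected on at least two vertices and therefore contains an edge, i.e., $Q$ contains a pair of nonadjacent vertices; hence $V(G) \setminus \{v\} \supseteq V(Q)$ (if $v \in U$, then $V(G) \setminus \{v\}$ contains all of $V(Q)$) is not a clique, contradicting that the universal vertex $v$ has all other vertices as neighbors and that $N_G(v)$ is a clique. So $v \in V(Q)$, and then by the equivalence in the first paragraph, $N_Q(v)$ is a clique of $Q$, so $v$ is simplicial in $Q$.

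There is no genuine obstacle here; the content of the proposition is that adjoining universal vertices to a graph neither creates nor destroys simplicial vertices among the original vertices, and cannot introduce new simplicial vertices among the added ones (as long as the original graph has a nonedge). The only point that requires even a moment's care is excluding $v \in U$, which uses the hypothesis that $Q$ is anticonnected on at least two vertices.
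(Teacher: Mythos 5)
Your proof is correct and follows essentially the same route as the paper: both identify $V(G)\setminus V(Q)$ as a clique of universal (i.e.\ complete-to-$V(Q)$) vertices, observe that for $v \in V(Q)$ the neighborhood $N_G(v)$ is a clique iff $N_Q(v)$ is, and rule out simplicial vertices outside $V(Q)$ using a nonedge of $Q$. The only quibble is the label ``contrapositive'' for what is in fact a direct proof of the implication ``$Q$ has a simplicial vertex $\Rightarrow$ $G$ has a simplicial vertex''; the argument itself is fine.
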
 
\begin{proof} 
Set $W := V(G) \setminus V(Q)$. Since $Q$ is the only nontrivial anticomponent of $G$, we see that $W$ is a (possibly empty) clique of $G$, complete to $V(Q)$ in $G$. 

Suppose first that $Q$ contains a simplicial vertex, call it $v$. Then $N_G(v) = N_Q(v) \cup W$. Since $v$ is simplicial, we know that $N_Q(v)$ is a clique of $Q$, and therefore of $G$ as well. Since $W$ is a clique, complete to $V(Q)$ in $G$, we deduce that $N_G(v)$ is a clique of $G$, and consequently, $v$ is simplicial in $G$. 

Suppose conversely that $G$ contains a simplicial vertex, call it $v$. Since every vertex of $W$ is complete to $V(Q)$, and since $Q$ contains a pair of distinct, nonadjacent vertices (because $Q$ is a nontrivial anticomponent of $G$), we see that no vertex of $W$ is simplicial in $G$. Therefore, $v \notin W$, and we deduce that $v \in V(Q)$. But clearly, $N_Q(v) = N_G(v) \setminus W$; since $N_G(v)$ is a clique of $G$, it is clear that $N_Q(v)$ is a clique of $Q$, and we conclude that $v$ is simplicial in $Q$. 
\end{proof} 

\begin{proposition} \label{prop-H-free-no-universal} Assume that a graph $G$ contains exactly one nontrivial anticomponent, call it $Q$. Let $H$ be a graph that contains no universal vertices. Then $G$ is $H$-free if and only if $Q$ is $H$-free. 
\end{proposition}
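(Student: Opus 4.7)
The plan is to deduce the proposition from Proposition~\ref{prop-non-trivial-anticomp-univ-vertices}, which tells us that $G$ can be obtained from $Q$ by possibly adding universal vertices to it. Setting $W := V(G) \setminus V(Q)$, this means $W$ is a (possibly empty) clique, complete to $V(Q)$ in $G$, and moreover every vertex in $W$ is universal in $G$. Since $Q$ is an induced subgraph of $G$, the forward direction is immediate: if $G$ is $H$-free, then $Q$ is $H$-free as well.

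For the converse direction, I would argue by contrapositive. Suppose $G$ is not $H$-free, and fix a set $S \subseteq V(G)$ with $G[S] \cong H$. The goal is to show $S \subseteq V(Q)$, which immediately gives that $Q$ is not $H$-free. To see this, observe that every vertex $w \in W$ is universal in $G$, hence universal in any induced subgraph of $G$ that contains it; in particular, any vertex of $S \cap W$ would be a universal vertex of $G[S]$. Since $G[S] \cong H$ and $H$ has no universal vertex, $G[S]$ has no universal vertex either, so $S \cap W = \emptyset$, i.e.\ $S \subseteq V(Q)$, as required.

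There is no real obstacle here; the only thing to be careful about is applying Proposition~\ref{prop-non-trivial-anticomp-univ-vertices} correctly to justify the structural description of $G$ in terms of $Q$ and a set of universal vertices, and then invoking the hypothesis on $H$ precisely at the moment one wishes to rule out $W$ meeting the vertex set of a potential induced copy of $H$.
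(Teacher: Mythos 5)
Your proof is correct and follows essentially the same route as the paper's: the forward direction is immediate since $Q$ is an induced subgraph of $G$, and for the converse both arguments observe that any vertex of $V(G)\setminus V(Q)$ lying in an induced copy of $H$ would be a universal vertex of that copy, contradicting the hypothesis on $H$. The only cosmetic difference is that you route the structural fact about $V(G)\setminus V(Q)$ through Proposition~\ref{prop-non-trivial-anticomp-univ-vertices}, while the paper derives it directly from the definition of anticomponents.
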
 
\begin{proof} 
If $G$ is $H$-free, then it is clear that $Q$ is also $H$-free. Suppose now that $G$ is not $H$-free; we must show that $Q$ is not $H$-free either. Fix a nonempty set $X \subseteq V(G)$ such that $G[X]$ is isomorphic to $H$. If $X \subseteq V(Q)$, then $Q[X] = G[X]$, and we deduce that $Q$ is not $H$-free. We may now assume that $X \not\subseteq V(Q)$. Fix any $x \in X \setminus V(Q)$. Since $Q$ is the only nontrivial anticomponent of $G$, we know that $V(G) \setminus V(Q)$ is a clique, complete to $V(Q)$ in $G$, and we deduce that $x$ is a universal vertex of $G$. But then $x$ is also a universal vertex of $G[X]$, which is impossible since $G[X]$ is isomorphic to $H$, and $H$ contains no universal vertices. 
\end{proof} 

\begin{proposition} \label{prop-non-adj-comp-clique} Let $G$ be a $C_4$-free graph, let $S \subseteq V(G)$, and let $v_1,v_2 \in V(G) \setminus S$ be distinct, nonadjacent vertices, complete to $S$ in $G$. Then $S$ is a clique. 
\end{proposition}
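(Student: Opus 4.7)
The plan is to argue by contradiction: assume $S$ is not a clique, and produce an induced $C_4$ in $G$, contradicting the hypothesis that $G$ is $C_4$-free.

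More concretely, if $S$ were not a clique, I could pick two distinct nonadjacent vertices $s_1,s_2 \in S$. By hypothesis, both $v_1$ and $v_2$ are complete to $S$, hence each of $v_1,v_2$ is adjacent to both $s_1$ and $s_2$. Together with the given nonadjacency $v_1v_2 \notin E(G)$ and the chosen nonadjacency $s_1s_2 \notin E(G)$, the four vertices $v_1,s_1,v_2,s_2$ would induce a $4$-cycle $v_1\text{-}s_1\text{-}v_2\text{-}s_2\text{-}v_1$ in $G$, i.e.\ an induced $C_4$. This contradicts $G$ being $C_4$-free, so $S$ must in fact be a clique.

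There is no real obstacle here; the only thing one needs to be a little careful about is that the four vertices $v_1,v_2,s_1,s_2$ are pairwise distinct (which is immediate, since $v_1 \neq v_2$ by hypothesis, $s_1 \neq s_2$ by choice, and $v_1,v_2 \notin S$ while $s_1,s_2 \in S$), and that the claimed four-cycle is induced (which is exactly the content of the two nonadjacencies $v_1v_2 \notin E(G)$ and $s_1s_2 \notin E(G)$). The proof will therefore be a one-paragraph argument.
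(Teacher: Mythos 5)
Your proof is correct and is essentially identical to the paper's: both argue by contradiction, picking nonadjacent $s_1,s_2 \in S$ and observing that $v_1,s_1,v_2,s_2,v_1$ is a 4-hole, contradicting $C_4$-freeness. Your extra remarks on distinctness and inducedness are fine but not needed beyond what the paper already implicitly uses.
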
 
\begin{proof} 
Suppose otherwise, and fix distinct, nonadjacent vertices $s_1,s_2 \in S$. Then $v_1,s_1,v_2,s_2,v_1$ is a 4-hole in $G$, contrary to the fact that $G$ is $C_4$-free. 
\end{proof} 

\begin{proposition} \label{prop-P3-free} A graph is $P_3$-free if and only if all its components are complete graphs. 
\end{proposition}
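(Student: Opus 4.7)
The plan is to prove both directions by essentially unpacking the definitions. Let $G$ be a graph.

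For the ``if'' direction, suppose every component of $G$ is a complete graph. Since $P_3$ is connected, any induced copy of $P_3$ in $G$ would have to lie inside a single component, but then the three vertices would form a triangle (since the component is complete), contradicting the fact that $P_3$ has a pair of nonadjacent vertices.

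For the ``only if'' direction, I would argue contrapositively: assume some component $C$ of $G$ is not a complete graph, and produce an induced $P_3$. Since $C$ is not complete, pick a pair of distinct, nonadjacent vertices $u,v \in V(C)$; since $C$ is connected, there is a path in $C$ from $u$ to $v$. Choose such vertices $u,v$ so that the distance $d_C(u,v)$ is as small as possible (necessarily at least $2$). Let $u = x_0, x_1, x_2, \dots, x_k = v$ be a shortest path in $C$ from $u$ to $v$. Then $x_0, x_1, x_2$ are pairwise distinct, $x_0x_1$ and $x_1x_2$ are edges, and $x_0x_2$ is a non-edge, because otherwise $u,v$ could be joined by a shorter path via $x_0 x_2 \cdots x_k$. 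Hence $G[x_0,x_1,x_2]$ is an induced $P_3$.

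Neither direction poses any real obstacle; this is a standard characterization that follows directly from taking a shortest path between two nonadjacent vertices in a component. The only point to be slightly careful about is justifying that the first three vertices of a shortest path form an induced $P_3$ (i.e., that $x_0 x_2$ cannot be an edge), but this is immediate from the minimality of the chosen path.
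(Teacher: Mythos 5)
Your proof is correct and follows essentially the same route as the paper: the paper also handles the forward direction as immediate and, for the converse, takes two nonadjacent vertices in a non-complete component, connects them by an induced path, and extracts a $P_3$ from three consecutive vertices. Your use of a shortest path (with the minimality argument showing $x_0x_2$ is a non-edge) is just a minor variant of the paper's use of an induced path.
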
 
\begin{proof} 
Fix a graph $G$. If all components of $G$ are complete graphs, then it is clear that $G$ is $P_3$-free. Suppose, conversely, that $G$ is $P_3$-free, and let $C$ be any component of $G$; we must show that $C$ is a complete graph. Suppose otherwise, and fix distinct, nonadjacent vertices $c,c'$ of the component $C$. Since $C$ is connected, we see that it contains an induced path $P$ between $c$ and $c'$. Since $c,c'$ are nonadjacent, the path $P$ contains at least three vertices. But now any three consecutive vertices of $P$ induce a $P_3$ in $G$, a contradiction. 
\end{proof} 

\begin{proposition} \label{prop-C4Free-CoBip} Let $G$ be a $C_4$-free graph, and let $X$ and $Y$ be disjoint cliques in $G$. Then all the following hold: 
\begin{enumerate}[(a)] 
\item for all $x,x' \in X$, one of $N_G(x) \cap Y$ and $N_G(x') \cap Y$ is a subset of the other; 
\item $X$ can be ordered as $X = \{x_1,\dots,x_{|X|}\}$ so that $N_G(x_{|X|}) \cap Y \subseteq \dots \subseteq N_G(x_1) \cap Y$; 
\item for all $y,y' \in Y$, one of $N_G(y) \cap X$ and $N_G(y') \cap X$ is a subset of the other; 
\item $Y$ can be ordered as $Y = \{y_1,\dots,y_{|Y|}\}$ so that $N_G(y_{|Y|}) \cap X \subseteq \dots \subseteq N_G(y_1) \cap X$. 
\end{enumerate} 
\end{proposition}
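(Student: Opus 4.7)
The plan is to prove (a) directly by extracting an induced $C_4$ from a failure of nesting, and then to obtain (b) as an immediate sorting consequence of (a); parts (c) and (d) follow by the symmetric argument (interchanging the roles of $X$ and $Y$).

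For part (a), I would argue by contradiction. Suppose $x,x' \in X$ are such that neither of $N_G(x) \cap Y$ and $N_G(x') \cap Y$ is contained in the other. Then I can pick $y \in (N_G(x) \cap Y) \setminus N_G(x')$ and $y' \in (N_G(x') \cap Y) \setminus N_G(x)$. Note that $x \neq x'$ (since their $Y$-neighborhoods differ), and $y \neq y'$ (for the same reason). Since $X$ is a clique, $xx' \in E(G)$, and since $Y$ is a clique, $yy' \in E(G)$. Also $xy, x'y' \in E(G)$ by the choice of $y,y'$, while $xy', x'y \notin E(G)$, again by the choice of $y,y'$. Therefore $G[x,y,y',x']$ has exactly the edges $xy, yy', y'x', x'x$, so $x,y,y',x',x$ is a 4-hole in $G$, contradicting the fact that $G$ is $C_4$-free.

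For part (b), by part (a), the family $\{\,N_G(x) \cap Y : x \in X\,\}$ is totally ordered by inclusion. So I can enumerate $X = \{x_1,\dots,x_{|X|}\}$ in any order that is nonincreasing with respect to $\subseteq$ on the associated sets $N_G(x_i) \cap Y$; this gives the chain $N_G(x_{|X|}) \cap Y \subseteq \dots \subseteq N_G(x_1) \cap Y$. Parts (c) and (d) are obtained by applying (a) and (b) verbatim after swapping the roles of $X$ and $Y$. I do not foresee any real obstacle here; the only point requiring a moment's care is verifying that the four vertices $x,x',y,y'$ chosen in the contradiction step are genuinely distinct and that the induced subgraph has precisely the four edges claimed, which is immediate from $X,Y$ being disjoint cliques together with the defining properties of $y$ and $y'$.
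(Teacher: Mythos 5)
Your proof is correct and follows essentially the same approach as the paper: reduce everything to part (a), and derive a contradiction from a failure of nesting by exhibiting the $4$-hole $x,y,y',x',x$ (the paper writes the same hole as $x,x',y',y,x$). The extra care you take in checking distinctness of the four vertices and the exact edge set is fine but not needed beyond what the paper states.
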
 
\begin{proof} 
By symmetry, it suffices to prove~(a) and~(b). Moreover, it is clear that~(a) immediately implies~(b), and so it is enough to prove~(a). Suppose that~(a) is false, and fix $x,x' \in X$ such that neither one of $N_G(x) \cap Y$ and $N_G(x') \cap Y$ is a subset of the other. Then there exist $y,y' \in Y$ such that $y$ is adjacent to $x$ and nonadjacent to $x'$, and such that $y'$ is adjacent to $x'$ and nonadjacent to $x$. But then $x,x',y',y,x$ is a 4-hole in $G$, contrary to the fact that $G$ is $C_4$-free. 
\end{proof}


\begin{proposition} \label{prop-2P3C4-free-clique-cut-simplicial} Let $G$ be a $(2P_3,C_4)$-free graph that admits a clique-cutset. Then $G$ contains a simplicial vertex. 
\end{proposition}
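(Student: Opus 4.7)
The plan is to fix a clique-cutset $C$ of $G$, use the $2P_3$-free hypothesis to locate a component of $G \setminus C$ that is an entire clique, and then use the $C_4$-free hypothesis (via Proposition~\ref{prop-C4Free-CoBip}) to pick a vertex in that clique whose neighborhood into $C$ is minimal; that vertex will be simplicial.

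In more detail, let $C$ be a clique-cutset of $G$, so that $G \setminus C$ has at least two connected components, which are pairwise anticomplete in $G$. If two distinct components of $G \setminus C$ each contained an induced $P_3$, those two $P_3$'s together would form an induced $2P_3$ in $G$, contradicting the hypothesis. Hence at least one component $A$ of $G \setminus C$ is $P_3$-free, and since $A$ is connected, Proposition~\ref{prop-P3-free} forces $G[A]$ to be a clique.

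Now $A$ and $C$ are disjoint cliques in the $C_4$-free graph $G$, so Proposition~\ref{prop-C4Free-CoBip} lets me enumerate $A = \{a_1,\dots,a_k\}$ in such a way that $N_G(a_k) \cap C \subseteq \dots \subseteq N_G(a_1) \cap C$. I set $v := a_k$. Because $A$ is a component of $G \setminus C$, every neighbor of $v$ outside $A$ lies in $C$; since $A$ is a clique, this gives $N_G(v) = (A \setminus \{v\}) \cup (N_G(v) \cap C)$. The set $A \setminus \{v\}$ is a clique and $N_G(v) \cap C \subseteq C$ is a clique, so it only remains to check that these two sets are complete to each other. For any $a_i \in A \setminus \{v\}$ (so $i < k$) and any $c \in N_G(v) \cap C$, the chosen ordering gives $c \in N_G(v) \cap C \subseteq N_G(a_i) \cap C$, so $c$ is adjacent to $a_i$. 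Thus $N_G(v)$ is a clique and $v$ is a simplicial vertex of $G$.

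There is no real obstacle here: the only subtle point is ensuring that the simplicial vertex lives in $A$ (and not in $C$, where it could have many uncontrolled neighbors in other components), which is exactly what choosing the last vertex $a_k$ in the nested ordering provided by Proposition~\ref{prop-C4Free-CoBip} accomplishes.
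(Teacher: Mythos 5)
Your proof is correct and follows essentially the same route as the paper: use $2P_3$-freeness together with Proposition~\ref{prop-P3-free} to find a component of $G \setminus C$ that is a clique, then apply Proposition~\ref{prop-C4Free-CoBip} to that clique and $C$ and take the last vertex in the nested ordering as the simplicial vertex. No gaps.
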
 
\begin{proof} 
Fix a clique-cutset $C$ of $G$, and let $A$ and $B$ be the vertex sets of two distinct components of $G \setminus C$. Then at least one of $A,B$ is a clique, for otherwise, Proposition~\ref{prop-P3-free} guarantees that $G[A]$ and $G[B]$ both contain an induced $P_3$, and consequently (since $A$ and $B$ are anticomplete to each other), $G$ contains an induced $2P_3$, a contradiction. By symmetry, we may assume that $A$ is a clique. Set $t := |A|$; since $A \neq \emptyset$, we know that $t \geq 1$. We now apply Proposition~\ref{prop-C4Free-CoBip}(b) to the cliques $A$ and $C$, and we fix an ordering $A = \{a_1,\dots,a_t\}$ of $A$ such that $N_G(a_t) \cap C \subseteq \dots \subseteq N_G(a_1) \cap C$. Since $A$ is a clique, and since $N_G(A) \subseteq C$, we see that $N_G[a_t] = A \cup \big(N_G(a_t) \cap C\big)$. But now $N_G[a_t]$ is  a clique: indeed, $A$ and $C$ are both cliques, and our ordering of $A$ guarantees that $N_G(a_t) \cap C$ is complete to $A$. So, $a_t$ is simplicial in $G$. 
\end{proof}

\begin{proposition} \label{prop-G-x-disjoint-cliques-2P3-free} Let $G$ be a graph, and let $X \subsetneqq V(G)$. Assume that for all $x \in X$, the set $V(G) \setminus N_G[x]$ can be partitioned into cliques of $G$, pairwise anticomplete to each other.\footnote{It is possible that $V(G) \setminus N_G[x] = \emptyset$, i.e.\ that $N_G[x] = V(G)$.} Then $G$ is $2P_3$-free if and only if $G \setminus X$ is $2P_3$-free. 
\end{proposition}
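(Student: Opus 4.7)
The plan is to prove both directions. The forward direction ($G$ being $2P_3$-free implies $G \setminus X$ is $2P_3$-free) is immediate because $G \setminus X$ is an induced subgraph of $G$. So the only real work is to show the converse.

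For the converse, I would proceed by contrapositive: assume $G$ contains an induced $2P_3$, say on vertices forming two paths $P = p_1 p_2 p_3$ and $Q = q_1 q_2 q_3$ with no edges between $V(P)$ and $V(Q)$, and show that $G \setminus X$ also contains an induced $2P_3$. It suffices to argue that no vertex of this particular $2P_3$ lies in $X$, since then the entire $2P_3$ already sits inside $G \setminus X$.

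Suppose for contradiction that some vertex of the $2P_3$ belongs to $X$; by symmetry between $P$ and $Q$, we may assume it is a vertex $x \in V(P) \cap X$. Since $V(P)$ and $V(Q)$ are anticomplete in $G$, we have $\{q_1, q_2, q_3\} \subseteq V(G) \setminus N_G[x]$. By hypothesis, $V(G) \setminus N_G[x]$ admits a partition into cliques that are pairwise anticomplete to each other. Now $q_1 q_3$ is a nonedge of $G$, so $q_1$ and $q_3$ must lie in distinct parts of this partition, and therefore they are anticomplete in $G$. On the other hand, $q_2$ is adjacent to both $q_1$ and $q_3$; since parts are pairwise anticomplete, $q_2$ must share a part with each of $q_1$ and $q_3$, forcing $q_1$ and $q_3$ to share a part as well — contradicting that they are nonadjacent and in distinct parts.

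This contradiction shows that no vertex of the induced $2P_3$ lies in $X$, so $G \setminus X$ contains an induced $2P_3$, completing the contrapositive. There is no real obstacle in this argument; the only subtle point is recognizing that the structure of a $P_3$ (one vertex adjacent to two mutually nonadjacent vertices) is incompatible with being embedded into a union of pairwise anticomplete cliques, which is where the partition hypothesis is used.
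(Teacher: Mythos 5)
Your proposal is correct and follows essentially the same route as the paper: both arguments observe that if a vertex $x$ of one of the two $P_3$'s lay in $X$, the other $P_3$ would be contained in $V(G) \setminus N_G[x]$, which is a disjoint union of pairwise anticomplete cliques and hence cannot contain an induced $P_3$. The only cosmetic difference is that the paper delegates this last step to its Proposition~\ref{prop-P3-free} (a graph is $P_3$-free iff all its components are complete), whereas you verify the special case directly.
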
 
\begin{proof} 
If $G$ is $2P_3$-free, then so is its induced subgraph $G \setminus X$. We now assume that $G$ is not $2P_3$-free, and we show that $G \setminus X$ is not $2P_3$-free either. Fix pairwise distinct vertices $a_0,a_1,a_2,b_0,b_1,b_2 \in V(G)$ such that $a_0,a_1,a_2$ and $b_0,b_1,b_2$ are both induced paths in $G$, and such that $\{a_0,a_1,a_2\}$ and $\{b_0,b_1,b_2\}$ are anticomplete to each other in $G$. Clearly, it is enough to show that $X \cap \{a_0,a_1,a_2,b_0,b_1,b_2\} = \emptyset$. Suppose otherwise. By symmetry, we may assume that there exists an index $i \in \{0,1,2\}$ such that $a_i \in X$. So, by hypothesis and by Proposition~\ref{prop-P3-free}, we have that $G \setminus N_G[a_i]$ is $P_3$-free. But on the other hand, since $\{a_1,a_2,a_3\}$ and $\{b_1,b_2,b_3\}$ are anticomplete to each other, we see that $b_1,b_2,b_3 \in V(G) \setminus N_G[a_i]$, and consequently, $G \setminus N_G[a_i]$ contains an induced $P_3$, a contradiction. 
\end{proof} 

\begin{proposition} \label{prop-add-complete-components} Let $G$ be a graph, and let $A \subsetneqq V(G)$. Assume that $A$ is either empty or can be partitioned into nonempty cliques, pairwise anticomplete to each other. Assume furthermore that $A$ is anticomplete to $V(G) \setminus A$. Then the following hold: 
\begin{enumerate}[(a)] 
\item for any graph $H$ such that no component of $H$ is a complete graph, we have that $G$ is $H$-free if and only if $G \setminus A$ is $H$-free; 
\item $G$ is $2P_3$-free if and only if $G \setminus A$ is $2P_3$-free.  
\end{enumerate} 
\end{proposition}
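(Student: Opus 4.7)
The plan is to prove~(a) directly by examining where a hypothetical induced copy of $H$ could live, and then derive~(b) as an immediate corollary of~(a) applied with $H = 2P_3$.

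For~(a), the forward implication is trivial since $G \setminus A$ is an induced subgraph of $G$. For the converse, suppose $G$ contains an induced copy of $H$ on a vertex set $X$; I want to show that $X \cap A = \emptyset$, which will force the copy of $H$ to live in $G \setminus A$. Suppose for a contradiction that some $x \in X$ lies in $A$. Fix the clique $K$ of the partition of $A$ that contains $x$. By hypothesis, $K$ is anticomplete to $A \setminus K$ (the other cliques in the partition) and also anticomplete to $V(G) \setminus A$; hence $K$ is anticomplete to $V(G) \setminus K$. Therefore the connected component of $G[X]$ that contains $x$ is entirely contained in $X \cap K$, and since $K$ is a clique, this component is a complete graph. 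But $G[X] \cong H$, so this means $H$ has a component that is a complete graph, contradicting our hypothesis on $H$.

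For~(b), I simply observe that $2P_3$ has two components, each isomorphic to $P_3$, and $P_3$ is not a complete graph. Thus $2P_3$ has no complete components, and~(a) applied with $H = 2P_3$ yields~(b).

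There is no real obstacle here; the only subtlety is remembering that the definition of ``partition'' used in this paper permits the empty partition (covering the case $A = \emptyset$) and that the partition consists of cliques that are not merely disjoint but pairwise anticomplete, which is exactly what is needed to conclude that the component of $G[X]$ through $x$ stays inside a single clique $K$ of the partition.
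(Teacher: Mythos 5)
Your proof is correct and follows essentially the same route as the paper's: both show that any induced copy of $H$ must avoid $A$ by observing that the component of $G[X]$ through a vertex of $A$ would be confined to a single clique of the partition and hence be a complete graph, and both then obtain~(b) from~(a) since neither component of $2P_3$ is complete. No issues.
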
 
\begin{proof} 
Clearly, (a) implies (b). So, it is enough to prove (a). Fix a graph $H$ such that no component of $H$ is a complete graph. If $G$ is $H$-free, then $G \setminus A$ is obviously $H$-free as well. Now, suppose that $G$ is not $H$-free; we must show that $G \setminus A$ is not $H$-free either. Fix $X \subseteq V(G)$ such that $G[X]$ is isomorphic to $H$. It now suffices to show that $X \cap A = \emptyset$, for this will immediately imply that $G \setminus A$ is not $H$-free, which is what we need to show. Suppose otherwise, that is, suppose that $X \cap A \neq \emptyset$. This, in particular, implies that $A \neq \emptyset$, and so by hypothesis, $A$ can be partitioned into nonempty cliques $A_1,\dots,A_{\ell}$, pairwise anticomplete to each other, and anticomplete to $V(G) \setminus A$. By symmetry, we may assume that $X \cap A_1 \neq \emptyset$. Since $A_1$ is a clique, anticomplete to $V(G) \setminus A_1$, we deduce that $G[X \cap A_1]$ is a complete graph that is also a component of $G[X]$. But this is impossible since $G[X]$ is isomorphic to $H$, and no component of $H$ is a complete graph. 
\end{proof} 

\begin{proposition} \label{prop-thickening} Let $G^*$ be a thickening of a graph $G$. Then all the following hold: 
\begin{enumerate}[(a)] 
\item \label{ref-prop-thickening-simplicial} $G^*$ contains a simplicial vertex if and only if $G$ contains a simplicial vertex; 
\item \label{ref-prop-thickening-H-universal} $G^*$ contains a universal vertex if and only if $G$ contains a universal vertex; 
\item \label{ref-prop-thickening-H-anticonn} if $G$ has at least two vertices, then $G^*$ is anticonnected if and only if $G$ is anticonnected; 
\item \label{ref-prop-thickening-H-free} for any graph $H$ that contains no pair of twins, $G^*$ is $H$-free of and only if $G$ is $H$-free. 
\end{enumerate} 
\end{proposition}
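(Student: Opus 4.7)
The plan is to exploit two structural observations about any thickening $G^*$ of $G$. First, any two vertices $x,x' \in X_v$ are twins in $G^*$: both satisfy $N_{G^*}[x] = N_{G^*}[x'] = X_v \cup \bigcup_{u \in N_G(v)} X_u$. Second, if $Y \subseteq V(G)$ and one picks a single representative $x_v \in X_v$ for each $v \in Y$, then the set $\{x_v : v \in Y\}$ induces in $G^*$ a graph isomorphic to $G[Y]$ via $x_v \mapsto v$, because adjacency between vertices of distinct blocks is governed by the adjacency of the blocks' indices. Essentially every part of the proposition will follow from one of these two observations.

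For (a), I would use the formula $N_{G^*}[x] = X_v \cup \bigcup_{u \in N_G(v)} X_u$ for any $x \in X_v$. This set is a clique of $G^*$ if and only if $N_G(v)$ is a clique of $G$: each block is already a clique, and for distinct $u,u' \in N_G(v)$, $X_u$ is complete to $X_{u'}$ in $G^*$ exactly when $uu' \in E(G)$. For (b), the same formula shows $N_{G^*}[x] = V(G^*)$ if and only if $N_G[v] = V(G)$. Thus in each part a single $x \in X_v$ witnesses the property in $G^*$ precisely when $v$ witnesses it in $G$.

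For (c), I would reason in the complement. In $\overline{G^*}$, each block $X_v$ is a stable set, while two distinct blocks $X_u, X_v$ are complete in $\overline{G^*}$ if $uv \in E(\overline{G})$ and anticomplete otherwise. Assuming $G$ is anticonnected, I would connect two vertices $x \in X_u$ and $y \in X_v$ in $\overline{G^*}$ as follows: if $u \neq v$, lift a path $u = u_0, \dots, u_k = v$ in $\overline{G}$ to a path of chosen representatives in $\overline{G^*}$; if $u = v$, use that $|V(G)| \geq 2$ together with the connectedness of $\overline{G}$ to find a non-neighbor $w$ of $v$ in $G$, and route $x$ and $y$ through any fixed $z \in X_w$. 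For the converse, consecutive vertices along any path in $\overline{G^*}$ must lie in distinct blocks (each block being a stable set of $\overline{G^*}$), so projecting each vertex to its block index gives a walk in $\overline{G}$, establishing that $\overline{G}$ is connected.

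For (d), assume $H$ contains no pair of twins. If $G[Y] \cong H$, the representative construction in the first paragraph immediately produces an induced copy of $H$ in $G^*$. Conversely, if $G^*[X] \cong H$, then $|X \cap X_v| \leq 1$ for every $v \in V(G)$: otherwise two vertices of $X \cap X_v$ would be twins in $G^*$ and therefore in $G^*[X]$, contradicting the hypothesis on $H$. Hence the elements of $X$ project injectively to a set $Y \subseteq V(G)$ of block indices, and the second structural observation yields $G[Y] \cong G^*[X] \cong H$. The only mildly subtle point in the whole proof is the in-block routing in (c), where both $|V(G)| \geq 2$ and the anticonnectedness of $G$ are genuinely used to extract a non-neighbor; all other steps unpack the definition of a thickening directly.
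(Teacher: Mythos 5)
Your proposal is correct. Parts (a), (b), and (d) follow essentially the same route as the paper: the closed-neighborhood formula $N_{G^*}[x] = X_v \cup \bigcup_{u \in N_G(v)} X_u$ for (a)--(b), and for (d) the observation that $|X \cap X_v| \leq 1$ because $H$ has no twins, followed by projecting/lifting representatives. Part (c) is where you genuinely diverge: the paper phrases non-anticonnectedness as the existence of a partition of the vertex set into two nonempty parts complete to each other, transfers such partitions between $G$ and $G^*$, and must separately handle the case where some block $X_v$ straddles both parts (which forces $v$ to be universal, whence the hypothesis $|V(G)| \geq 2$ is invoked); you instead work directly with connectivity of the complement, lifting paths of $\overline{G}$ to paths of representatives in $\overline{G^*}$ and projecting paths of $\overline{G^*}$ back down, using $|V(G)| \geq 2$ only for the in-block routing. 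Both arguments are sound; the paper's avoids path bookkeeping at the cost of the straddling case, while yours makes the use of the two-vertex hypothesis more transparent. No gaps.
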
 
\begin{proof} 
Using the fact that $G^*$ is a thickening of $G$, we fix a family $\{X_v\}_{v \in V(G)}$ of pairwise disjoint, nonempty sets such that all the following hold: 
\begin{itemize} 
\item $V(G^*) = \bigcup_{v \in V(G)} X_v$; 
\item for all $v \in V(G)$, $X_v$ is a clique of $G^*$; 
\item for all distinct $u,v \in V(G)$, the following hold: 
\begin{itemize} 
\item if $u,v$ are adjacent in $G$, then $X_u$ and $X_v$ are complete to each other in $G^*$; 
\item if $u,v$ are nonadjacent in $G$, then $X_u$ and $X_v$ are anticomplete to each other in $G^*$. 
\end{itemize} 
\end{itemize} 

We first prove~(\ref{ref-prop-thickening-simplicial}-\ref{ref-prop-thickening-H-universal}). If $v$ is a simplicial (respectively: universal) vertex of $G$, then clearly, any vertex of $X_v$ is simplicial (respectively: universal) in $G^*$. On the other hand, if a vertex $x$ is simplicial (respectively: universal) of $G^*$, then we fix $v \in V(G)$ such that $x \in X_v$, and we observe that $v$ is a simplicial (respectively: universal) vertex of $G$. This proves~(\ref{ref-prop-thickening-simplicial}-\ref{ref-prop-thickening-H-universal}). 

Next, we prove~(\ref{ref-prop-thickening-H-anticonn}). Assume that $G$ has at least two vertices. We will prove that $G^*$ is not anticonnected if and only if $G$ is not anticonnected. 

Suppose first that $G$ is not anticonnected. Then $V(G)$ can be partitioned into nonempty sets $A$ and $B$ that are complete to each other in $G$. But then $\bigcup_{v \in A} X_v$ and $\bigcup_{v \in B} X_v$ form a partition of $V(G^*)$ into nonempty sets, complete to each other in $G^*$, and we deduce that $G^*$ is not anticonnected. 

Suppose now that $G^*$ is not anticonnected. Then $V(G^*)$ can be partitioned into nonempty sets $A^*$ and $B^*$, complete to each other in $G^*$. If there exists some $v \in V(G)$ such that $X_v$ intersects both $A^*$ and $B^*$, then $v$ is a universal vertex of $G$,\footnote{Indeed, fix any $u \in V(G) \setminus \{v\}$. Then $X_u$ intersects at least one of $A^*$ and $B^*$. By symmetry, we may assume that $X_u \cap A^* \neq \emptyset$. Since $A^*$ is complete to $B^*$ in $G^*$, we know that $X_u \cap A^*$ is complete to $X_v \cap B^*$ in $G^*$. Since $X_u \cap A^*$ and $X_v \cap B^*$ are both nonempty, we see that there is at least one edge between $X_u$ and $X_v$ in $G^*$. So, by definition, $u$ is adjacent to $v$.} which (since $G$ has at least two vertices) implies that $G$ is not anticonnected. So, assume that no such vertex $v$ exists. Then $A := \{v \in V(G) \mid X_v \subseteq A^*\}$ and $B := \{v \in V(G) \mid X_v \subseteq B^*\}$ form a partition of $V(G)$ into nonempty sets, complete to each other in $G$. So, $G$ is not anticonnected. This proves~(\ref{ref-prop-thickening-H-anticonn}). 

It remains to prove~(\ref{ref-prop-thickening-H-free}). Fix a graph $H$ that contains no pair of twins. Clearly, $G$ is (isomorphic to) an induced subgraph of $G^*$, and so if $G^*$ is $H$-free, so is $G$. Now, suppose that $G^*$ is not $H$-free; we must show that $G$ is not $H$-free either. Fix $X \subseteq V(G^*)$ such that $G^*[X]$ is isomorphic to $H$. Note that for all $v \in V(G)$, any two distinct vertices of $X_v$ are twins in $G^*$. Since $H$ contains no pair of twins, we deduce that $|X \cap X_v| \leq 1$ for all $v \in V(G)$. Now, for all $v \in V(G)$ such that $|X \cap X_v| = 1$, let $v^*$ be the unique vertex of $X \cap X_v$. On the other hand, for all $v \in V(G)$ such that $X \cap X_v = \emptyset$, let $v^*$ be an arbitrarily chosen vertex of $X_v$. Set $X^* := \{v^* \mid v \in V(G)\}$. Then $G[X^*]$ is isomorphic to $G$ and contains $G^*[X]$ as an induced subgraph. Since $G^*[X]$ is isomorphic to $H$, we deduce that $G$ is not $H$-free. This proves~(\ref{ref-prop-thickening-H-free}). 
\end{proof}

\section{$\boldsymbol{(P_7,C_4,C_6)}$-free graphs that contain an induced $\boldsymbol{C_7}$} \label{sec:withC7}

In this section, we give a full structural description of $(P_7,C_4,C_6)$-free graphs that contain an induced $C_7$ and do not admit a clique-cutset (see Theorem~\ref{thm-main-withC7}). As mentioned in the introduction, it turns out that these graphs are exactly the same as the $(2P_3,C_4,C_6)$-free graphs that contain an induced $C_7$ and contain no simplicial vertices, which are in turn exactly the same as the $(4K_1,C_4,C_6)$-free graphs that contain an induced $C_7$. We remind the reader that the structure of $(4K_1,C_4,C_6)$-free graphs that contain an induced $C_7$ was originally described in~\cite{ChinhHoang}. 

In this section, we further fully describe the structure of all $(2P_3,C_4,C_6)$-free graphs that contain an induced $C_7$ (see Theorem~\ref{thm-7-saucer}, and note that this theorem also applies to graphs that may possibly contain simplicial vertices).

\subsection{The special partition and the 7-saucer} \label{subsec:SpecPartDef}

A {\em special partition} of a graph $G$ is a partition 
$(X_0,\dots,X_6;Y_0,\dots,Y_6;Z_0,\dots,Z_6;W)$ of $V(G)$ into cliques such that all the following are satisfied:\footnote{Indices are understood to be in $\mathbb{Z}_7$.} 
\begin{enumerate}[(a)]
\item cliques $X_0,\dots,X_6$ are all nonempty;\footnote{Cliques $Y_0,\dots,Y_6,Z_0,\dots,Z_6,W$ may possibly be empty.} 
\item for all $i \in \mathbb{Z}_7$, $X_i$ is complete to $X_{i-1},X_{i+1}$ and anticomplete to $X_{i+2},X_{i+3},X_{i+4},X_{i+5}$; 
\item for all $i \in \mathbb{Z}_7$, $X_i$ is complete to $Y_i,Y_{i+3},Y_{i+6},Z_i,Z_{i+3},Z_{i+4},Z_{i+5},Z_{i+6},W$ and anticomplete to $Y_{i+1},Y_{i+2},Y_{i+4},Y_{i+5},Z_{i+1},Z_{i+2}$; 
\item \label{item:typo} for all $i \in \mathbb{Z}_7$, if $Y_i \neq \emptyset$, then $Y_{i+1},Y_{i+2},Y_{i+5},Y_{i+6},Z_{i+5},Z_{i+6}$ are all empty, and at most one of $Y_{i+3},Y_{i+4}$ is nonempty;\footnote{In particular, at most two of the cliques $Y_0,\dots,Y_6$ are nonempty.} 
\item for all $i \in \mathbb{Z}_7$, if $Z_i \neq \emptyset$, then $Z_{i+2},Z_{i+5}$ are empty;\footnote{In particular, at most three of the cliques $Z_0,\dots,Z_6$ are nonempty.} 
\item for all $i \in \mathbb{Z}_7$, $Y_i$ is complete to $Y_{i+3},Y_{i+4},Z_i,Z_{i+1},Z_{i+3},Z_{i+4},W$ and anticomplete to $Z_{i+2}$; 
\item for all $i \in \mathbb{Z}_7$, $Z_i$ is complete to $Z_{i+1},Z_{i+3},Z_{i+4},Z_{i+6},W$. 
\end{enumerate} 

Note that if $(X_0,\dots,X_6;Y_0,\dots,Y_6;Z_0,\dots,Z_6;W)$ is a special partition of a graph $G$, then by definition, the following hold: 
\begin{itemize} 
\item for all $i \in \mathbb{Z}_7$, $Y_i$
is complete to $X_i,X_{i+1},X_{i+4}$ and anticomplete to
$X_{i+2},X_{i+3},X_{i+5},X_{i+6}$; 
\item for all $i \in \mathbb{Z}_7$, $Z_i$
is complete to $X_i,X_{i+1},X_{i+2},X_{i+3},X_{i+4}$ and anticomplete to $X_{i+5},X_{i+6}$. 
\item the clique $W$ is complete to $V(G) \setminus W = (X_0 \cup \dots \cup X_6) \cup (Y_0 \cup \dots \cup Y_6) \cup (Z_0 \cup \dots \cup Z_6)$, and consequently, all vertices in $W$ are universal vertices of $G$.\footnote{However, it is possible that $W = \emptyset$, in which case, $G$ contains no universal vertices.}
\end{itemize} 

The following lemma may help the reader gain a more intuitive understanding of the special partition. (For further intuition regarding the special partition, see the definition of the family $\mathcal{M}$ in subsection~\ref{subsec:familyM}, and then see Proposition~\ref{prop-spec-part-thickening-MM}.)

\begin{lemma} [Lemma~7.2 of~\cite{4K1C4C6Alg}] \label{lemma-YiZi-in-special-part} Let $(X_0,\dots,X_6;Y_0,\dots,Y_6;Z_0,\dots,Z_6;W)$ be a special partition of a graph $G$, and set $Y := \bigcup_{i \in \mathbb{Z}_7} Y_i$ and $Z := \bigcup_{i \in \mathbb{Z}_7} Z_i$. Then exactly one of the following holds: 
\begin{enumerate}[(a)] 
\item there exists an index $i \in \mathbb{Z}_7$ such that $Y = Y_i \cup Y_{i+3}$ and $Z = Z_i \cup Z_{i+3} \cup Z_{i+4}$; 
\item there exists an index $i \in \mathbb{Z}_7$ such that all the following hold: 
\begin{itemize} 
\item $Y = Y_i$ and $Z = Z_{i+1} \cup Z_{i+2} \cup Z_{i+3}$, 
\item $Y_i,Z_{i+2}$ are both nonempty, 
\item at most one of $Z_{i+1},Z_{i+3}$ is nonempty. 
\end{itemize} 
\end{enumerate} 
Moreover, all the following hold: 
\begin{itemize} 
\item $Y$ and $Z$ are both cliques; 
\item if (a) holds, then $Y \cup Z$ is a clique; 
\item if (b) holds, then $Y \cup Z$ is not a clique. 
\end{itemize} 
\end{lemma}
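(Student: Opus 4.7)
The plan is to run a case analysis on the supports $S_Y = \{p \in \mathbb{Z}_7 : Y_p \neq \emptyset\}$ and $S_Z = \{p \in \mathbb{Z}_7 : Z_p \neq \emptyset\}$. The key preliminary observation is that (d) and (e) force these supports to be independent sets in specific graphs on $\mathbb{Z}_7$. For $S_Z$, condition (e) rules out differences $\pm 2$, so $S_Z$ is an independent set in the 7-cycle on $\mathbb{Z}_7$ whose edges are the pairs at distance $2$; a short enumeration shows that the seven maximum independent sets of size 3 are precisely the triples $\{j, j+3, j+4\}$ for $j \in \mathbb{Z}_7$, and that every independent set of size at most 2 is contained in such a triple. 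For $S_Y$, condition (d) forces any two distinct elements of $S_Y$ to differ by $3$ or $4$, so $|S_Y| \leq 2$, and in the equality case $S_Y = \{i, i+3\}$ for some $i$ (since $3$ and $4$ are negatives modulo $7$).

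I would then split into three cases based on $|S_Y|$. If $|S_Y| = 2$, say $S_Y = \{i, i+3\}$, applying (d) to both $Y_i$ and $Y_{i+3}$ empties $Z_{i+1}, Z_{i+2}, Z_{i+5}, Z_{i+6}$, so $S_Z \subseteq \{i, i+3, i+4\}$, which yields (a). If $|S_Y| = 1$, say $S_Y = \{i\}$, condition (d) still empties $Z_{i+5}$ and $Z_{i+6}$, and I would subdivide on whether $Z_{i+2}$ is nonempty: in the affirmative case, (e) applied at index $i+2$ empties $Z_i$ and $Z_{i+4}$, while (e) applied at $i+1$ (or at $i+3$) forbids $Z_{i+1}$ and $Z_{i+3}$ from simultaneously being nonempty, giving exactly (b); in the negative case, (e) again forbids $Z_{i+1}$ and $Z_{i+3}$ from simultaneously being nonempty, so $S_Z$ lies in $\{i, i+3, i+4\}$ or in $\{i+4, i, i+1\} = \{j, j+3, j+4\}$ for $j = i+4$, giving (a). Finally, if $S_Y = \emptyset$, the preliminary observation immediately supplies some $j$ with $S_Z \subseteq \{j, j+3, j+4\}$, giving (a).

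To see that (a) and (b) are mutually exclusive, I would use that (b) requires $Y = Y_i$ with $Z_{i+2} \neq \emptyset$, while (a) with some index $j$ forces $Y \subseteq Y_j \cup Y_{j+3}$; hence $j \in \{i, i-3\}$, and in either subcase the triple $\{j, j+3, j+4\}$ misses $i+2$, contradicting $Z_{i+2} \neq \emptyset$.

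The remaining verifications are direct unpackings of (f) and (g) together with the clique-hood of each $Y_p$ and $Z_p$. In case (a), (f) makes $Y_i$ and $Y_{i+3}$ complete to each other and both complete to $Z_i, Z_{i+3}, Z_{i+4}$, while (g) makes $Z_i, Z_{i+3}, Z_{i+4}$ pairwise complete, so $Y \cup Z$ is a clique. In case (b), the ``at most one of $Z_{i+1}, Z_{i+3}$ is nonempty'' constraint together with (f) and (g) makes $Y$ and $Z$ each cliques; however, (f) states that $Y_i$ is anticomplete to $Z_{i+2}$, and both are nonempty in (b), so $Y \cup Z$ is not a clique. The main obstacle is the bookkeeping in the $|S_Y| = 1$ case and the clean enumeration of the maximum independent sets in the distance-$2$ graph on $\mathbb{Z}_7$; after those, everything else is an immediate consequence of the definitions.
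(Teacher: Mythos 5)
Your argument is correct, but there is nothing in this paper to compare it against: the lemma is imported verbatim as Lemma~7.2 of~\cite{4K1C4C6Alg} and is not proved here (the paper explicitly describes the source's verification as ``routine case checking''). Your self-contained derivation checks out in all its parts. The reduction of conditions (d) and (e) to constraints on the supports $S_Y$ and $S_Z$ is clean: (e) does make $S_Z$ an independent set of the distance-$2$ circulant on $\mathbb{Z}_7$ (which is a $7$-cycle), whose seven maximum independent sets are exactly the triples $\{j,j+3,j+4\}$ and which absorb every smaller independent set; and (d) does force $|S_Y|\leq 2$ with $S_Y=\{i,i+3\}$ in the size-two case. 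I verified the three cases on $|S_Y|$, including the delicate subcase $S_Y=\{i\}$, $Z_{i+2}=\emptyset$, where the choice between the index $i$ and the index $i+4$ for outcome (a) depends on which of $Z_{i+1},Z_{i+3}$ vanishes, and (e) guarantees at least one does. The exclusivity argument (disjointness of the $Y_p$'s forces $j\in\{i,i+4\}$, and either triple $\{j,j+3,j+4\}$ omits $i+2$) and the clique statements via (f) and (g) are also correct; in particular, in case (b) the nonadjacency of $Y_i$ and $Z_{i+2}$ from (f), with both nonempty, is exactly what witnesses that $Y\cup Z$ is not a clique. This is presumably essentially the computation carried out in~\cite{4K1C4C6Alg}, organized somewhat more systematically through the independent-set observation.
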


It was shown in~\cite{ChinhHoang} that every $(4K_1,C_4,C_6)$-graph that contains an induced $C_7$ admits a special partition.\footnote{The term ``special partition'' was actually introduced in~\cite{4K1C4C6Alg}, not in~\cite{ChinhHoang}. However, as the reader can check, the structure described in~\cite{ChinhHoang} is precisely our ``special partition.''} However, our proof does not actually rely on this result. Instead, we only use the converse, obtained via routine case checking in~\cite{4K1C4C6Alg} and stated below. 

\begin{lemma} [Lemma~7.3(a) of~\cite{4K1C4C6Alg}] \label{lemma-spec-part-4K1C4C6FreeWithC7} Any graph that admits a special partition is $(4K_1,C_4,C_6)$-free and contains an induced $C_7$. 
\end{lemma}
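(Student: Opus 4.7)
The plan is to separately establish the existence of an induced $C_7$ and then each of the three forbidden-subgraph conditions. For the $C_7$, property~(a) lets us pick $x_i \in X_i$ for each $i \in \mathbb{Z}_7$, and property~(b) then says that $x_i$ is adjacent to $x_{i-1}, x_{i+1}$ and nonadjacent to $x_{i+2}, x_{i+3}, x_{i+4}, x_{i+5}$. Since $\{i-1,i+1,i+2,i+3,i+4,i+5\} = \mathbb{Z}_7 \setminus \{i\}$, the set $\{x_0,\dots,x_6\}$ induces a 7-hole in $G$.

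The next step is to make two standing reductions that greatly shrink the search space for induced copies of $4K_1$, $C_4$, and $C_6$. First, by property~(c) combined with the definition, every vertex of $W$ is universal in $G$; since none of $4K_1$, $C_4$, $C_6$ contains a universal vertex, any induced copy of such a graph in $G$ is contained in $G \setminus W$, so we may assume $W = \emptyset$. Second, each set $X_i$, $Y_i$, $Z_i$ is a clique, and each of $4K_1$, $C_4$, $C_6$ is triangle-free, so any induced copy meets each of these cliques in at most one vertex. Thus it suffices to check that no choice of pairwise distinct representatives, one from each of some sub-collection of the $X_i$'s, $Y_j$'s, $Z_k$'s, induces a $4K_1$, $C_4$, or $C_6$.

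Now I would apply Lemma~\ref{lemma-YiZi-in-special-part}, which up to cyclic rotation leaves two structural cases: either $Y = Y_i \cup Y_{i+3}$ and $Z = Z_i \cup Z_{i+3} \cup Z_{i+4}$, or else $Y = Y_i$ with $Z \subseteq Z_{i+1} \cup Z_{i+2} \cup Z_{i+3}$ (with $Z_{i+2} \neq \emptyset$ and at most one of $Z_{i+1}, Z_{i+3}$ nonempty). In each case only a bounded number of parts are nonempty, so I can enumerate all candidate representative tuples and use the adjacencies in~(b),(c),(f),(g) to verify directly that the induced subgraph is never $4K_1$, $C_4$, or $C_6$. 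For example, for $4K_1$-freeness, vertices from any $Z_j$ are complete to five consecutive $X_i$'s, so they immediately kill any independent transversal of four $X$-indices; and the induced ``$C_7$-graph'' on the $X_i$'s has independence number $3$. The $C_4$ and $C_6$ checks follow similarly, exploiting that the $X$-level adjacency is exactly a $C_7$, which is itself $(C_4,C_6)$-free, and the $Y$/$Z$-enhancements always introduce enough edges to prevent an even hole.

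The main obstacle is simply the volume of case checking, which is unavoidable but routine given the strong adjacency constraints. As this verification is carried out in full in~\cite{4K1C4C6Alg}, I would complete the proof by deferring the casework to that reference.
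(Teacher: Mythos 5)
The paper does not prove this lemma at all: it is imported verbatim as Lemma~7.3(a) of~\cite{4K1C4C6Alg}, with the remark that it is obtained there by routine case checking, and your proposal ends the same way, by deferring the finite casework to that reference; so in substance you are taking the same route, with some correct preliminary reductions (the transversal $x_0,\dots,x_6$ giving the $C_7$, and the removal of $W$ via universality) added on top. One justification in your reduction is off, though: triangle-freeness of $4K_1$, $C_4$, $C_6$ only shows that an induced copy meets each clique part in at most \emph{two} vertices, not one, since a triangle-free graph may contain edges. The correct reason the ``one representative per part'' reduction works is that, by the definition of a special partition, any two vertices in the same part are twins in $G$, and none of $4K_1$, $C_4$, $C_6$ contains a pair of twins (for $4K_1$ your stable-set observation suffices). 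This is a one-line repair, but as stated the step your enumeration relies on is not justified.
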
 

A {\em 7-saucer} is a graph $G$ whose vertex set can be partitioned into sets $X_0,\dots,X_6,Y_0,\dots,Y_6,$ $Z_0,\dots,Z_6,W,A$ (with indices undertsood to be in $\mathbb{Z}_7$) such that all the following hold: 
\begin{itemize} 
\item $(X_0,\dots,X_6;Y_0,\dots,Y_6;Z_0,\dots,Z_6;W)$ is a special partition of $G \setminus A$; 
\item $A$ is anticomplete to $X_0,\dots,X_6$;\footnote{Thus, $N_G(A) \subseteq (Y_0 \cup \dots \cup Y_6) \cup (Z_0 \cup \dots \cup Z_6) \cup W$.} 
\item for all $i \in \mathbb{Z}_7$, either $A$ is anticomplete to $Y_i$, or $Z_{i+2} = \emptyset$; 
\item either $A = \emptyset$, or $A$ can be partitioned into nonempty cliques $A_1,\dots,A_{\ell}$, pairwise anticomplete to each other, such that for all $i \in \{1,\dots,\ell\}$, $A_i$ can be ordered as $A_i = \{a_1^i,\dots,a_{r_i}^i\}$ so that $N_G[a_{r_i}^i] \subseteq \dots \subseteq N_G[a_1^i]$. 
\end{itemize} 
Under these circumstances, we also say that $(X_0,\dots,X_6;Y_0,\dots,Y_6;Z_0,\dots,Z_6;W;A)$ is a {\em 7-saucer partition} of the 7-saucer $G$. 

One of the main goals of this section is to prove that a graph is $(2P_3,C_4,C_6)$-free and contains an induced $C_7$ if and only if it is a 7-saucer (see Theorem~\ref{thm-7-saucer}; for the ``if'' part, see Proposition~\ref{prop-7-saucer-in-class}, and for the ``only if'' part, see Lemma~\ref{lemma-main-withC7}).

\begin{proposition} \label{prop-7-saucer-NGA-clique} Let $G$ be a 7-saucer, and let $(X_0,\dots,X_6;Y_0,\dots,Y_6;Z_0,\dots,Z_6;W;A)$ be an associated 7-saucer partition of $G$. Set $X := \bigcup_{i \in \mathbb{Z}_7} X_i$, $Y := \bigcup_{i \in \mathbb{Z}_7} Y_i$ and $Z := \bigcup_{i \in \mathbb{Z}_7} Z_i$. Then all the following hold: 
\begin{enumerate}[(a)] 
\item $N_G(A) \subseteq Y \cup Z \cup W$; 
\item $N_G(A)$ is a clique; 
\item for all $v \in N_G(A)$, the set $V(G) \setminus N_G[v]$ can be partitioned into cliques, pairwise anticomplete to each other.\footnote{Here, it is possible that $V(G) \setminus N_G[v] = \emptyset$, i.e.\ that $V(G) = N_G[v]$, so that $v$ is a universal vertex of $G$.} 
\end{enumerate} 
\end{proposition}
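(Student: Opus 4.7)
Part (a) is immediate from the 7-saucer definition, which states that $A$ is anticomplete to $X_0,\dots,X_6$; hence $N_G(A) \subseteq Y \cup Z \cup W$.

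For (b), I would apply Lemma~\ref{lemma-YiZi-in-special-part}, using throughout that $W$ is complete to $V(G \setminus A) \setminus W$ in $G$ (which follows from the special-partition axioms for $G \setminus A$, since every vertex of $W$ is universal in $G \setminus A$). In case (a) of the lemma, $Y \cup Z$ is a clique, so $Y \cup Z \cup W$ is a clique, and (b) follows from (a). In case (b) of the lemma, $Y = Y_i$ and $Z_{i+2} \neq \emptyset$; the third bullet of the 7-saucer definition at index $i$ then forces $A$ to be anticomplete to $Y_i = Y$, so $N_G(A) \subseteq Z \cup W$, which is a clique (since $Z$ is a clique by the lemma and $W$ is complete to $Z$).

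For (c), fix $v \in N_G(A)$. Write $A = A_1 \cup \dots \cup A_\ell$ with the $A_j$'s as in the 7-saucer definition; the sets $A_j \setminus N_G(v)$ are cliques, pairwise anticomplete, and each anticomplete to $X$. So it suffices to partition $(V(G) \setminus A) \setminus N_G[v]$ into cliques that are pairwise anticomplete and each anticomplete to $A$. If $v \in W$, then $v$ is universal in $G \setminus A$ and this set is empty. Otherwise, $v \in Y \cup Z$, and I would split into the two cases of Lemma~\ref{lemma-YiZi-in-special-part}, then into sub-cases by which $Y_j$ or $Z_j$ contains $v$. In case (a) of the lemma, $Y \cup Z \cup W$ is a clique, so the non-neighbors of $v$ in $V(G) \setminus A$ lie entirely in $X$: if $v \in Z_j$, they form the clique $X_{j+5} \cup X_{j+6}$; if $v \in Y_j$, they split into the two pairwise-anticomplete cliques $X_{j+2} \cup X_{j+3}$ and $X_{j+5} \cup X_{j+6}$ (applying property (b) of the special partition). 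In case (b) of the lemma, $v \in Z$, and, using properties (f) and (g) to see that $v$ is complete to all of $Y \cup (Z \setminus \{v\})$ when $v \in Z_{i+1}$ or $Z_{i+3}$, the non-neighbors of $v$ in $V(G) \setminus A$ form the single clique $X_i \cup X_{i+6}$ or $X_{i+1} \cup X_{i+2}$, respectively.

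The most delicate sub-case is $v \in Z_{i+2}$ of case (b), where the non-neighbors of $v$ in $V(G) \setminus A$ are $X_i \cup X_{i+1} \cup Y_i$; I would verify this is a single clique using that $Y_i$ is complete to $X_i, X_{i+1}$ (from property (c)) and $X_i$ is complete to $X_{i+1}$ (from property (b)), and verify that it is anticomplete to $A$ using that $A$ is anticomplete to $X$ (7-saucer definition) and, because $Z_{i+2} \neq \emptyset$, also to $Y_i$ (third bullet of the 7-saucer definition). No step is individually deep; the main obstacle is the careful bookkeeping of the case analysis, and the hypothesis relating $A$-neighbors in $Y_i$ to the emptiness of $Z_{i+2}$ is exactly what makes the otherwise problematic $Z_{i+2}$ sub-case go through.
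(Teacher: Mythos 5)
Your proposal is correct and follows essentially the same route as the paper: part (a) from the definition, part (b) via Lemma~\ref{lemma-YiZi-in-special-part} together with the $Y_i$/$Z_{i+2}$ condition in the 7-saucer definition, and part (c) by a case analysis on where $v$ lies, using the special-partition adjacencies and the fact that $A$ is anticomplete to $X$ (and to $Y_i$ when $Z_{i+2} \neq \emptyset$). The only difference is organizational --- you split part (c) along the two cases of Lemma~\ref{lemma-YiZi-in-special-part} while the paper splits on $v \in W$, $v \in Y$, or $v \in Z$ --- but the underlying facts invoked are identical.
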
 
\begin{proof}
In our proof, we will repeatedly use the fact that $(X_0,\dots,X_6;Y_0,\dots,Y_6;Z_0,\dots,Z_6;W)$ is a special partition of $G \setminus A$, as per the definition of a 7-saucer. 

For~(a), we simply observe that $A$ is anticomplete to $X$ (by the definition of a 7-saucer), and so, $N_G(A) \subseteq Y \cup Z \cup W$. 

Next, we prove~(b). If $Y \cup Z \cup W$ is a clique, then we are done by~(a). We may therefore assume that $Y \cup Z \cup W$ is not a clique. By the definition of a special partition, $W$ is a clique, complete to $X \cup Y \cup Z$. So, since $Y \cup Z \cup W$ is not a clique, we know that $Y \cup Z$ is not a clique. By Lemma~\ref{lemma-YiZi-in-special-part} and by symmetry, we may now assume that all the following hold: 
\begin{itemize} 
\item $Y = Y_0$ and $Z = Z_1 \cup Z_2 \cup Z_3$, 
\item $Y_0,Z_2$ are both nonempty, 
\item at most one of $Z_1,Z_3$ is nonempty. 
\end{itemize} 
Since $Z_2$ is nonempty, the definition of a 7-saucer guarantees that $A$ is anticomplete to $Y_0$. We now deduce that $N_G(A) \subseteq Z_1 \cup Z_2 \cup Z_3 \cup W$. But by the definition of a special partition, $Z_1 \cup Z_2 \cup Z_3 \cup W$ is a clique; consequently, $N_G(A)$ is also a clique. This proves~(b). 

It remains to prove~(c). Fix a vertex $v \in N_G(A)$. We must show that $V(G) \setminus N_G[v]$ can be partitioned into cliques, pairwise anticomplete to each other. Recall that $N_G(A) \subseteq Y \cup Z \cup W$; thus, $v \in Y \cup Z \cup W$. 

Suppose first that $v \in W$. By the definitions of a 7-saucer and a special partition, $W$ is a clique, complete to $V(G) \setminus (W \cup A)$. So, $V(G) \setminus N_G[v] \subseteq A$. But by the definition of a 7-saucer, $A$ can be partitioned into (possibly empty) cliques, pairwise anticomplete to each other. 

Next, suppose that $v \in Y$. By symmetry, we may assume that $v \in Y_0$. Since $v \in N_G(A) \cap Y_0$, we see that $A$ is not anticomplete to $Y_0$, and so by the definition of a 7-saucer, we know that $Z_2 = \emptyset$. Then the definitions of a 7-saucer and a special partition imply that $Y_0$ is a clique, complete to $(X_0 \cup X_1 
\cup X_4) (Y \setminus Y_0) \cup Z \cup W$, and furthermore, that $V(G) \setminus N_G[v] \subseteq (X_2 \cup X_3) \cup (X_5 \cup X_6) \cup A$. But once again by the definitions of a 7-saucer and a special partition, all the following hold: 
\begin{itemize} 
\item $A$ can be partitioned into (possibly empty) cliques, pairwise anticomplete to each other; 
\item $A$ is anticomplete to $X$ (and therefore, to $X_2 \cup X_3$ and $X_5 \cup X_6$ as well); 
\item $X_2 \cup X_3$ and $X_5 \cup X_6$ are disjoint cliques, anticomplete to each other. 
\end{itemize} 
Thus, $V(G) \setminus N_G[v]$ can be partitioned into cliques, pairwise anticomplete to each other. 

Finally, suppose that $v \in Z$. By symmetry, we may assume that $v \in Z_2$. In particular, $Z_2 \neq \emptyset$, and so by the definition of a 7-saucer, $A$ is anticomplete to $Y_0$. On the other hand, the definition of a special partition implies that $Z_2$ is a clique, complete to $(X_2 \cup X_3 \cup X_4 \cup X_5 \cup X_6) \cup (Y \setminus Y_0) \cup (Z \setminus Z_2) \cup W$. Thus, $V(G) \setminus N_G[v] \subseteq X_0 \cup X_1 \cup Y_0 \cup A$. It is now enough to show that $X_0 \cup X_1 \cup Y_0 \cup A$ can be partitioned into cliques, pairwise anticomplete to each other. By the definition of a special partition, $X_0 \cup X_1 \cup Y_0$ is a clique. Next, the definition of a 7-saucer guarantees that $A$ is anticomplete to $X_0 \cup X_1$, and we already saw that $A$ is anticomplete to $Y_0$. Finally, the definition of a special partition guarantees that $A$ can be partitioned into (possibly empty) cliques, pairwise anticomplete to each other. This completes the proof of~(c). 
\end{proof}

\begin{proposition} \label{prop-7-saucer-in-class} 
Every 7-saucer is $(2P_3,C_4,C_6)$-free and contains an induced $C_7$. 
\end{proposition}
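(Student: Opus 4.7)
The plan is to reduce all three forbidden-subgraph properties to the induced subgraph $G' := G \setminus A$. Fixing an associated 7-saucer partition $(X_0,\dots,X_6;Y_0,\dots,Y_6;Z_0,\dots,Z_6;W;A)$, I observe that by the very definition of a 7-saucer, $(X_0,\dots,X_6;Y_0,\dots,Y_6;Z_0,\dots,Z_6;W)$ is a special partition of $G'$. Lemma~\ref{lemma-spec-part-4K1C4C6FreeWithC7} then yields that $G'$ is $(4K_1,C_4,C_6)$-free and contains an induced $C_7$. The induced $C_7$ transfers immediately to $G$, and since $4K_1$ is an induced subgraph of $2P_3$, $G'$ is also $2P_3$-free. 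It therefore remains to show that $G$ inherits $C_4$-, $C_6$-, and $2P_3$-freeness from $G'$.

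For $C_4$- and $C_6$-freeness, I plan to argue directly that no vertex of $A$ can lie on any induced $C_4$ or $C_6$ in $G$. Indeed, by Proposition~\ref{prop-7-saucer-NGA-clique}(b), $N_G(A)$ is a clique, so if some $v \in A$ were a vertex of an induced $C_4$ or $C_6$ in $G$, then its two neighbors on that hole would lie in $N_G(A)$ and would hence be adjacent, contradicting the fact that the two neighbors of any vertex of a $C_4$ or $C_6$ are nonadjacent. Consequently every induced $C_4$ or $C_6$ in $G$ lies in $G'$, and there is none.

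For $2P_3$-freeness, I would peel off $N_G(A)$ and then $A$ using the two reduction lemmas from subsection~\ref{subsec:SimpleResults}. Since $A \cap N_G(A) = \emptyset$, we have $N_G(A) \subsetneq V(G)$; Proposition~\ref{prop-7-saucer-NGA-clique}(c) then supplies precisely the hypothesis needed to apply Proposition~\ref{prop-G-x-disjoint-cliques-2P3-free} with $X := N_G(A)$, reducing $2P_3$-freeness of $G$ to that of $G^* := G \setminus N_G(A)$. Setting $B := V(G) \setminus (A \cup N_G(A))$, Proposition~\ref{prop-7-saucer-NGA-clique}(a) gives $N_G(A) \subseteq Y_0 \cup \dots \cup Y_6 \cup Z_0 \cup \dots \cup Z_6 \cup W$, so $X_0 \cup \dots \cup X_6 \subseteq B$; in particular $B \neq \emptyset$ and $A \subsetneq V(G^*)$. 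Since $A$ is anticomplete to $B$ in $G^*$ (by definition of $N_G(A)$) and is partitioned into nonempty cliques pairwise anticomplete to each other (by the 7-saucer definition), Proposition~\ref{prop-add-complete-components}(b), applied inside $G^*$, further reduces the question to $2P_3$-freeness of $G^* \setminus A = G[B]$, which holds because $G[B]$ is an induced subgraph of $G'$.

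I do not expect any genuine obstacle here: once Proposition~\ref{prop-7-saucer-NGA-clique} is in hand, the proof is essentially a careful assembly of that proposition with the reduction lemmas from subsection~\ref{subsec:SimpleResults}. The one point that needs attention is the verification of the proper-subset hypotheses in the two reduction propositions, both of which boil down to the fact that the nonempty set $X_0 \cup \dots \cup X_6$ is disjoint from both $A$ and $N_G(A)$.
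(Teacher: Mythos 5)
Your overall strategy (reduce everything to $G \setminus A$ via the special partition and Lemma~\ref{lemma-spec-part-4K1C4C6FreeWithC7}, then handle $A$ using Proposition~\ref{prop-7-saucer-NGA-clique} together with Propositions~\ref{prop-G-x-disjoint-cliques-2P3-free} and~\ref{prop-add-complete-components}) is the same as the paper's, and your treatment of the $C_7$ and of $2P_3$-freeness is correct. However, your argument for $C_4$- and $C_6$-freeness has a genuine gap. You assert that if $v \in A$ lies on an induced $C_4$ or $C_6$, then ``its two neighbors on that hole would lie in $N_G(A)$.'' This is false: a neighbor of $v$ on the hole may itself belong to $A$, namely to the same clique component of $G[A]$ as $v$. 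If both hole-neighbors of $v$ lie in $A$ they are adjacent (same clique component) and you still get a contradiction, but the mixed case is not covered. Concretely, consider a component $A_j = \{v,u\}$ of $G[A]$ and vertices $w,w' \in N_G(A)$ with $v$ adjacent to $w$ but not $w'$, and $u$ adjacent to $w'$ but not $w$; then $v,u,w',w,v$ is a 4-hole, and nothing in ``$N_G(A)$ is a clique'' plus ``the components of $G[A]$ are cliques, pairwise anticomplete'' rules this out ($w$ and $w'$ are adjacent, as the clique condition demands, but they are consecutive on the $C_4$, so no contradiction arises). You cannot appeal to Proposition~\ref{prop-C4Free-CoBip} to exclude this configuration, since that proposition assumes $C_4$-freeness, which is exactly what you are trying to prove.

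What excludes this configuration is the one hypothesis of the 7-saucer definition that your proof never uses: the ordering of each $A_i$ with nested closed neighborhoods $N_G[a_{r_i}^i] \subseteq \dots \subseteq N_G[a_1^i]$. In the configuration above, one of $N_G[v],N_G[u]$ would contain the other, which is incompatible with $v,u$ each having a private neighbor among $w,w'$. The paper's proof of this step (Claim~\ref{prop-7-saucer-in-class-claim-C4C6-free}) exploits precisely this: given a hole meeting a component $B = \{b_1,\dots,b_r\}$ of $G[A]$, it takes the largest index $i$ with $b_i$ on the hole and uses the nested ordering, together with the fact that $N_G(A)$ is a clique, to show that $b_i$ is simplicial in $G \setminus \{b_{i+1},\dots,b_r\}$ --- a graph that still contains the hole --- contradicting the fact that holes have no simplicial vertices. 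Your proof would be complete if you replaced your one-sentence argument for hole-freeness with an argument of this kind (or with a case analysis on $|X \cap A_j|$ for each component $A_j$ that invokes the nested ordering in the two-vertex case).
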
 
\begin{proof} 
Fix a 7-saucer $G$, and let $(X_0,\dots,X_6;Y_0,\dots,Y_6;Z_0,\dots,Z_6;W;A)$ be an associated 7-saucer partition of $G$. By definition, $(X_0,\dots,X_6;Y_0,\dots,Y_6;Z_0,\dots,Z_6;W)$ is a special partition of $G \setminus A$, and so by Lemma~\ref{lemma-spec-part-4K1C4C6FreeWithC7}, $G \setminus A$ is $(4K_1,C_4,C_6)$-free and contains an induced $C_7$. Since $G \setminus A$ contains an induced $C_7$, so does $G$. It remains to show that $G$ is $(2P_3,C_4,C_6)$-free. 

\begin{adjustwidth}{1cm}{1cm} 
\begin{claim} \label{prop-7-saucer-in-class-claim-C4C6-free} 
Every hole of $G$ is in fact a hole of $G \setminus A$. Consequently, $G$ is $(C_4,C_6)$-free. 
\end{claim} 
\end{adjustwidth} 
{\em Proof of Claim~\ref{prop-7-saucer-in-class-claim-C4C6-free}.} Fix a hole $v_0,v_1,\dots,v_{k-1},v_0$ (with $k \geq 4$, and with indices understood to be in $\mathbb{Z}_k$) in $G$. We must show that $\{v_0,v_1,\dots,v_{k-1}\} \cap A = \emptyset$. Suppose otherwise. Let $B$ be the vertex set of a component of $G[A]$ such that $\{v_0,v_1,\dots,v_{k-1}\} \cap B \neq \emptyset$. By the definition of a 7-saucer, $B$ is a clique, and there exists an ordering $B = \{b_1,\dots,b_r\}$ of $B$ such that $N_G[b_r] \subseteq \dots \subseteq N_G[b_1]$. Since $N_G(A)$ is clique (by Proposition~\ref{prop-7-saucer-NGA-clique}), we see that for all $i \in \{1,\dots,r\}$, $b_i$ is simplicial in $G \setminus \{b_{i+1},\dots,b_r\}$. Now, fix the largest index $i \in \{1,\dots,r\}$ such that $b_i \in \{v_0,v_1,\dots,v_{k-1}\}$; by symmetry, we may assume that $b_i = v_0$. Then $v_0,v_1,\dots,v_{k-1},v_0$ is a hole in $G \setminus \{b_{i+1},\dots,b_r\}$, and we deduce that $v_0 = b_i$ is a simplicial vertex in the hole $v_0,v_1,\dots,v_{k-1},v_0$, contrary to the fact that holes contain no simplicial vertices. This proves that every hole of $G$ is in fact a hole of $G \setminus A$. Since $G \setminus A$ is $(C_4,C_6)$-free, we deduce that $G$ is also $(C_4,C_6)$-free.~$\blacklozenge$

\begin{adjustwidth}{1cm}{1cm} 
\begin{claim} \label{prop-7-saucer-in-class-claim-2P3-free} 
$G$ is $2P_3$-free. 
\end{claim} 
\end{adjustwidth} 
{\em Proof of Claim~\ref{prop-7-saucer-in-class-claim-2P3-free}.} By Propositions~\ref{prop-G-x-disjoint-cliques-2P3-free} and~\ref{prop-7-saucer-NGA-clique}, it suffices to show that $G' := G \setminus N_G(A)$ is $2P_3$-free. First, by the definition of a 7-saucer, $A$ is either empty or can be partitioned into cliques, pairwise anticomplete to each other. Moreover, it is clear that $A$ is anticomplete to $V(G') \setminus A = V(G) \setminus N_G[A]$ in $G'$. So, by Proposition~\ref{prop-add-complete-components}(b), it is enough to show that $G' \setminus A = G \setminus N_G[A]$ is $2P_3$-free; since $G' \setminus A = G \setminus N_G[A]$ is an induced subgraph of $G \setminus A$, and since $4K_1$ is an induced subgraph of $2P_3$, it in fact suffices to show that $G \setminus A$ is $4K_1$-free. But this follows from Lemma~\ref{lemma-spec-part-4K1C4C6FreeWithC7}, since by the definition of a 7-saucer, $G \setminus A$ admits a special partition, namely $(X_0,\dots,X_6;Y_0,\dots,Y_6;Z_0,\dots,Z_6;W)$.~$\blacklozenge$  

\medskip 

Claims~\ref{prop-7-saucer-in-class-claim-C4C6-free} and~\ref{prop-7-saucer-in-class-claim-2P3-free} together guarantee that $G$ is $(2P_3,C_4,C_6)$-free, which completes the argument. \end{proof} 

\subsection{The family $\boldsymbol{\mathcal{M}}$} \label{subsec:familyM}

In this subsection, we define the family $\mathcal{M}$ of graphs, which we need in order to properly state Theorem~\ref{thm-main-withC7}, our structure theorem for $(P_7,C_4,C_6)$-free graphs that contain an induced $C_7$ and do not admit a clique-cutset. We first define graphs $M_0$, $M_1$, $M_2$, and $M_3$, as follows. 

$M_0$ is the graph with vertex set $\{x_0,x_1,x_2,x_3,x_4,x_5,x_6,y_0,y_3,z_0,z_3,z_4\}$ and with adjacency as follows: 
\begin{itemize} 
\item $x_0,x_1,x_2,x_3,x_4,x_5,x_6,x_0$ is a 7-hole; 
\item $y_0$ is complete to $\{x_0,x_1,x_4\}$ and anticompelte to $\{x_2,x_3,x_5,x_6\}$; 
\item $y_3$ is complete to $\{x_3,x_4,x_0\}$ and anticomplete to $\{x_1,x_2,x_5,x_6\}$; 
\item $z_0$ is complete to $\{x_0,x_1,x_2,x_3,x_4\}$ and anticomplete to $\{x_5,x_6\}$; 
\item $z_3$ is complete to $\{x_3,x_4,x_5,x_6,x_0\}$ and anticompelte to $\{x_1,x_2\}$; 
\item $z_4$ is complete to $\{x_4,x_5,x_6,x_0,x_1\}$ and anticompelte to $\{x_2,x_3\}$; 
\item $\{y_0,y_3,z_0,z_3,z_4\}$ is a clique. 
\end{itemize} 

$M_1$ is the graph with vertex set $\{x_0,x_1,x_2,x_3,x_4,x_5,x_6,y_0,z_2\}$ and with adjacency as follows: 
\begin{itemize} 
\item $x_0,x_1,x_2,x_3,x_4,x_5,x_6,x_0$ is a 7-hole; 
\item $y_0$ is complete to $\{x_0,x_1,x_4\}$ and anticomplete to $\{x_2,x_3,x_5,x_6\}$; 
\item $z_2$ is complete to $\{x_2,x_3,x_4,x_5,x_6\}$ and anticomplete to $\{x_0,x_1\}$; 
\item $y_0$ and $z_2$ are nonadjacent.  
\end{itemize} 

$M_2$ is the graph with vertex set $\{x_0,x_1,x_2,x_3,x_4,x_5,x_6,y_0,z_1,z_2\}$ and with adjacency as follows: 
\begin{itemize} 
\item $M_2 \setminus z_1 = M_1$; 
\item $z_1$ is complete to $\{x_1,x_2,x_3,x_4,x_5\}$ and anticomplete to $\{x_6,x_0\}$; 
\item $z_1$ is complete to $\{y_0,z_2\}$. 
\end{itemize} 

$M_3$ is the graph with vertex set $\{x_0,x_1,x_2,x_3,x_4,x_5,x_6,y_0,z_2,z_3\}$ and with adjacency as follows: 
\begin{itemize} 
\item $M_3 \setminus z_3 = M_1$; 
\item $z_3$ is complete to $\{x_3,x_4,x_5,x_6,x_0\}$ and anticomplete to $\{x_1,x_2\}$; 
\item $z_3$ is complete to $\{y_0,z_2\}$. 
\end{itemize} 

We now define the family $\mathcal{M}$, as follows: 
\begin{displaymath} 
\begin{array}{rcl} 
\mathcal{M} & := & \big\{M_0 \setminus S \mid S \subseteq \{y_0,y_3,z_0,z_3,z_4\}\big\} \cup \{M_1,M_2,M_3\}. 
\end{array} 
\end{displaymath} 
Thus, $\mathcal{M}$ is the collection of graphs whose members are $M_1,M_2,M_3$, plus all induced subgraphs of $M_0$ that still contain the hole $x_0,x_1,x_2,x_3,x_4,x_5,x_6,x_0$. We note that each graph in $\mathcal{M}$ has at most 12 vertices and contains an induced $C_7$.

\begin{proposition} \label{prop-MM-anticonn} Every graph in $\mathcal{M}$ is anticonnected and contains no simplicial vertices and no universal vertices. Consequently, any thickening of a graph in $\mathcal{M}$ is anticonnected contains no simplicial vertices and no universal vertices. 
\end{proposition}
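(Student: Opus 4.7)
The plan is to verify by direct case analysis over the (finite) family $\mathcal{M}$ that each of its members is anticonnected, contains no simplicial vertex, and contains no universal vertex, and then to invoke Proposition~\ref{prop-thickening}(\ref{ref-prop-thickening-simplicial})--(\ref{ref-prop-thickening-H-anticonn}) to transfer these three properties to thickenings. Since every graph in $\mathcal{M}$ has at least seven vertices, the hypothesis of part~(\ref{ref-prop-thickening-H-anticonn}) is satisfied.

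The key unifying feature is that the 7-hole $H := x_0,x_1,\ldots,x_6,x_0$ is an induced subgraph of every $G \in \mathcal{M}$, and that all edges from any ``extra'' vertex $v \in V(G) \setminus V(H)$ to $V(H)$ are explicitly spelled out in the definitions of $M_0$, $M_1$, $M_2$, $M_3$. For the absence of universal and simplicial vertices, I would argue as follows: each hole vertex $x_i$ has non-neighbor $x_{i+3}$ in $H$ and non-adjacent neighbors $x_{i-1}, x_{i+1}$ in $H$, which handles the hole vertices uniformly across the whole family. For each extra vertex $v \in \{y_0,y_3,z_0,z_1,z_2,z_3,z_4\}$ that may occur in some $G \in \mathcal{M}$, one reads off from its definition that $v$ has at least one non-neighbor among $\{x_0,\ldots,x_6\}$ (witnessing non-universality) and that $v$ is complete to a set of vertices in $V(H)$ that contains two non-adjacent vertices (witnessing non-simpliciality); for instance, $y_0$ is complete to $\{x_0,x_1,x_4\}$ with $x_1,x_4$ non-adjacent in $H$, while each $z_j$ is complete to five consecutive vertices of $H$, which clearly contain a non-adjacent pair. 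Since $V(H) \subseteq V(G)$ for every $G \in \mathcal{M}$, these witnesses persist in every graph in the family.

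For anticonnectedness, I would observe that $\overline{H}$ contains the Hamilton cycle $x_0,x_2,x_4,x_6,x_1,x_3,x_5,x_0$ and is therefore connected. By the previous argument, each extra vertex has a non-neighbor in $V(H)$, and is thus adjacent in $\overline{G}$ to some vertex of $\overline{G}[V(H)]$; it follows that $\overline{G}$ is connected, i.e., $G$ is anticonnected. I do not expect any real obstacle: the argument reduces to a bounded amount of book-keeping, and the common 7-hole $H$ together with the ``extra'' vertices' adjacency patterns (which only ever reference $H$, not each other, for the purposes of the three properties being checked) make the verification uniform across the family.
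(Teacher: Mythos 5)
Your proposal is correct and follows essentially the same route as the paper: reduce to the base graphs via Proposition~\ref{prop-thickening}, use the connectedness of $\overline{C_7}$ plus the fact that every extra vertex has a nonneighbor on the hole for anticonnectedness (hence non-universality), and exhibit a nonadjacent pair of neighbors for each vertex to rule out simpliciality. The only cosmetic difference is that the paper handles the extra vertices' non-simpliciality uniformly by noting they have at least three neighbors in the triangle-free hole, whereas you read off an explicit nonadjacent pair from each adjacency list; both amount to the same check.
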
 
\begin{proof} 
In view of Proposition~\ref{prop-thickening}, it is enough to prove the first statement. So, fix any graph $M \in \mathcal{M}$; we must show that $M$ is anticonnected and contains no simplicial vertices and no universal vertices. First of all, $x_0,x_1,\dots,x_6,x_0$ is a 7-hole in $M$. Since $\overline{C_7}$ is connected, we see that $M[x_0,x_1,\dots,x_6]$ is anticonnected. Moreover, by the definition of $\mathcal{M}$, every vertex in $V(M) \setminus \{x_0,x_1,\dots,x_6\}$ has a nonneighbor in $\{x_0,x_1,\dots,x_6\}$. Thus, $M$ is anticonnected. Since $M$ has more than one vertex, this implies, in particular, that $M$ contains no universal vertices. It remains to show that $M$ contains no simplicial vertices. First, for all $i \in \mathbb{Z}_7$, $x_i$ is complete to $\{x_{i-1},x_{i+1}\}$, and so since $x_{i-1},x_{i+1}$ are nonadjacent, $x_i$ is not a simplicial vertex of $M$. On the other hand, by the definition of the family $\mathcal{M}$, every vertex in $V(M) \setminus \{x_0,x_1,\dots,x_6\}$ has at least three neighbors in $\{x_0,x_1,\dots,x_6\}$; since $M[x_0,x_1,\dots,x_6]$ is triangle-free, we deduce that no vertex in $V(M) \setminus \{x_0,x_1,\dots,x_6\}$ is simplicial. Therefore, $M$ contains no simplicial vertices. 
\end{proof}

\begin{proposition} \label{prop-spec-part-thickening-MM} For any graph $G$, the following are equivalent: 
\begin{enumerate}[(a)] 
\item $G$ admits a special partition; 
\item $G$ has exactly one nontrivial anticomponent, and this anticomponent is a thickening of a graph in $\mathcal{M}$; 
\item $G$ can be obtained from a thickening of a graph in $\mathcal{M}$ by possibly adding universal vertices to it.  
\end{enumerate} 
\end{proposition}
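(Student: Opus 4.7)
The plan is to derive (b)$\Leftrightarrow$(c) from Proposition~\ref{prop-non-trivial-anticomp-univ-vertices}, and then to establish (c)$\Leftrightarrow$(a) by a direct translation between the data of a thickening of a graph in $\mathcal{M}$ (with universal vertices possibly adjoined) and the data of a special partition. For the first equivalence, I would invoke Proposition~\ref{prop-MM-anticonn}: every $M \in \mathcal{M}$ has at least seven vertices, so any thickening of $M$ has at least two vertices and is anticonnected, and Proposition~\ref{prop-non-trivial-anticomp-univ-vertices} then yields (b)$\Leftrightarrow$(c).

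For (c)$\Rightarrow$(a), I would start from a thickening $Q$ of some $M \in \mathcal{M}$, witnessed by a family $\{T_v\}_{v \in V(M)}$, together with a (possibly empty) clique $W$ of vertices universal in $G$ that are added to $Q$ to produce $G$. Setting $X_i := T_{x_i}$ for $i \in \mathbb{Z}_7$, $Y_j := T_{y_j}$ if $y_j \in V(M)$ and $Y_j := \emptyset$ otherwise, and $Z_k := T_{z_k}$ if $z_k \in V(M)$ and $Z_k := \emptyset$ otherwise, one obtains a candidate special partition. By the definition of a thickening, each clique and each complete/anticomplete condition in the definition of a special partition (restricted to nonempty classes) is precisely a clique/adjacency/non-adjacency in $M$. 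I would then walk through the edge lists of $M_0, M_1, M_2, M_3$ to confirm each requirement, noting that for $M_0 \setminus S$ some of the $Y_j, Z_k$ are simply empty and the corresponding requirements hold vacuously. The clauses involving $W$ follow because $W$ is a clique of universal vertices of $G$.

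For (a)$\Rightarrow$(c), given a special partition $(X_0,\dots,X_6;Y_0,\dots,Y_6;Z_0,\dots,Z_6;W)$ of $G$, I would set $Y := \bigcup_{i \in \mathbb{Z}_7} Y_i$ and $Z := \bigcup_{i \in \mathbb{Z}_7} Z_i$ and apply Lemma~\ref{lemma-YiZi-in-special-part}. The cyclic symmetry of the special partition (simultaneous reindexing $X_i \mapsto X_{i+k}$, $Y_i \mapsto Y_{i+k}$, $Z_i \mapsto Z_{i+k}$) allows me to assume the index $i$ supplied by the lemma equals $0$. In case~(a) of the lemma, I let $S \subseteq \{y_0,y_3,z_0,z_3,z_4\}$ be the set of labels whose associated clique is empty and set $M := M_0 \setminus S \in \mathcal{M}$. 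In case~(b), I set $M := M_1, M_2,$ or $M_3$ according to whether neither, only $Z_1$, or only $Z_3$ is nonempty among $Z_1, Z_3$. In every case, the nonempty cliques among $X_0,\dots,X_6,Y_0,\dots,Y_6,Z_0,\dots,Z_6$ form a witnessing family exhibiting $G \setminus W$ as a thickening of $M$: the complete/anticomplete relations between these cliques, prescribed by the definition of a special partition, match the edges/non-edges of $M$ (the same verification as in the previous direction, read in reverse). Finally, by the definition of a special partition, $W$ is a clique complete to $V(G) \setminus W$, so $G$ is obtained from $G \setminus W$ by possibly adding the universal vertices in $W$.

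The main obstacle is pure bookkeeping: under the natural correspondence $X_i \leftrightarrow x_i$, $Y_j \leftrightarrow y_j$, $Z_k \leftrightarrow z_k$, one must line up every complete/anticomplete condition in the definition of a special partition with an adjacency or non-adjacency in $M_0, M_1, M_2, M_3$. Because each graph in $\mathcal{M}$ is specified by an explicit list of neighborhoods, and because the special partition is cyclically symmetric, this verification reduces to a finite, routine check, with Lemma~\ref{lemma-YiZi-in-special-part} dictating which graph in $\mathcal{M}$ to compare against in each case.
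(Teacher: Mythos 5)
Your proposal is correct and follows essentially the same route as the paper: (b)$\Leftrightarrow$(c) via Propositions~\ref{prop-MM-anticonn} and~\ref{prop-non-trivial-anticomp-univ-vertices}, (a)$\Rightarrow$(c) by peeling off $W$ and applying Lemma~\ref{lemma-YiZi-in-special-part} to identify the quotient as a member of $\mathcal{M}$, and (c)$\Rightarrow$(a) by reassembling the special partition from the thickening classes with the added universal vertices as $W$. The paper treats the case-by-case bookkeeping you describe as immediate from the definitions, but your more explicit account of which graph in $\mathcal{M}$ arises in each case of Lemma~\ref{lemma-YiZi-in-special-part} is accurate.
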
 
\begin{proof} By Proposition~\ref{prop-MM-anticonn}, any thickening of a graph in $\mathcal{M}$ is anticonnected. So, by Proposition~\ref{prop-non-trivial-anticomp-univ-vertices}, (b) and (c) are equivalent. It remains to prove that (a) and (c) are equivalent. 

Suppose first that (a) holds. Fix a special partition $(X_0,\dots,X_6;Y_0,\dots,Y_6;Z_0,\dots,Z_6;W)$ of $G$. Then $W$ is a clique, complete to $V(G) \setminus W$. Thus, $G$ can be obtained from $G \setminus W$ by possibly adding universal vertices to it. But Lemma~\ref{lemma-YiZi-in-special-part} and the relevant definitions immediately imply that $G \setminus W$ is a thickening of a graph in $\mathcal{M}$. So, (c) holds. 

Suppose conversely that (c) holds, so that $G$ can be obtained from a thickening $M^*$ of a graph $M \in \mathcal{M}$ by possibly adding universal vertices to $M^*$. By the definition of $\mathcal{M}$ and of the special partition, it is clear that $M^*$ admits a special partition $(X_0,\dots,X_6;Y_0,\dots,Y_6;Z_0,\dots,Z_6;\emptyset)$. But now for $W := V(G) \setminus V(M^*)$, we have that $W$ is a clique, complete to $V(M^*) = V(G) \setminus W$, and we deduce that $(X_0,\dots,X_6;Y_0,\dots,Y_6;Z_0,\dots,Z_6;W)$ is a special partition of $G$. So, (a) holds. 
\end{proof}

\subsection{Decomposing $\boldsymbol{(P_7,C_4,C_6)}$-free graphs that contain an induced $\boldsymbol{C_7}$}

\begin{lemma} \label{lemma-attach-to-C7} Let $G$ be a $(P_7,C_4,C_6)$-free graph, let $x_0,x_1,x_2,\dots,x_6,x_0$ (with indices in $\mathbb{Z}_7$) be a 7-hole in $G$, and let $v \in V(G) \setminus \{x_0,x_1,x_2,\dots,x_6\}$. Then one of the following holds: 
\begin{enumerate}[(a)] 
\item \label{ref-withC7-attach-ac} $v$ is anticomplete to $\{x_0,x_1,x_2,\dots,x_6\}$; 
\item \label{ref-withC7-attach-twin} there exists an index $i \in \mathbb{Z}_7$ such that $N_G(v) \cap \{x_0,x_1,x_2,\dots,x_6\} = \{x_{i-1},x_i,x_{i+1}\}$; 
\item \label{ref-withC7-attach-Yi} there exists an index $i \in \mathbb{Z}_7$ such that $N_G(v) \cap \{x_0,x_1,x_2,\dots,x_6\} = \{x_i,x_{i+1},x_{i+4}\}$; 
\item \label{ref-withC7-attach-Zi} there exists an index $i \in \mathbb{Z}_7$ such that $N_G(v) \cap \{x_0,x_1,x_2,\dots,x_6\} = \{x_i,x_{i+1},x_{i+2},x_{i+3},x_{i+4}\}$; 
\item \label{ref-withC7-attach-c} $v$ is complete to $\{x_0,x_1,x_2,\dots,x_6\}$. 
\end{enumerate} 
\end{lemma}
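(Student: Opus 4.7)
\medskip

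\noindent\textbf{Proof plan.} Let $N := N_G(v) \cap \{x_0,x_1,\dots,x_6\}$; I view $N$ as a subset of $\mathbb{Z}_7$ via the indexing. The strategy is to first use $C_4$-freeness to cut the list of candidate sets $N$ down to a very short one, and then to eliminate the few remaining bad candidates one by one using $C_6$-freeness and $P_7$-freeness.

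First, from $C_4$-freeness, I would observe the basic arc-filling rule: if $x_i,x_{i+2} \in N$ but $x_{i+1} \notin N$, then $v,x_i,x_{i+1},x_{i+2},v$ is a 4-hole, a contradiction. Hence, after partitioning $\mathbb{Z}_7$ cyclically into maximal runs of indices in $N$ and maximal gaps of indices outside $N$, every gap must have size at least $2$. Consequently, if $N$ has exactly $k$ maximal runs with $1 \le k$ and at least one gap, then $\mathbb{Z}_7$ decomposes into $k$ runs of total size $|N|\ge k$ and $k$ gaps of total size $\ge 2k$, so $7 \ge 3k$ and $k \le 2$. The cases $N = \emptyset$ and $N = \{x_0,\dots,x_6\}$ give (a) and (e) respectively.

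Next, I would enumerate the remaining possibilities up to cyclic rotation. If $k = 1$, the single run has size in $\{1,2,3,4,5\}$; if $k = 2$, the only partitions of $7$ into two runs and two gaps with each gap $\ge 2$ are $(1,2,1,3)$ (yielding $N = \{x_i, x_{i+3}\}$) and $(2,2,1,2)$ (yielding $N = \{x_i, x_{i+1}, x_{i+4}\}$). Among these, the run of size $3$ is (b), the run of size $5$ is (d), and the pattern $\{x_i,x_{i+1},x_{i+4}\}$ is (c); the remaining candidates to rule out are single vertices, two adjacent vertices, two vertices at distance $3$, and four consecutive vertices.

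For the bad single-vertex case $N = \{x_i\}$, I would exhibit the induced path $v,x_i,x_{i+1},x_{i+2},x_{i+3},x_{i+4},x_{i+5}$ as an induced $P_7$ (edges come from the hole and from $vx_i$; all other pairs are nonedges because the hole has no chord and $v$ has no other neighbor among the $x_j$'s). For $N = \{x_i,x_{i+1}\}$, the path $v,x_{i+1},x_{i+2},x_{i+3},x_{i+4},x_{i+5},x_{i+6}$ is similarly an induced $P_7$. For the two remaining cases $N = \{x_i,x_{i+3}\}$ and $N = \{x_i,x_{i+1},x_{i+2},x_{i+3}\}$, in both cases $v,x_i,x_{i-1},x_{i-2},x_{i-3},x_{i+3},v$ (reading indices on the side of the cycle not containing an intermediate neighbor of $v$) is an induced $6$-hole, contradicting $C_6$-freeness. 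Since each of these routine verifications just uses the fact that the $x_j$'s form a chordless $7$-cycle and $N$ is exactly the prescribed set, no further obstacle remains and the lemma follows.

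The proof is essentially an exhaustive but short case analysis; the only step requiring any care is the enumeration of the possible $N$ after the $C_4$-freeness reduction, and the main ``obstacle'' is simply bookkeeping the cyclic symmetry so as not to miss or double-count candidate attachment patterns.
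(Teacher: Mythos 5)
Your proof is correct and takes essentially the same approach as the paper: a complete case analysis of the possible attachment patterns of $v$ to the $7$-hole, with the same induced $P_7$'s, $C_4$'s, and $C_6$'s doing the eliminations. The only difference is organizational: the paper classifies the ``sectors'' between consecutive neighbors of $v$ by observing that each sector of length $\ell$ yields an induced cycle of length $\ell+2\in\{3,5,7\}$ and then solves $a+3b+5c=7$, whereas you prune first using only $C_4$-freeness (no gap of size one, hence at most two runs) and then kill the four surviving bad patterns individually.
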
 
\begin{proof} 
To simplify notation, set $X := \{x_0,x_1,x_2,\dots,x_6\}$. If $v$ is anticomplete to $X$, then~(\ref{ref-withC7-attach-ac}) holds, and we are done. Next, if $v$ has exactly one neighbor in $X$, then by symmetry, we may assume that $N_G(v) \cap X = \{x_0\}$, and we see that $v,x_0,x_1,x_2,x_3,x_4,x_5$ is an induced 7-vertex path in $G$, contrary to the fact that $G$ is $P_7$-free. Next, suppose that $v$ has exactly two neighbors in $X$; by symmetry, we may assume that $N_G(v) \cap X = \{x_0,x_k\}$ for some $k \in \{1,2,3\}$. But if $k = 1$, then $v,x_1,x_2,x_3,x_4,x_5,x_6$ is an induced 7-vertex path in $G$; if $k = 2$, then $v,x_0,x_1,x_2,v$ is a 4-hole in $G$; and if $k = 3$, then $v,x_3,x_4,x_5,x_6,x_0,v$ is a 6-hole in $G$. Since $G$ is $(P_7,C_4,C_6)$-free, none of these three outcomes is possible. 

From now on, we may assume that $3 \leq |N_G(v)| \leq 6$. Let us define a {\em sector} to be a nontrivial induced path in the hole $x_0,x_1,\dots,x_6,x_0$ such that $v$ is adjacent to both endpoints of the path and is anticomplete to the interior of the path. The {\em length} of a sector is simply its length as a path (i.e.\ the number of edges that it contains). Note that $v$, together with the vertices of any sector of length $\ell$, induces a cycle of length $\ell+2$ in $G$. Since $G$ is $(P_7,C_4,C_6)$-free, we know that all induced cycles of $G$ are of length 3, 5, or 7; therefore, all sectors are of length 1, 3, or 5. Meanwhile, since every edge of the hole $x_0,x_1,\dots,x_6,x_0$ belongs to exactly one sector, we see that the sum of lengths of all the sectors is 7. So, if the number of sectors of length 1, 3, and 5 is $a$, $b$, and $c$, respectively, then we have that $a+3b+5c = 7$. But the equation $a+3b+5c = 7$ has (exactly) the following four nonnegative integer solutions: 
\begin{enumerate}[(1)] 
\item $a = 2$, $b = 0$, $c = 1$; 
\item $a = 1$, $b = 2$, $c = 0$; 
\item $a = 4$, $b = 1$, $c = 0$; 
\item $a = 7$, $b = 0$, $c = 0$.  
\end{enumerate} 
Clearly, these four solutions correspond (respectively) to the outcomes (\ref{ref-withC7-attach-twin}), (\ref{ref-withC7-attach-Yi}), (\ref{ref-withC7-attach-Zi}), (\ref{ref-withC7-attach-c}) of the lemma. This completes the argument. 
\end{proof}

\begin{lemma} \label{lemma-main-withC7} Let $G$ be a $(P_7,C_4,C_6)$-free graph that contains an induced $C_7$. Then either $G$ admits a special partition, or $G$ admits a clique-cutset. Moreover, if $G$ is additionally $2P_3$-free, then $G$ is a 7-saucer. 
\end{lemma}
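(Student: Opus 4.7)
My plan is to fix an induced 7-hole $H\colon x_0, x_1, \dots, x_6, x_0$ in $G$ and apply Lemma~\ref{lemma-attach-to-C7} to classify every vertex of $V(G) \setminus V(H)$ by its attachment pattern to $V(H)$. This yields the candidate partition $(X_0, \dots, X_6; Y_0, \dots, Y_6; Z_0, \dots, Z_6; W; A)$: $X_i$ holds $x_i$ together with the vertices having attachment $\{x_{i-1}, x_i, x_{i+1}\}$, $Y_i$ holds those with attachment $\{x_i, x_{i+1}, x_{i+4}\}$, $Z_i$ those with attachment $\{x_i, x_{i+1}, x_{i+2}, x_{i+3}, x_{i+4}\}$, while $W$ and $A$ collect the vertices complete and anticomplete to $V(H)$, respectively. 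Each of $X_i, Y_i, Z_i, W$ is a clique by $C_4$-freeness: two nonadjacent vertices of the same attachment type would form an induced $C_4$ with two appropriately chosen hole vertices.

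Next I would verify all the complete/anticomplete relations required by the special partition. The anticomplete parts are immediate from the attachment patterns; the nontrivial completeness claims (for example, $X_i$ complete to $X_{i+1}$, or $X_i$ complete to $Y_i$) are established by showing that a single nonadjacency forces an induced $P_7$ or $C_6$. For instance, if $v \in X_i \setminus \{x_i\}$ and $w \in Y_i$ were nonadjacent, then $v, x_{i+1}, w, x_{i+4}, x_{i+5}, x_{i+6}, v$ would be an induced $C_6$. The remaining completeness clauses (f) and (g) between $Y$'s, $Z$'s, and $W$ are forced by $C_4$-freeness in a similar manner.

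The restrictive clauses (d) and (e), which forbid certain pairs of simultaneously nonempty $Y_i$'s or $Z_j$'s, are then handled by case analysis on representatives $v$ and $v'$. For each forbidden pair I would exhibit an induced $C_4$, $C_6$, or $P_7$, branching on whether $v$ and $v'$ are adjacent. As a sample, for $v \in Y_0$ and $v' \in Y_1$ adjacent, the set $\{v, x_4, x_5, v'\}$ induces a $C_4$; for nonadjacent $v$ and $v'$, the sequence $v', x_2, x_3, x_4, v, x_0, x_6$ is an induced $P_7$. In a small number of pairs no direct forbidden subgraph is available, and a clique-cutset of $G$ must be produced instead. At this point $(X_0, \dots, X_6; Y_0, \dots, Y_6; Z_0, \dots, Z_6; W)$ is a special partition of $G \setminus A$; if $A = \emptyset$ this gives a special partition of $G$, and otherwise I would pick a component $C$ of $G[A]$ and argue that $N_G(V(C)) \subseteq Y \cup Z \cup W$ is a clique. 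For nonadjacent $v, v' \in N_G(V(C))$ with respective neighbors $c, c' \in V(C)$, a short induced path in $G[V(C)]$ between $c$ and $c'$ (or the case $c = c'$) combined with the attachments of $v, v'$ to $V(H)$ yields the desired induced $C_4$, $C_6$, or $P_7$. Since $V(H) \neq \emptyset$ and $V(H) \cap (V(C) \cup N_G(V(C))) = \emptyset$, this $N_G(V(C))$ is the desired clique-cutset.

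For the moreover statement, assume in addition that $G$ is $2P_3$-free. Because $x_0, x_1, x_2$ is a $P_3$ inside $V(H)$ and $V(H)$ is anticomplete to $A$, Proposition~\ref{prop-P3-free} forces every component of $G[A]$ to be $P_3$-free, hence a clique. The nested-neighborhood ordering inside each component of $G[A]$ then follows from Proposition~\ref{prop-C4Free-CoBip} applied to the component (a clique) together with its external neighborhood (a clique, by the previous paragraph). For the remaining 7-saucer condition --- that for each $i$, either $A$ is anticomplete to $Y_i$ or $Z_{i+2} = \emptyset$ --- suppose $a \in A$, $y \in Y_i$, $z \in Z_{i+2}$ with $a$ adjacent to $y$: if $a$ is adjacent to $z$, then $\{a, y, x_{i+4}, z\}$ induces a $C_4$; otherwise, $\{a, y, x_i\}$ together with $\{x_{i+3}, z, x_{i+5}\}$ induces a $2P_3$. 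The main obstacle in the whole argument is the clique-cutset step when $A \neq \emptyset$ (together with the residual cases of clauses (d) and (e)): each sub-case uses only a standard short-cycle or long-path construction, but the number of sub-cases, indexed by the attachment types of $v, v'$ and by the structure of the $V(C)$-path linking them, makes the verification rather laborious.
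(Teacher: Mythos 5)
Your proposal follows the same route as the paper: partition $V(G)\setminus V(H)$ by attachment type using Lemma~\ref{lemma-attach-to-C7}, verify the special-partition axioms for $G\setminus A$ via forbidden induced $C_4$'s, $C_6$'s and $P_7$'s, extract a clique-cutset from a component of $G[A]$ when $A\neq\emptyset$, and upgrade to a 7-saucer under $2P_3$-freeness. The specific configurations you exhibit (the $C_6$ for $X_i$--$Y_i$ completeness, the $C_4$/$P_7$ pair for $Y_0$ versus $Y_1$, the $C_4$ and the $2P_3$ in the ``moreover'' step) all check out and essentially coincide with the paper's.

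One point needs correction, because as stated it would undermine the ``moreover'' part. You write that for ``a small number of pairs'' in clauses (d) and (e) no forbidden subgraph is available and a clique-cutset must be produced instead. In fact every case of (d) and (e) is settled by an induced $C_4$, $C_6$, or $P_7$ (the two nontrivial patterns are consecutive $Y$-indices, handled exactly as your $Y_0,Y_1$ sample, and $Y_i$ versus $Z_{i+5},Z_{i+6}$, handled by a $C_4$ or $C_6$ through $x_{i+4},x_{i+5}$ resp.\ $x_i$ and a sub-path of the hole); no clique-cutset enters at this stage. This matters: the 7-saucer definition requires $(X_0,\dots;W)$ to be a genuine special partition of $G\setminus A$, so if some (d)/(e) case could only be escaped via a clique-cutset, the $2P_3$-free conclusion would not follow. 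The only place the clique-cutset arises is $A\neq\emptyset$, and there your sketch of why $N_G(V(C))$ is a clique omits the one non-routine ingredient: by the already-established structure, two nonadjacent attachments of $C$ must lie in $Y_i$ and $Z_{i+2}$ for some $i$, and a shortest connecting path $b_0,\dots,b_t$ in $C$ must be closed up through $x_{i+2},x_{i+1}$ or through $x_{i+4}$ \emph{according to the parity of $t$}, so as to produce an even hole (hence a $C_4$, $C_6$, or an induced $P_7$ inside a longer even hole). Without choosing the closure by parity, the resulting hole may be odd and yield no contradiction. You should also record explicitly that $A$ is anticomplete to all of $X_0\cup\dots\cup X_6$ (not merely to $V(H)$), which again follows from Lemma~\ref{lemma-attach-to-C7} applied to a hole with $x_i$ replaced by a vertex of $X_i$; this is what justifies $N_G(V(C))\subseteq Y\cup Z\cup W$.
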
 
\begin{proof} 
Let $x_0,x_1,x_2,x_3,x_4,x_5,x_6,x_0$ (with indices in $\mathbb{Z}_7$) be a 7-hole in $G$, and set $\widehat{X} := \{x_0,x_1,x_2,\dots,x_6\}$. Let $W$ be the set of all vertices in $V(G) \setminus \widehat{X}$ that are complete to $\widehat{X}$, let $A$ be the set of all vertices in $V(G) \setminus \widehat{X}$ that are anticomplete to $\widehat{X}$, and for all $i \in \mathbb{Z}_7$, set 
\begin{itemize} 
\item $X_i := \{x_i\} \cup \big\{v \in V(G) \setminus \widehat{X} \mid N_G(v) \cap \widehat{X} = \{x_{i-1},x_i,x_{i+1}\}\big\}$; 
\item $Y_i := \big\{v \in V(G) \setminus \widehat{X} \mid N_G(v) \cap \widehat{X} = \{x_i,x_{i+1},x_{i+4}\}\big\}$; 
\item $Z_i := \big\{v \in V(G) \setminus \widehat{X} \mid N_G(v) \cap \widehat{X} = \{x_i,x_{i+1},x_{i+2},x_{i+3},x_{i+4}\}\big\}$. 
\end{itemize} 
Furthermore, set $X := \bigcup_{i \in \mathbb{Z}_7} X_i$, $Y := \bigcup_{i \in \mathbb{Z}_7} Y_i$, and $Z := \bigcup_{i \in \mathbb{Z}_7} Z_i$. 

By Lemma~\ref{lemma-attach-to-C7}, $(X_0,\dots,X_6;Y_0,\dots,Y_6;Z_0,\dots,Z_6;W;A)$ is a partition of $V(G)$.\footnote{Consequently, sets $X,Y,Z,W,A$ also form a partition of $V(G)$.} Our goal is to prove all the following: 
\begin{itemize} 
\item $(X_0,\dots,X_6;Y_0,\dots,Y_6;Z_0,\dots,Z_6;W)$ is a special partition of $G \setminus A$ (see Claim~\ref{lemma-main-withC7-claim-spec-part}); 
\item if $A \neq \emptyset$, then $G$ admits a clique-cutset (see Claim~\ref{lemma-main-withC7-claim-NGB-clique}); 
\item if $G$ is $2P_3$-free, then $G$ is a 7-saucer (see Claim~\ref{lemma-main-withC7-claim-2P3-free-7-saucer}). 
\end{itemize} 

Our first goal is to show that $(X_0,\dots,X_6;Y_0,\dots,Y_6;Z_0,\dots,Z_6;W)$ is a special partition of $G \setminus A$. The fact that $X_0,\dots,X_6$ are all nonempty follows from the construction. Further, all the following hold: 
\begin{itemize} 
\item for all $i \in \mathbb{Z}_7$, distinct, nonadjacent vertices $x_{i-1},x_{i+1}$ are complete to $X_i$; 
\item for all $i \in \mathbb{Z}_7$, distinct, nonadjacent vertices $x_i,x_{i+4}$ are complete to $Y_i$; 
\item for all $i \in \mathbb{Z}_7$, distinct, nonadjacent vertices $x_i,x_{i+2}$ are complete to $Z_i$; 
\item distinct, nonadjacent vertices $x_0,x_2$ are complete to $W$. 
\end{itemize} 
In view of Proposition~\ref{prop-non-adj-comp-clique}, this implies that $X_0,\dots,X_6,Y_0,\dots,Y_6,Z_0,\dots,Z_6,W$ are all cliques. It remains to check that items (b)-(g) from the definition of a special partition are satisfied. We prove this via a sequence of claims (see Claims~\ref{lemma-main-withC7-claim-Xi-hyperhole}-\ref{lemma-main-withC7-claim-Zi-attachment} below). 

\begin{adjustwidth}{1cm}{1cm}
\begin{claim} \label{lemma-main-withC7-claim-Xi-hyperhole} For all $i \in \mathbb{Z}_7$, $X_i$ is complete to $X_{i-1},X_{i+1}$ and anticomplete to $X_{i+2},X_{i+3},X_{i+4},X_{i+5}$. 
\end{claim} 
\end{adjustwidth}
\noindent 
{\em Proof of Claim~\ref{lemma-main-withC7-claim-Xi-hyperhole}.} By symmetry, it suffices show that $X_0$ is complete to $X_1$ and anticomplete to $X_2,X_3$. 

If some $x_0' \in X_0$ and $x_1' \in X_1$ are nonadjacent, then $x_1',x_2,x_3,x_4,x_5,x_6,x_0'$ is an induced 7-vertex path in $G$, contrary to the fact that $G$ is $P_7$-free. So, $X_0$ is complete to $X_1$. 

If some $x_0' \in X_0$ is adjacent to some $x_2' \in X_2$, then $x_0',x_2',x_3,x_4,x_5,x_6,x_0'$ is a 6-hole in $G$, contrary to the fact that $G$ is $C_6$-free. Thus, $X_0$ is anticomplete to $X_2$. 

If some $x_0' \in X_0$ is adjacent to some $x_3' \in X_3$, then $x_0',x_1,x_2,x_3',x_0'$ is a 4-hole in $G$, contrary to the fact that $G$ is $C_4$-free. Thus, $X_0$ is anticomplete to $X_3$.~$\blacklozenge$ 

\begin{adjustwidth}{1cm}{1cm}
\begin{claim} \label{lemma-main-withC7-claim-Xi-attachment} For all $i \in \mathbb{Z}_7$, $X_i$ is complete to $Y_i,Y_{i+3},Y_{i+6},Z_i,Z_{i+3},Z_{i+4},Z_{i+5},Z_{i+6},W$ and anticomplete to $Y_{i+1},Y_{i+2},Y_{i+4},Y_{i+5},Z_{i+1},Z_{i+2}$. 
\end{claim} 
\end{adjustwidth} 
\noindent 
{\em Proof of Claim~\ref{lemma-main-withC7-claim-Xi-attachment}.} By symmetry, it suffices to prove the claim for $i = 0$. If some $x_0'$ has a nonneighbor $v \in Y_0 \cup Y_3 \cup Y_6 \cup Z_0 \cup Z_3 \cup Z_4 \cup Z_5 \cup Z_6 \cup W$, then $v$ has either exactly two, or exactly four, or exactly six neighbors in the 7-hole $x_0',x_1,x_2,x_3,x_4,x_5,x_6,x_0'$, contrary to Lemma~\ref{lemma-attach-to-C7}. Thus, $X_0$ is complete to $Y_0,Y_3,Y_6,Z_0,Z_3,Z_4,Z_5,Z_6,W$. 

Similarly, if some $x_0' \in X_0$ has a neighbor $v \in Y_1 \cup Y_2 \cup Y_4 \cup Y_5 \cup Z_1 \cup Z_2$, then $v$ has either exactly four or exactly six neighbors in the 7-hole $x_0',x_1,x_2,x_3,x_4,x_5,x_6,x_0'$, contrary to Lemma~\ref{lemma-attach-to-C7}. Thus, $X_0$ is anticomplete to $Y_1,Y_2,Y_4,Y_5,Z_1,Z_2$.~$\blacklozenge$ 

\begin{adjustwidth}{1cm}{1cm}
\begin{claim} \label{lemma-main-withC7-claim-if-Yi-nonempty}
For all $i \in \mathbb{Z}_7$, if $Y_i \neq \emptyset$, then $Y_{i+1},Y_{i+2},Y_{i+5},Y_{i+6},Z_{i+5},Z_{i+6}$ are all empty, and at most one of $Y_{i+3},Y_{i+4}$ is nonempty. 
\end{claim} 
\end{adjustwidth} 
\noindent 
{\em Proof of Claim~\ref{lemma-main-withC7-claim-if-Yi-nonempty}.} By symmetry, it suffices to prove the claim for $i = 0$. So, assume that $Y_0 \neq \emptyset$. We must show that $Y_1,Y_2,Y_5,Y_6,Z_5,Z_6$ are all empty, and that at most one of $Y_3,Y_4$ is nonempty. Fix $y_0 \in Y_0$. 

Suppose that $Y_1 \neq \emptyset$, and fix $y_1 \in Y_1$. If $y_0$ is adjacent to $y_1$, then $y_0,y_1,x_5,x_4,y_0$ is a 4-hole in $G$, contrary to the fact that $G$ is $C_4$-free. On the other hand, if $y_0$ is nonadjacent to $y_1$, then $y_1,x_2,x_3,x_4,y_0,x_0,x_6$ is an induced 7-vertex path in $G$, contrary to the fact that $G$ is $P_7$-free. This proves that $Y_1 = \emptyset$. An analogous argument establishes that $Y_6 = \emptyset$. 

Next, suppose that $Y_2 \neq \emptyset$, and fix $y_2 \in Y_2$. If $y_0$ is adjacent to $y_2$, then $y_0,x_1,x_2,y_2,y_0$ is a 4-hole in $G$, contrary to the fact that $G$ is $C_4$-free. On the other hand, if $y_0$ is nonadjacent to $y_2$, then $y_0,x_0,x_6,y_2,x_3,x_4,y_0$ is a 6-hole in $G$, contrary to the fact that $G$ is $C_6$-free. This proves that $Y_2 = \emptyset$. An analogous argument establishes that $Y_5 = \emptyset$. 

Next, suppose that $Z_5 \neq \emptyset$, and fix $z_5 \in Z_5$. If $y_0$ is adjacent to $z_5$, then $y_0,x_4,x_5,z_5,y_0$ is a 4-hole in $G$, contrary to the fact that $G$ is $C_4$-free. On the other hand, if $y_0$ is nonadjacent to $z_5$, then $y_0,x_0,z_5,x_2,x_3,x_4,y_0$ is a 6-hole in $G$, contrary to the fact that $G$ is $C_6$-free. This proves that $Z_5 = \emptyset$. An analogous argument establishes that $Z_6 = \emptyset$. 

It remains to show that at most one of $Y_3,Y_4$ is nonempty. But the proof of this is analogous to the above proof of the fact that, if $Y_0 \neq \emptyset$, then $Y_1 = \emptyset$.~$\blacklozenge$ 

\begin{adjustwidth}{1cm}{1cm}
\begin{claim} \label{lemma-main-withC7-claim-if-Zi-nonempty} For all $i \in \mathbb{Z}_7$, if $Z_i \neq \emptyset$, then $Z_{i+2},Z_{i+5}$ are empty. 
\end{claim} 
\end{adjustwidth} 
\noindent 
{\em Proof of Claim~\ref{lemma-main-withC7-claim-if-Zi-nonempty}.} By symmetry, it suffices to prove the claim for $i = 0$. So, assume that $Z_0 \neq \emptyset$, and fix $z_0 \in Z_0$. Suppose that $Z_2 \neq \emptyset$, and fix $z_2 \in Z_2$. If $z_0$ is adjacent to $z_2$, then $z_0,z_2,x_6,x_0,z_0$ is a 4-hole in $G$, contrary to the fact that $G$ is $C_4$-free. On the other hand, if $z_0$ is nonadjacent to $z_2$, then $z_0,x_2,z_2,x_4,z_0$ is a 4-hole in $G$, contrary to the fact that $G$ is $C_4$-free. This proves that $Z_2 = \emptyset$. An analogous argument establishes that $Z_5 = \emptyset$.~$\blacklozenge$

\begin{adjustwidth}{1cm}{1cm}
\begin{claim} \label{lemma-main-withC7-claim-Yi-attachment} 
For all $i \in \mathbb{Z}_7$, $Y_i$ is complete to $Y_{i+3},Y_{i+4},Z_i,Z_{i+1},Z_{i+3},Z_{i+4},W$ and anticomplete to $Z_{i+2}$. 
\end{claim} 
\end{adjustwidth} 
\noindent 
{\em Proof of Claim~\ref{lemma-main-withC7-claim-Yi-attachment}.} By symmetry, it suffices to prove the claim for $i = 0$. Note that: 
\begin{itemize} 
\item distinct, nonadjacent vertices $x_0,x_4$ are complete to $Y_0 \cup Y_3 \cup Z_0 \cup Z_3 \cup Z_4 \cup W$; 
\item distinct, nonadjacent vertices $x_1,x_4$ are complete to $Y_0 \cup Y_4 \cup Z_1$. 
\end{itemize} 
So, by Proposition~\ref{prop-non-adj-comp-clique}, $Y_0 \cup Y_3 \cup Z_0 \cup Z_3 \cup Z_4 \cup W$ and $Y_0 \cup Y_4 \cup Z_1$ are both cliques, and it follows that $Y_0$ is complete to $Y_3,Y_4,Z_0,Z_1,Z_3,Z_4,W$. On the other hand, if some $y_0 \in Y_0$ and $z_2 \in Z_2$ are adjacent, then $y_0,z_2,x_2,x_1,y_0$ is a 4-hole in $G$, contrary to the fact that $G$ is $C_4$-free. It follows that $Y_0$ is anticomplete to $Z_2$.~$\blacklozenge$

\begin{adjustwidth}{1cm}{1cm}
\begin{claim} \label{lemma-main-withC7-claim-Zi-attachment} For all $i \in \mathbb{Z}_7$, $Z_i$ is complete to $Z_{i+1},Z_{i+3},Z_{i+4},Z_{i+6},W$. 
\end{claim} 
\end{adjustwidth} 
\noindent 
{\em Proof of Claim~\ref{lemma-main-withC7-claim-Zi-attachment}.}  By symmetry, it suffices to prove the claim for $i = 0$. Note that: 
\begin{itemize} 
\item distinct, nonadjacent vertices $x_1,x_4$ are complete to $Z_0 \cup Z_1 \cup Z_4 \cup W$; 
\item distinct, nonadjacent vertices $x_0,x_3$ are complete to $Z_0 \cup Z_3 \cup Z_6$. 
\end{itemize} 
So, by Proposition~\ref{prop-non-adj-comp-clique}, $Z_0 \cup Z_1 \cup Z_4 \cup W$ and $Z_0 \cup Z_3 \cup Z_6$ are both cliques, and it follows that $Z_0$ is complete to $Z_1,Z_3,Z_4,Z_6,W$.~$\blacklozenge$

\begin{adjustwidth}{1cm}{1cm} 
\begin{claim} \label{lemma-main-withC7-claim-spec-part} 
$(X_0,\dots,X_6;Y_0,\dots,Y_6;Z_0,\dots,Z_6;W)$ is a special partition of $G \setminus A$. 
\end{claim}  
\end{adjustwidth} 
{\em Proof of Claim~\ref{lemma-main-withC7-claim-spec-part}.} Recall  that sets $X_0,\dots,X_6,Y_0,\dots,Y_6,Z_0,\dots,Z_6,W$ are cliques that together partition $V(G) \setminus A$, and that cliques $X_0,\dots,X_6$ are all nonempty.\footnote{Cliques $Y_0,\dots,Y_6,Z_0,\dots,Z_6,W$ may possibly be empty.} The result now follows immediately from the definition of a special partition, and from Claims~\ref{lemma-main-withC7-claim-Xi-hyperhole}-\ref{lemma-main-withC7-claim-Zi-attachment}.~$\blacklozenge$

\begin{adjustwidth}{1cm}{1cm}
\begin{claim} \label{lemma-main-withC7-claim-AX-anticomp} $A$ is anticomplete to $X$, and consequently, $N_G(A) \subseteq Y \cup Z \cup W$. 
\end{claim} 
\end{adjustwidth} 
{\em Proof of Claim~\ref{lemma-main-withC7-claim-AX-anticomp}.} Since $V(G) = X \cup Y \cup Z \cup W \cup A$, it suffices to show that $A$ is anticomplete to $X$. Suppose otherwise. By symmetry, we may assume that some $a \in A$ and $x_0' \in X_0$ are adjacent. But then $a$ has exactly one neighbor in the 7-hole $x_0',x_1,x_2,x_3,x_4,x_5,x_6,x_0'$, contrary to Lemma~\ref{lemma-attach-to-C7}.~$\blacklozenge$

\begin{adjustwidth}{1cm}{1cm} 
\begin{claim} \label{lemma-main-withC7-claim-NGB-clique} 
If $A \neq \emptyset$, and if $B$ is the vertex set of a component of $G[A]$, then $N_G(B)$ is a clique-cutset of $G$. 
\end{claim} 
\end{adjustwidth} 
{\em Proof of Claim~\ref{lemma-main-withC7-claim-NGB-clique}.} Assume that $A \neq \emptyset$, and let $B$ be the vertex set of some component of $G[A]$. By Claim~\ref{lemma-main-withC7-claim-AX-anticomp}, $B$ is anticomplete to $X$, and we have that $N_G(B) \subseteq Y \cup Z \cup W$. In particular, $N_G(B)$ is a cutset of $G$ that separates $B$ from $X$ in $G$. It remains to show that $N_G(B)$ is a clique. Suppose otherwise. It then follows from the definition of a special partition (or alternatively, from Lemma~\ref{lemma-YiZi-in-special-part}) that there exists some $i \in \mathbb{Z}_7$ such that $N_G(B)$ intersects both $Y_i$ and $Z_{i+2}$. By symmetry, we may assume that $N_G(B)$ intersects both $Y_0$ and $Z_2$. Fix vertices $y_0 \in Y_0$ and $z_2 \in Z_2$ that both have a neighbor in $B$. Since $G[B]$ is connected, there exists an induced path $b_0,\dots,b_t$ ($t \geq 0$) in $B$ such that $b_0$ is adjacent to $y_0$, and $b_t$ is adjacent to $z_2$; we may assume that $b_0,\dots,b_t$ is the shortest path with this property, so that $y_0$ is anticomplete to $\{b_1,\dots,b_t\}$, and $z_2$ is anticomplete to $\{b_0,\dots,b_{t-1}\}$. Clearly, $\{b_0,\dots,b_t\}$ is anticomplete to $X$ (because $B$ is anticomplete to $X$). But now if $t$ is odd, then $y_0,b_0,\dots,b_t,z_2,x_2,x_1,y_0$ is an even hole in $G$; and if $t$ is even, then $y_0,b_0,\dots,b_t,z_2,x_4,y_0$ is an even hole in $G$. In either case, $G$ contains an even hole, contrary to the fact that $G$ is $(P_7,C_4,C_6)$-free, and therefore even-hole-free.~$\blacklozenge$

\begin{adjustwidth}{1cm}{1cm} 
\begin{claim} \label{lemma-main-withC7-claim-2P3-free-7-saucer} 
If $G$ is $2P_3$-free, then $G$ is a 7-saucer with an associated 7-saucer partition $(X_0,\dots,X_6;Y_0,\dots,Y_6;Z_0,\dots,Z_6;W;A)$. 
\end{claim} 
\end{adjustwidth} 
{\em Proof of Claim~\ref{lemma-main-withC7-claim-2P3-free-7-saucer}.} Assume that $G$ is $2P_3$-free. First of all, by Claim~\ref{lemma-main-withC7-claim-spec-part}, we know that $(X_0,\dots,X_6;Y_0,\dots,Y_6;Z_0,\dots,Z_6;W)$ is a special partition of $G \setminus A$. So, if $A = \emptyset$, then $G$ is a 7-saucer with an associated 7-saucer partition $(X_0,\dots,X_6;Y_0,\dots,Y_6;Z_0,\dots,Z_6;W;A)$, and we are done. We may therefore assume that $A \neq \emptyset$. By Claim~\ref{lemma-main-withC7-claim-AX-anticomp}, we know that $N_G(A) \subseteq Y \cup Z \cup W$. 

First, we show that for all $i \in \mathbb{Z}_7$, either $A$ is anticomplete to $Y_i$, or $Z_{i+2} = \emptyset$. Suppose otherwise. By symmetry, we may assume that $A$ is not anticomplete to $Y_0$ and that $Z_2 \neq \emptyset$. Fix adjacent vertices $a \in A$ and $y_0 \in Y_0$, and fix any $z_2 \in Z_2$. Since $(X_0,\dots,X_6;Y_0,\dots,Y_6;Z_0,\dots,Z_6;W)$ is a special partition of $G \setminus A$, we know that $y_0$ and $z_2$ are nonadjacent, and so by Claim~\ref{lemma-main-withC7-claim-NGB-clique}, $a$ is nonadjacent to $z_2$.\footnote{Indeed, let $B$ be the vertex set of a component of $G[A]$ such that $a \in B$. By Claim~\ref{lemma-main-withC7-claim-NGB-clique}, $N_G(B)$ is a clique, and so since $y_0,z_2$ are nonadjacent, $N_G(B)$ may contain at most one of them. So, $a$ can be adjacent to at most one of $y_0,z_2$. Since $a$ is in fact adjacent to $y_0$, it follows that $a$ is nonadjacent to $z_2$.} But now $G[a,y_0,x_0,x_2,z_2,x_5]$ is a $2P_3$, a contradiction. 

Next, if $a_0,a_1,a_2$ is an induced two-edge path in $G[A]$, then $G[a_0,a_1,a_2,x_0,x_1,x_2]$ is a $2P_3$, a contradiction. This proves that $G[A]$ is $P_3$-free. Now Lemma~\ref{prop-P3-free} guarantees that $A$ can be partitioned into nonempty cliques, say $A_1,\dots,A_{\ell}$, pairwise anticomplete to each other. 

Finally, fix any $i \in \{1,\dots,\ell\}$. By Claim~\ref{lemma-main-withC7-claim-NGB-clique}, $N_G(A_i)$ is a clique. Now Proposition~\ref{prop-C4Free-CoBip} guarantees that 
$A_i$ can be ordered as $A_i = \{a_1^i,\dots,a_{r_i}^i\}$ so that $N_G[a_{r_i}^i] \subseteq \dots \subseteq N_G[a_1^i]$.\footnote{Indeed, Proposition~\ref{prop-C4Free-CoBip}(b) applied to the cliques $A_i$ and $N_G(A_i)$ guarantees that $A_i$ can be ordered as $A_i = \{a_1^i,\dots,a_{r_i}^i\}$ so that $N_G(a_{r_i}^i) \cap N_G(A_i) \subseteq \dots \subseteq N_G(a_1^i) \cap N_G(A_i)$. Since $A_i$ is a clique, this implies $N_G[a_{r_i}^i] \subseteq \dots \subseteq N_G[a_1^i]$.} 

We have now shown that $G$ is a 7-saucer, and that $(X_0,\dots,X_6;Y_0,\dots,Y_6;Z_0,\dots,Z_6;W;A)$ is an associated 7-saucer partition of $G$.~$\blacklozenge$ 

\medskip 

We are now done: Claims~\ref{lemma-main-withC7-claim-spec-part} and~\ref{lemma-main-withC7-claim-NGB-clique} together guarantee that $G$ admits either a special partition or a clique-cutset, whereas Claim~\ref{lemma-main-withC7-claim-2P3-free-7-saucer} guarantees that if $G$ is $2P_3$-free, then $G$ is a 7-saucer. 
\end{proof}

\begin{theorem} \label{thm-7-saucer} For any graph $G$, the following are equivalent: 
\begin{itemize} 
\item $G$ is $(2P_3,C_4,C_6)$-free and contains an induced $C_7$; 
\item $G$ is a 7-saucer. 
\end{itemize} 
\end{theorem}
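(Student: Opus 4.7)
The theorem is an equivalence, and both directions are essentially already in hand from the earlier results of the section, so my plan is to package them appropriately. The forward direction (from 7-saucer to $(2P_3,C_4,C_6)$-free with an induced $C_7$) is exactly the content of Proposition~\ref{prop-7-saucer-in-class}, so I would simply cite it. For the converse, I would reduce to Lemma~\ref{lemma-main-withC7}, whose conclusion already yields a 7-saucer under the additional hypothesis that $G$ is $2P_3$-free.

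The small observation that makes the reduction work is that the class $(2P_3,C_4,C_6)$-free is a subclass of $(P_7,C_4,C_6)$-free. Concretely, in $P_7 = v_1 v_2 \ldots v_7$, the vertex subset $\{v_1,v_2,v_3,v_5,v_6,v_7\}$ induces a $2P_3$ (the two $P_3$'s are $v_1 v_2 v_3$ and $v_5 v_6 v_7$, and they are anticomplete since $v_3$ is nonadjacent to $v_5$ in $P_7$). Hence $P_7$ contains an induced $2P_3$, and consequently every $2P_3$-free graph is $P_7$-free.

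With that in place, the backward direction is straightforward: if $G$ is $(2P_3,C_4,C_6)$-free and contains an induced $C_7$, then by the observation above, $G$ is in particular $(P_7,C_4,C_6)$-free and contains an induced $C_7$, so Lemma~\ref{lemma-main-withC7} applies. The ``moreover'' clause of that lemma, applied to our $2P_3$-free graph $G$, directly yields that $G$ is a 7-saucer. This completes the equivalence.

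There is no real obstacle here; the work has been done in Proposition~\ref{prop-7-saucer-in-class} (via Lemma~\ref{lemma-spec-part-4K1C4C6FreeWithC7} and the careful handling of $A$ using the nested closed-neighborhood ordering of each clique $A_i$) and in Lemma~\ref{lemma-main-withC7} (via the attachment case analysis of Lemma~\ref{lemma-attach-to-C7} together with the even-hole arguments establishing that $N_G(B)$ is a clique for every component $B$ of $G[A]$). The only thing the proof of Theorem~\ref{thm-7-saucer} needs to do is to invoke these two results and note the simple fact that $2P_3 \subseteq_{\text{ind}} P_7$.
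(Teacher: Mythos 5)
Your proposal is correct and matches the paper's proof exactly: the paper derives Theorem~\ref{thm-7-saucer} immediately from Proposition~\ref{prop-7-saucer-in-class} and Lemma~\ref{lemma-main-withC7}, with the same implicit use of the fact that $2P_3$ is an induced subgraph of $P_7$. Your explicit verification of that containment is a welcome (and accurate) spelling-out of a detail the paper leaves to the reader.
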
 
\begin{proof} 
This follows immediately from Proposition~\ref{prop-7-saucer-in-class} and Lemma~\ref{lemma-main-withC7}. 
\end{proof} 

\begin{lemma} \label{lemma-4K1FreeWithC7-no-clique-cut} If a $4K_1$-free graph $G$ contains an induced $C_7$, then $G$ does not admit a clique-cutset. 
\end{lemma}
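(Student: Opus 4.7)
The plan is to argue by contradiction. Suppose $G$ is a $4K_1$-free graph containing a $7$-hole $x_0,x_1,\dots,x_6,x_0$ (indices in $\mathbb{Z}_7$), and suppose that $G$ admits a clique-cutset $C$. I will exhibit an induced $4K_1$ in $G$, contradicting the hypothesis.

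First, I would observe that $C \cap \{x_0,\dots,x_6\}$ is a clique of $G[x_0,\dots,x_6] \cong C_7$. Since every triple of vertices of $C_7$ contains a nonadjacent pair, $C$ meets the hole in at most two vertices, and if in two then in two consecutive vertices $x_j,x_{j+1}$. In particular, $\{x_0,\dots,x_6\} \setminus C$ consists either of all $7$ hole vertices, or of $6$ hole vertices forming an induced $P_6$, or of $5$ consecutive hole vertices forming an induced $P_5$. In each case this set is connected in $G \setminus C$, so it is entirely contained in the vertex set of a single component $A$ of $G \setminus C$.

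Next, since $C$ is a cutset, there is another component of $G \setminus C$; pick any vertex $b$ in it. Then $b$ is nonadjacent to every vertex of $A$, and in particular to every hole vertex outside $C$. The key combinatorial point is that among any $5$ consecutive vertices of $C_7$ there is an independent set of size $3$ (for instance, in the case $C \cap \{x_0,\dots,x_6\} = \{x_0,x_1\}$, the set $\{x_2,x_4,x_6\}$ is independent; the cases where $C$ meets the hole in $0$ or $1$ vertices are analogous, using $\{x_0,x_2,x_4\}$ or $\{x_1,x_3,x_5\}$ respectively). Picking such a triple $\{y_1,y_2,y_3\} \subseteq \{x_0,\dots,x_6\} \setminus C \subseteq V(A)$, we obtain that $\{b,y_1,y_2,y_3\}$ induces a $4K_1$ in $G$, a contradiction.

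There is essentially no obstacle here beyond a small case analysis on $|C \cap \{x_0,\dots,x_6\}| \in \{0,1,2\}$; the whole argument fits in a few lines and does not need any of the more elaborate machinery (special partitions, $7$-saucers, etc.) developed earlier in the section.
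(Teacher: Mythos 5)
Your proof is correct and follows essentially the same route as the paper's: both arguments show that all hole vertices outside the clique-cutset lie in a single component (the paper via ``$C_7$ has no clique-cutset,'' you via $|C\cap X|\le 2$ with the remaining vertices forming a connected path), extract three pairwise nonadjacent hole vertices from the at least five that remain, and combine them with a vertex of another component to produce an induced $4K_1$.
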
 
\begin{proof} 
Fix a $4K_1$-free graph $G$ that contains a 7-hole (say, $x_0,x_1,x_2,\dots,x_6,x_0$), and suppose that $G$ admits a clique-cutset $C$. Let $(A,B)$ be a partition of $V(G) \setminus C$ into nonempty sets, anticomplete to each other. Since $C_7$ does not admit a clique-cutset, we see that $X := \{x_0,x_1,\dots,x_6\}$ intersects at most one of $A$ and $B$; by symmetry, we may assume that $X \cap B = \emptyset$, and consequently, $X \subseteq A \cup C$. But since $C$ is a clique, we see that $G[X \setminus C]$ contains an induced $P_5$, and consequently, an induced $3K_1$. The three vertices of this $3K_1$ together with any vertex of $B$ induce a $4K_1$ in $G$, a contradiction. 
\end{proof}

\begin{theorem} \label{thm-main-withC7} For any graph $G$, the following are equivalent: 
\begin{enumerate}[(a)] 
\item $G$ is a $(P_7,C_4,C_6)$-free graph that contains an induced $C_7$ and does not admit a clique-cutset; 
\item $G$ is a $(2P_3,C_4,C_6)$-free graph that contains an induced $C_7$ and contains no simplicial vertices; 
\item $G$ is a $(4K_1,C_4,C_6)$-free graph that contains an induced $C_7$; 
\item $G$ admits a special partition; 
\item $G$ has exactly one nonetrivial anticomponent, and this anticomponent is a thickening of a graph in $\mathcal{M}$; 
\item $G$ can be obtained from a thickening of a graph in $\mathcal{M}$ by possibly adding universal vertices to it. 
\end{enumerate} 
\end{theorem}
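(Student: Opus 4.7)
The plan is to combine results already established in the paper into a cyclic chain of implications. The equivalence of (d), (e), and (f) is exactly the content of Proposition~\ref{prop-spec-part-thickening-MM}, so nothing more is needed on that front. To link (a), (b), (c), and (d), I would set up the cycle
\[
\text{(a)} \Rightarrow \text{(d)} \Rightarrow \text{(c)} \Rightarrow \text{(b)} \Rightarrow \text{(a)},
\]
each step of which reduces to a lemma or proposition already proved.

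For (a) $\Rightarrow$ (d), I would simply invoke Lemma~\ref{lemma-main-withC7}, which says that a $(P_7,C_4,C_6)$-free graph containing an induced $C_7$ either admits a special partition or admits a clique-cutset; assumption (a) rules out the second alternative, leaving (d). For (d) $\Rightarrow$ (c), I would quote Lemma~\ref{lemma-spec-part-4K1C4C6FreeWithC7} verbatim: any graph admitting a special partition is $(4K_1,C_4,C_6)$-free and contains an induced $C_7$.

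For (c) $\Rightarrow$ (b), the key observations are that $4K_1$ is an induced subgraph of $2P_3$ (take the two endpoints of each $P_3$), so $4K_1$-freeness gives $2P_3$-freeness, and that by Lemma~\ref{lemma-4K1FreeWithC7-no-clique-cut} any $4K_1$-free graph containing $C_7$ admits no clique-cutset. Since $G$ contains $C_7$, it is not complete, so a simplicial vertex would yield a clique-cutset $N_G(v)$, contradicting the previous sentence. Hence no simplicial vertex, as required. For (b) $\Rightarrow$ (a), I would note that $2P_3$ is an induced subgraph of $P_7$ (take $v_1,v_2,v_3$ and $v_5,v_6,v_7$ in $P_7 = v_1v_2\ldots v_7$), so $2P_3$-freeness gives $P_7$-freeness; and if $G$ admitted a clique-cutset, Proposition~\ref{prop-2P3C4-free-clique-cut-simplicial} would produce a simplicial vertex, contradicting (b).

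There is essentially no obstacle at this stage of the paper: the entire substance of the theorem lives in Lemma~\ref{lemma-main-withC7} (for the structural implication (a) $\Rightarrow$ (d)) and in Lemma~\ref{lemma-4K1FreeWithC7-no-clique-cut} (for ruling out clique-cutsets in the $4K_1$-free setting). The only point worth double-checking is that the ``simplicial vs.\ clique-cutset'' bridge works in both directions, which it does: Proposition~\ref{prop-2P3C4-free-clique-cut-simplicial} converts a clique-cutset to a simplicial vertex in the $(2P_3,C_4)$-free setting, while the observation that a non-complete graph with a simplicial vertex has the neighborhood of that vertex as a clique-cutset handles the reverse direction needed in (c) $\Rightarrow$ (b).
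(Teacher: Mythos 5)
Your proposal is correct and follows exactly the same chain of implications as the paper's proof: (d)$\Leftrightarrow$(e)$\Leftrightarrow$(f) via Proposition~\ref{prop-spec-part-thickening-MM}, then (a)$\Rightarrow$(d)$\Rightarrow$(c)$\Rightarrow$(b)$\Rightarrow$(a) using Lemma~\ref{lemma-main-withC7}, Lemma~\ref{lemma-spec-part-4K1C4C6FreeWithC7}, Lemma~\ref{lemma-4K1FreeWithC7-no-clique-cut}, and Proposition~\ref{prop-2P3C4-free-clique-cut-simplicial}, respectively. The auxiliary observations (the induced-subgraph containments $4K_1 \subseteq 2P_3 \subseteq P_7$ and the simplicial-vertex/clique-cutset bridge in a non-complete graph) are also the ones the paper uses.
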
 
\begin{proof} 
By Proposition~\ref{prop-spec-part-thickening-MM}, (d),~(e), and~(f) are equivalent. By Lemma~\ref{lemma-main-withC7}, (a) implies (d). Further, by Lemma~\ref{lemma-spec-part-4K1C4C6FreeWithC7}, (d) implies (c). It remains to show that (c) implies (b), and that (b) implies (a). 

We first assume (c) and prove (b). Since $G$ is $(4K_1,C_4,C_6)$-free, and since $4K_1$ is an induced subgraph of $2P_3$, we know that $G$ is $(2P_3,C_4,C_6)$-free. Moreover, by Lemma~\ref{lemma-4K1FreeWithC7-no-clique-cut}, $G$ does not admit a clique-cutset; since the graph $G$ is not complete (because it contains an induced $C_7$), it follows that $G$ contains no simplicial vertices.\footnote{This is because the neighborhood of any simplicial vertex in a graph that is not complete is a clique-cutset of that graph.} This proves~(b). 
 
To complete the proof, it remains to show that (b) implies (a). Since $2P_3$ is an induced subgraph of $P_7$, it is clear that any $(2P_3,C_4,C_6)$-free graph that contains an induced $C_7$ is, in particular, a $(P_7,C_4,C_6)$-free graph that contains an induced $C_7$. This, together with Proposition~\ref{prop-2P3C4-free-clique-cut-simplicial}, guarantees that (b) implies (a). 
\end{proof}

\section{$\boldsymbol{(2P_3,C_4,C_6,C_7)}$-free graphs that contain an induced $\boldsymbol{T_0}$} \label{sec:withT0}

In this section, we give a full structural description of all $(2P_3,C_4,C_6,C_7)$-free graphs that contain an induced $T_0$ (see Theorem~\ref{thm-T0-tent}). As an easy corollary, we obtain a full structural description of $(2P_3,C_4,C_6,C_7)$-free graphs that contain an induced $T_0$ and contain no simplicial vertices (see Corollary~\ref{cor-T0-tent}). 

We begin with some definitions. First of all, the {\em 3-pentagon}, as well as graphs $T_0$ and $T_1$, are represented in Figure~\ref{fig:3pentagonT0T1}. Note that the 3-pentagon is an induced subgraph of $T_0$, and $T_0$ is an induced subgraph of $T_1$. Next, recall that a graph is {\em chordal} if it contains no holes. A {\em simplicial elimination ordering} of a graph $G$ is an ordering $v_1,\dots,v_t$ of the vertices of $G$ such that for all $i \in \{1,\dots,t\}$, the vertex $v_i$ is simplicial in the graph $G \setminus \{v_1,\dots,v_{i-1}\}$. It is well known that a graph is chordal if and only if it admits a simplicial elimination ordering~\cite{FulkersonGross}. 

\begin{figure}
\begin{center}
\includegraphics[scale=0.5]{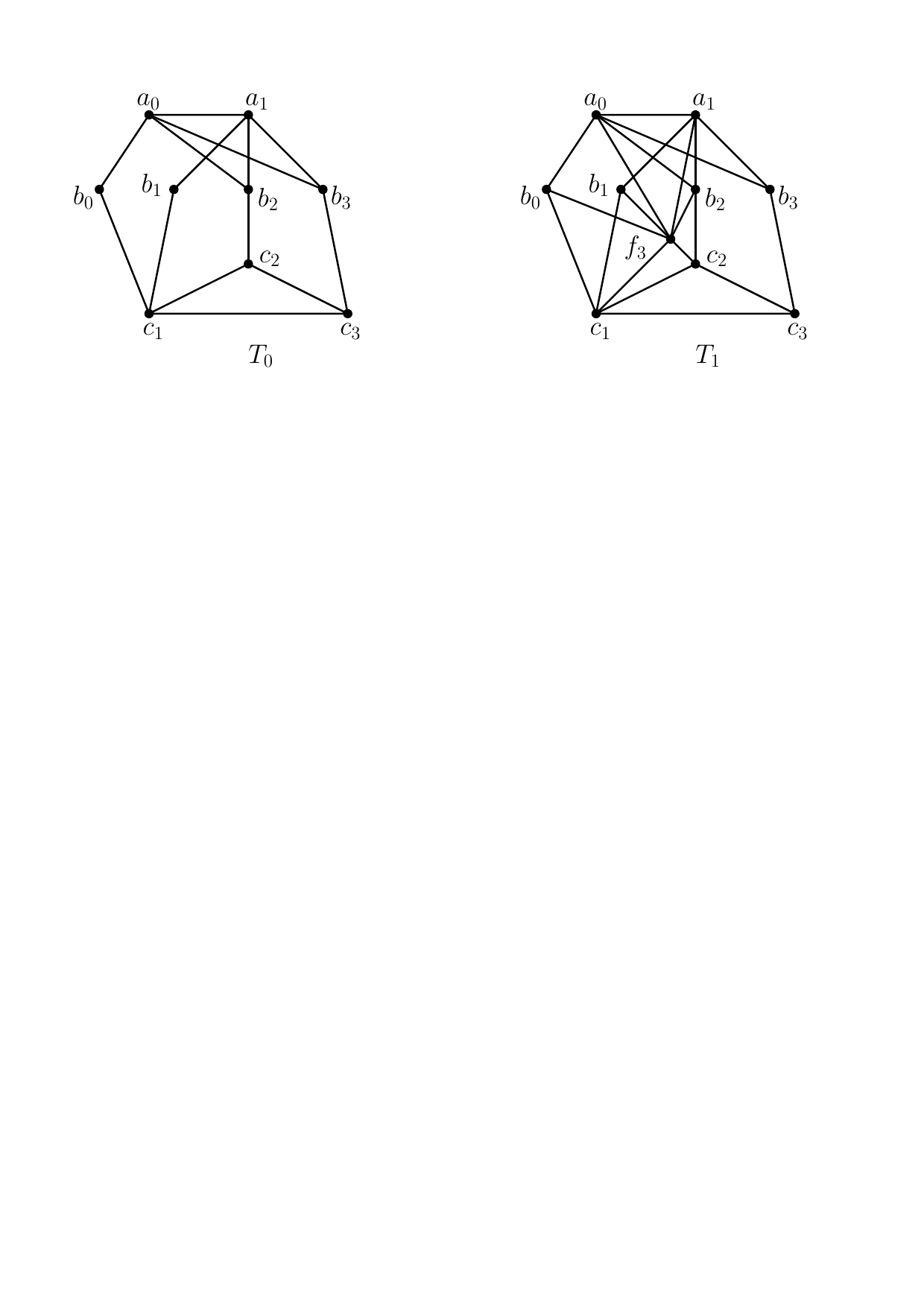}
\end{center} 
\caption{Graphs $T_0$ (left) and $T_1$ (right).} \label{fig:T0T1-labeled} 
\end{figure} 

\begin{proposition} \label{prop-T0T1-2P3C4C6Free} $T_0$ and $T_1$ are both anticonnected, contain no simplicial vertices and no universal vertices, and are $(2P_3,C_4,C_6,C_7)$-free. Consequently, any thickening of $T_0$ or $T_1$ is anticonnected, contains no simplicial and no universal vertices, and is $(2P_3,C_4,C_6,C_7)$-free. 
\end{proposition}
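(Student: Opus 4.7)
The plan is to verify the four properties (anticonnectedness, no simplicial vertices, no universal vertices, $(2P_3,C_4,C_6,C_7)$-freeness) for $T_0$ and $T_1$ directly, using the vertex labels supplied by Figure~\ref{fig:T0T1-labeled}, and then to invoke Proposition~\ref{prop-thickening} to transfer the conclusions to arbitrary thickenings. Since $T_0$ is an induced subgraph of $T_1$, any $H$-freeness established for $T_1$ automatically transfers to $T_0$; on the other hand, anticonnectedness, the absence of simplicial vertices, and the absence of universal vertices each have to be checked separately for the two graphs.

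For the $H$-freeness, I would begin by observing that, as pointed out in the introduction, every hole in $T_0$ has length five, and the same can be verified by a routine enumeration for $T_1$ using the labels in Figure~\ref{fig:T0T1-labeled}. This immediately yields $(C_4,C_6,C_7)$-freeness. For $2P_3$-freeness, the argument reduces to showing that for every induced $P_3$ of $T_1$ (enumerated up to the automorphisms of $T_1$), the subgraph induced on the remaining vertices and anticomplete to this $P_3$ contains no induced $P_3$. Since both $T_0$ and $T_1$ are small and highly symmetric, this case analysis is short. For anticonnectedness, it suffices to exhibit a spanning subgraph of the complement; concretely, I would verify anticonnectedness of $T_0$ directly and then show that each vertex of $V(T_1) \setminus V(T_0)$ has a nonneighbor in $V(T_0)$, whence the complement of $T_1$ is connected as well. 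The absence of simplicial vertices reduces to exhibiting, for each vertex, a pair of pairwise nonadjacent neighbors; the absence of universal vertices reduces to exhibiting, for each vertex, a nonneighbor. Both checks are elementary given the labeled figure.

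For the consequence about thickenings, I would invoke Proposition~\ref{prop-thickening}. One first observes that none of $2P_3$, $C_4$, $C_6$, $C_7$ contains a pair of twins (distinct vertices with the same closed neighborhood), so part~(\ref{ref-prop-thickening-H-free}) of that proposition gives $(2P_3,C_4,C_6,C_7)$-freeness for any thickening of $T_0$ or $T_1$. Since $T_0$ and $T_1$ have more than two vertices, parts~(\ref{ref-prop-thickening-simplicial}), (\ref{ref-prop-thickening-H-universal}), and~(\ref{ref-prop-thickening-H-anticonn}) then transfer, respectively, the absence of simplicial vertices, the absence of universal vertices, and the anticonnectedness.

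The main obstacle is simply the bookkeeping involved in the $2P_3$-freeness check for $T_1$, where one must rule out, for each induced $P_3$ chosen in $T_1$, every potential disjoint induced $P_3$ that is anticomplete to it. This is the longest piece of casework, but it can be shortened substantially by using the automorphism group of $T_1$ (and by handling $T_0$ simultaneously, since $T_0 \subseteq T_1$ as an induced subgraph). All remaining verifications amount to one-line consequences of the adjacency data encoded in Figure~\ref{fig:T0T1-labeled}.
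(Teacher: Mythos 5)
Your proposal is correct and follows essentially the same route as the paper: a direct finite verification of the four properties for $T_0$ and $T_1$ (using that $T_0$ is an induced subgraph of $T_1$ to handle the freeness conditions only for $T_1$), followed by an appeal to Proposition~\ref{prop-thickening}, whose hypotheses you check correctly (none of $2P_3$, $C_4$, $C_6$, $C_7$ has a pair of twins). The paper merely organizes the finite casework with a few shortcuts (e.g.\ reducing the $2P_3$-check from $T_1$ to $T_0$ via Proposition~\ref{prop-G-x-disjoint-cliques-2P3-free}, and using a simplicial elimination ordering of $T_0 \setminus \{a_0,a_1\}$ to limit the hole search), but these are inessential differences.
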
 
\begin{proof} 
In view of Proposition~\ref{prop-thickening}, it is enough to prove that $T_0$ and $T_1$ are both anticonnected, contain no simplicial vertices and no universal vertices, and are $(2P_3,C_4,C_6,C_7)$-free. We will use the vertex labeling of $T_0$ and $T_1$ from Figure~\ref{fig:T0T1-labeled}, so that $T_1 \setminus f_3 = T_0$. In both $T_0$ and $T_1$, vertices $b_0,b_1,b_2,b_3$ together form a stable set of size four, and all the other vertices have at least one nonneighbor in this stable set; therefore, $T_0$ and $T_1$ are both anticonnected. The fact that $T_0$ and $T_1$ contain no simplicial vertices and no universal vertices can easily be seen by inspection. It remains to show that $T_0$ and $T_1$ are $(2P_3,C_4,C_6,C_7)$-free. Since $T_0$ is an induced subgraph of $T_1$, it in fact suffices to show that $T_1$ is $(2P_3,C_4,C_6,C_7)$-free. 

\begin{adjustwidth}{1cm}{1cm} 
\begin{claim} \label{prop-T0T1-2P3C4C6Free-claim-holes-in-T1-C5} 
All holes in $T_1$ are of length five. Consequently, $T_1$ is $(C_4,C_6,C_7)$-free. 
\end{claim} 
\end{adjustwidth} 
{\em Proof of Claim~\ref{prop-T0T1-2P3C4C6Free-claim-holes-in-T1-C5}.} Suppose that $x_0,x_1,\dots,x_{k-1},x_0$ (where $k \geq 4$ and the indices are understood to be in $\mathbb{Z}_k$) is a hole in $T_1$, and set $X := \{x_0,x_1,\dots,x_{k-1}\}$. We must show that $k = 5$. 

Suppose first that $f_3 \in X$. By symmetry, we may assume that $x_0 = f_3$. Since $V(T_1) \setminus N_{T_1}[f_3] = \{b_3,c_3\}$, and since the nonneighbors of $x_0 = f_3$ in the hole $x_0,x_1,\dots,x_{k-1},x_0$ are precisely the vertices $x_2,\dots,x_{k-2}$, we deduce that $x_2,\dots,x_{k-2} \in \{b_3,c_3\}$. Consequently, $4 \leq k \leq 5$. If $k = 5$, then we are done. We may therefore assume that $k = 4$ and $x_2 \in \{b_3,c_3\}$. Now, note that $x_0 = f_3$ and $x_2$ have two nonadjacent common neighbors (namely $x_1$ and $x_3$) in the hole $x_0,x_1,x_2,x_3,x_0$. But the two common neighbors of $f_3$ and $b_3$ (namely $a_0$ and $a_1$) are adjacent, as are the two common neighbors of $f_3$ and $c_3$ (namely $c_1$ and $c_2$). Thus, $x_2 \neq b_3$ and $x_2 \neq c_3$, contrary to the fact that $x_2 \in \{b_3,c_3\}$. 

From now on, we may assume that $f_3 \notin X$, which means that our hole $x_0,x_1,\dots,x_{k-1},x_0$ is in fact a hole in $T_1 \setminus f_3 = T_0$. First of all, note that $b_0,b_1,b_2,b_3,c_1,c_2,c_3$ is a simplicial elimination ordering of $T_0 \setminus \{a_0,a_1\}$, and so by~\cite{FulkersonGross}, $T_0 \setminus \{a_0,a_1\}$ is chordal, i.e.\ it contains no holes. So, at last one of $a_0,a_1$ belongs to $X$. 

Suppose first that $a_0,a_1 \in X$. Since $a_0$ and $a_1$ are adjacent, we may assume by symmetry that $x_0 = a_0$ and $x_1 = a_1$. Then $x_{k-1} \in N_{T_0}(a_0) \setminus N_{T_0}(a_1)$ and $x_2 \in N_{T_0}(a_1) \setminus N_{T_0}(a_0)$, and we deduce that $x_{k-1} = b_0$ and $x_2 = b_1$. Then $x_{k-2} \in N_{T_0}(x_{k-1}) \setminus N_{T_0}(x_0) = N_{T_0}(b_0) \setminus N_{T_0}(a_0) = \{c_1\}$ and $x_3 \in N_{T_0}(x_2) \setminus N_{T_0}(x_1) = N_{T_0}(b_1) \setminus N_{T_0}(a_1) = \{c_1\}$. Thus, $x_{k-2} = x_3 = c_3$; we now deduce that $k-2 = 3$, and therefore, $k = 5$. 

From now on, we may assume that exactly one of $a_0,a_1$ belongs to $X$. By symmetry, we may assume that $a_1 \in X$ and $a_0 \notin X$. Consequently, $x_0,x_1,\dots,x_{k-1},x_0$ is a hole in $T_0 \setminus a_0$. Since $d_{T_0 \setminus a_0}(b_0) = 1$, we see that $b_0 \notin X$. Therefore, $x_0,x_1,\dots,x_{k-1},x_0$ is a hole in the 3-pentagon $T_0 \setminus \{a_0,b_0\}$, and it is clear that all holes in a 3-pentagon are of length five. Therefore, $k = 5$, and we are done.~$\blacklozenge$ 

\begin{adjustwidth}{1cm}{1cm} 
\begin{claim} \label{prop-T0T1-2P3C4C6Free-claim-T1-2P3-free} $T_1$ is $2P_3$-free. 
\end{claim} 
\end{adjustwidth} 
{\em Proof of Claim~\ref{prop-T0T1-2P3C4C6Free-claim-T1-2P3-free}.} Since $V(T_1) \setminus N_{T_1}[f_3] = \{b_2,c_3\}$ is a clique, Proposition~\ref{prop-G-x-disjoint-cliques-2P3-free} guarantees that $T_1$ is $2P_3$-free if and only if $T_1 \setminus f_3 = T_0$ is a $2P_3$-free. Therefore, it suffices to show that $T_0$ is $2P_3$-free. Suppose otherwise, and fix pairwise distinct vertices $x_0,x_1,x_2,y_0,y_1,y_2 \in V(T_0)$ such that $x_0,x_1,x_2$ and $y_0,y_1,y_2$ are both induced paths in $T_0$, and such that $\{x_0,x_1,x_2\}$ is anticomplete to $\{y_0,y_1,y_2\}$. 

Suppose first that $a_0,a_1 \notin \{x_0,x_1,x_2,y_0,y_1,y_2\}$. Then vertices $x_0,x_1,x_2,y_0,y_1,y_2$ induce a $2P_3$ in the graph $T_0 \setminus \{a_0,a_1\}$. But note that, in the graph $T_0 \setminus \{a_0,a_1\}$, vertices $b_0,b_1,b_2,b_3$ have degree $1$, whereas vertices $x_1,y_1$ have degree at least two. So, $x_1,y_1 \notin \{b_0,b_1,b_2,b_3\}$, and therefore, $x_1,y_1 \in \{c_1,c_2,c_3\}$. But this is impossible since $x_1,y_1$ are nonadjacent, whereas $\{c_1,c_1,c_3\}$ is a clique. 

Suppose next that $a_0,a_1 \in \{x_0,x_1,x_2,y_0,y_1,y_2\}$. Since $a_0,a_1$ are adjacent, we may assume by symmetry that $x_0 = a_0$ and $x_1 = a_1$. Then $x_2 \in N_{T_0}(x_1) \setminus N_{T_0}[x_0] = N_{T_0}(a_1) \setminus N_{T_0}[a_0] = \{b_1\}$. So, $x_2 = b_1$. But now $y_0,y_1,y_2 \in V(T_0) \setminus N_{T_0}[\{x_0,x_1,x_2\}] = V(T_0) \setminus N_{T_0}[\{a_0,a_1,b_1\}] = \{c_2,c_3\}$. Since $y_0,y_1,y_2$ are pairwise distinct, this is clearly impossible. 

Thus, $\{x_0,x_1,x_2,y_0,y_1,y_2\}$ contains exactly one of $a_0,a_1$. By symmetry, we may assume that $a_1 \in \{x_0,x_1,x_2,y_0,y_1,y_2\}$ and $a_0 \notin \{x_0,x_1,x_2,y_0,y_1,y_2\}$, so that vertices $x_0,x_1,x_2,y_0,y_1,y_2$ together induce a $2P_3$ in the graph $T_0 \setminus a_0$. 

Suppose first that $b_0 \in \{x_0,x_1,x_2,y_0,y_1,y_2\}$. Since $d_{T_0 \setminus a_0}(b_0) = 1$, we may assume by symmetry that $b_0 = x_0$. But then $x_1 = c_1$ and $x_2 \in \{b_1,c_2,c_3\}$.\footnote{The fact that $x_1 = c_1$ follows from the fact that $c_1$ is the only neighbor of $x_1 = b_0$ in $T_0 \setminus a_0$. On the other hand, the fact that $x_2 \in \{b_1,c_2,c_3\}$ follows from the fact that $N_{T_0 \setminus a_0}(c_1) = \{b_0,b_1,c_2,c_2\}$ and the fact that $b_0 = x_0$.} In any case, we have that $a_1 \in \{y_0,y_1,y_2\}$, and consequently, $a_1$ is nonadjacent to $x_2$. Therefore, $x_2 \neq b_1$, and it follows that $x_2 \in \{c_2,c_3\}$. By symmetry, we may assume that $x_2 = c_2$. But now $y_0,y_2,y_3$ is an induced two-edge-path in the graph $T_0 \setminus N_{T_0}[\{x_0,x_1,x_2\}] = T_0 \setminus N_{T_0}[\{b_0,c_1,c_2\}] = T_0[a_1,b_3]$, which is impossible, since this graph contains only two vertices. 

Thus, $b_0 \notin \{x_0,x_1,x_2,y_0,y_1,y_2\}$. It follows that $T_0 \setminus \{a_0,b_0\}$ contains an induced $2P_3$. But note that $T_0 \setminus \{a_0,b_0\}$ is a 3-pentagon, and in particular, it contains exactly seven vertices. Therefore, by deleting one suitably chosen vertex of the 3-pentagon $T_0 \setminus \{a_0,b_0\}$, we should obtain a $2P_3$. But it is easy to see by inspection that no such vertex exists. This proves that $T_0$ is, in fact, $2P_3$-free, and we are done.~$\blacklozenge$ 

\medskip 

By Claims~\ref{prop-T0T1-2P3C4C6Free-claim-holes-in-T1-C5} and~\ref{prop-T0T1-2P3C4C6Free-claim-T1-2P3-free}, we have that $T_1$ is $(2P_3,C_4,C_6,C_7)$-free, and we are done. 
\end{proof}

\begin{lemma} \label{lemma-T0-attach} Let $G$ be a $(2P_3,C_4,C_6)$-free graph, and let $T$ an induced subgraph of $G$ that is isomorphic to $T_0$, with the vertices of $T$ labeled as in Figure~\ref{fig:T0T1-labeled} (left). Then for all $x \in V(G)$, one of the following holds: 
\begin{enumerate}[(a)] 
\item \label{ref-T0-attach-clone} there exists some $v \in V(T)$ such that $N_G[x] \cap V(T) = N_T[v]$; 
\item \label{ref-T0-attach-not-bci} there exists some $i \in \{2,3\}$ such that $N_G[x] \cap V(T) = V(T) \setminus \{b_i,c_i\}$; 
\item \label{ref-T0-attach-c23} $N_G[x] \cap V(T) = \{c_2,c_3\}$; 
\item \label{ref-T0-attach-anticenter} $N_G[x] \cap V(T) = \emptyset$; 
\item \label{ref-T0-attach-center} $N_G[x] \cap V(T) = V(T)$. 
\end{enumerate} 
\end{lemma}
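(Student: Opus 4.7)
The plan is a case analysis on $N := N_G(x) \cap V(T)$. If $x \in V(T)$, case~(a) holds trivially with $v = x$, so throughout I may assume $x \notin V(T)$ and $N_G[x] \cap V(T) = N$. The engine of the analysis is a standard observation: in a $(C_4,C_6)$-free graph, any vertex $y$ outside an induced 5-hole $H$ has its neighborhood in $V(H)$ equal to $\emptyset$, a single vertex, two consecutive vertices, three consecutive vertices, or all of $V(H)$, since any other attachment produces an induced $C_4$. I would record this as a preliminary observation (proved by a quick check over the remaining cases) and then enumerate the five induced 5-holes of $T_0$, namely $H_{12} = a_1 b_1 c_1 c_2 b_2 a_1$, $H_{13} = a_1 b_1 c_1 c_3 b_3 a_1$, $H_{23} = a_1 b_2 c_2 c_3 b_3 a_1$ (the three holes of the 3-pentagon $T_0 \setminus \{a_0,b_0\}$), together with $H_0 = a_0 b_0 c_1 b_1 a_1 a_0$ and $H_1 = a_0 b_0 c_1 c_3 b_3 a_0$.

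Step~1 is to restrict $N \cap \{a_1,b_1,b_2,b_3,c_1,c_2,c_3\}$ by simultaneously applying the 5-hole constraint to $H_{12}$, $H_{13}$, $H_{23}$. I would split on whether $x$ is adjacent to $a_1$. In the positive case, because $a_1$ is the unique vertex of the 3-pentagon lying in all three holes, a short cross-hole comparison forces $N \cap \{c_1,c_2,c_3\}$ to be either empty (with $N \cap \{b_1,b_2,b_3\}$ then free) or all three (in which case $x$ is complete to the 3-pentagon). In the negative case, one enumerates $N \cap \{c_1,c_2,c_3\}$ by cardinality and the constraint narrows the possibilities analogously. Step~2 uses the remaining two holes $H_0$ and $H_1$ to pin down $N \cap \{a_0,b_0\}$; the two holes share the arc $a_0 b_0 c_1$, and this shared structure forces $N \cap \{a_0,b_0\}$ in almost every subcase.

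Step~3 is the crucial verification: for each consistent candidate $N$ from Steps~1 and~2, one either observes that it matches one of the sets described in~(a)--(e), or one exhibits an induced $C_4$, $C_6$, or $2P_3$ in $G$. The $C_4$'s arise as $x,u,w,v,x$ with $u,v \in N$, $w \in V(T)\setminus N$, $uw,wv \in E(T)$, $uv \notin E(T)$ (for instance, the forbidden pattern $V(T) \setminus \{b_1,c_1\}$ is ruled out by the cycle $x,b_0,c_1,c_3,x$). The $C_6$'s arise analogously via induced 3-edge paths in $T$. The $2P_3$'s are obtained by pairing an induced $P_3$ of the form $x,u,w$ (with $u \in N$ and $w \in N_T(u) \setminus N$) with an induced $P_3$ inside $T_0$ lying in $V(T) \setminus N_G[\{x,u,w\}]$ and anticomplete to $x$. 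The graph $T_0$ contains a rich supply of such short induced $P_3$'s---for example $b_0 c_1 b_1$, $b_0 a_0 b_3$, $a_0 b_3 c_3$, $b_2 a_1 b_1$, and several paths through the triangle $c_1 c_2 c_3$---which typically provide the needed anticomplete companion whenever $N$ is not of the listed form.

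The main obstacle is the size of the case list combined with the broken symmetry of $T_0$. While the 3-pentagon $T_0 \setminus \{a_0,b_0\}$ is symmetric under permutations of the indices $\{1,2,3\}$, the edges $b_0 c_1$ and $a_0 b_3$ distinguish indices $1$ and $3$ from each other and from index $2$, so the casework cannot be quotiented by a large symmetry group. This asymmetry is exactly why case~(b) lists $i \in \{2,3\}$ but not $i = 1$: the pattern $V(T) \setminus \{b_1,c_1\}$ is destroyed by an induced $C_4$ through $b_0$, whereas $V(T) \setminus \{b_2,c_2\}$ and $V(T) \setminus \{b_3,c_3\}$ are genuinely realizable. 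Producing the specific forbidden subgraph in each forbidden subcase---especially for the many $2P_3$ arguments, which require carefully choosing the companion path within $T$---is the most labor-intensive part of the proof.
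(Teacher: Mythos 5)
Your overall strategy---an exhaustive classification of $N_G(x)\cap V(T)$ in which every forbidden attachment is certified by an induced $C_4$, $C_6$, or $2P_3$---is the same one the paper uses; the paper merely organizes the casework differently (by the number of neighbors of $x$ in the stable set $\{b_0,b_1,b_2,b_3\}$) and produces the certificates directly rather than through a preliminary 5-hole attachment lemma. Your 5-hole observation is correct as stated, and your explanation of why outcome~(b) excludes $i=1$ is right. But there are two problems. The first is a concrete factual error: $T_0$ has seven induced 5-holes, not five. You correctly note that $T_0\setminus\{a_0,b_0\}$ is a 3-pentagon, but $T_0\setminus\{a_1,b_1\}$ is a 3-pentagon as well (apex $a_0$, paths $a_0b_0c_1$, $a_0b_2c_2$, $a_0b_3c_3$, triangle $c_1c_2c_3$), contributing the additional holes $a_0,b_0,c_1,c_2,b_2,a_0$ and $a_0,b_2,c_2,c_3,b_3,a_0$. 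So $H_0$ and $H_1$ are not ``the remaining two holes,'' and your Step~2, which is explicitly built on that claim to pin down $N\cap\{a_0,b_0\}$, rests on an incomplete inventory. This is recoverable (your Step~3 is a catch-all), but it means Steps~1--2 prune less than you think.

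The second and more serious issue is that Step~3 \emph{is} the proof of this lemma, and you have not carried it out. After the hole constraints, many candidate sets $N$ survive that are not among (a)--(e)---every singleton, sets such as $\{c_2\}$ or $\{a_0\}$, various near-misses of the $N_T[v]$'s---and each must be killed by an explicitly exhibited forbidden induced subgraph. The $2P_3$ certificates in particular require nontrivial choices: for example, to eliminate $N=\{a_0\}$ the natural starting paths $x,a_0,a_1$ and $x,a_0,b_0$ both fail (the complements of their closed neighborhoods in $T$ are $\{c_1,c_2,c_3\}$ and $\{b_1,c_2,c_3\}$, neither of which contains an induced $P_3$), and one must instead use something like $G[x,a_0,b_2,\,b_1,c_1,c_3]$. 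You correctly identify this verification as ``the most labor-intensive part,'' but that labor is the content of the lemma; as written, the proposal establishes a viable framework and one worked example, not the result.
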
 
\begin{proof} 
Fix an arbitrary $x \in V(G)$. If $x \in V(T)$, then clearly, $N_G[x] \cap V(T) = N_T[x]$, and in particular,~(\ref{ref-T0-attach-clone}) holds. We may therefore assume that $x \notin V(T)$. We now prove a sequence of claims. 

\begin{adjustwidth}{1cm}{1cm}
\begin{claim} \label{claim-lemma-T0-attach-claim-two-b0b1b2b3}
If $x$ has a neighbor both in $\{b_0,b_1\}$ and in $\{b_2,b_3\}$, then $x$ is complete to $\{a_0,a_1\}$ and has at least three neighbors in $\{b_0,b_1,b_2,b_3\}$. 
\end{claim} 
\end{adjustwidth} 
\noindent 
{\em Proof of Claim~\ref{claim-lemma-T0-attach-claim-two-b0b1b2b3}.} Assume that $x$ has a neighbor both in $\{b_0,b_1\}$ and in $\{b_2,b_3\}$; by symmetry, we may assume that $x$ is adjacent to $b_1$ and $b_2$. Then $x$ is adjacent to $a_1$, for otherwise, $x,b_1,a_1,b_2,x$ would be a 4-hole in $G$, a contradiction. It now remains to show that $x$ has a neighbor in $\{b_0,b_3\}$ and is adjacent to $a_0$.  

Suppose that $x$ is anticomplete to $\{b_0,b_3\}$. Then $x$ is nonadjacent to $c_3$, for otherwise, $x,c_3,b_3,a_1,x$ would be a 4-hole in $G$, a contradiction. Then $x$ has a neighbor in $\{c_1,c_2\}$, for otherwise, $G[x,a_1,b_3,b_0,c_1,c_2]$ would be a $2P_3$, a contradiction. If $x$ is adjacent to $c_1$, but nonadjacent to $c_2$, then $x,c_1,c_2,b_2,x$ is a 4-hole in $G$, a contradiction. Similarly, if $x$ is adjacent to $c_2$, but nonadjacent to $c_1$, then $x,c_2,c_1,b_1,x$ is a 4-hole in $G$, a contradiction. So, $x$ is complete to $\{c_1,c_2\}$. Then $x$ is nonadjacent to $a_0$, for otherwise, $x,a_0,b_0,c_1,x$ would be a 4-hole in $G$, a contradiction. But now $x,c_1,c_3,b_3,a_0,b_2,x$ is a 6-hole in $G$, a contradiction. 

We have now shown that $x$ has a neighbor in $\{b_0,b_3\}$, and it remains to show that $x$ is adjacent to $a_0$. Fix $i \in \{0,3\}$ such that $x$ is adjacent to $b_i$. Then $x$ is adjacent to $a_0$, for otherwise, $x,b_i,a_0,b_2,x$ would be a 4-hole in $G$, a contradiction.~$\blacklozenge$ 

\begin{adjustwidth}{1cm}{1cm}
\begin{claim} \label{claim-lemma-T0-attach-claim-neighbor-in-b0b1-b2b3} If $x$ has a neighbor in both $\{b_0,b_1\}$ and $\{b_2,b_3\}$, then one of (\ref{ref-T0-attach-clone}), (\ref{ref-T0-attach-not-bci}), (\ref{ref-T0-attach-center}) holds. 
\end{claim} 
\end{adjustwidth} 
\noindent 
{\em Proof of Claim~\ref{claim-lemma-T0-attach-claim-neighbor-in-b0b1-b2b3}.} By symmetry, we may assume that $x$ is adjacent to $b_1$ and $b_2$. By Claim~\ref{claim-lemma-T0-attach-claim-two-b0b1b2b3}, $x$ is complete to $\{a_0,a_1\}$ and is adjacent to at least one of $b_0,b_3$. 

Suppose first that $x$ is adjacent to $b_0$. Then $x$ is adjacent to $c_1$, for otherwise, $x,b_0,c_1,b_1,x$ would be a 4-hole, a contradiction. Further, $x$ is adjacent to $c_2$, for otherwise, $x,c_1,c_2,b_2,x$ would be a 4-hole, a contradiction. If $x$ is complete to $\{b_3,c_3\}$, then (\ref{ref-T0-attach-center}) holds, and if $x$ is anticomplete to $\{b_3,c_3\}$, then (\ref{ref-T0-attach-not-bci}) holds (with $i = 3$). So, we may assume that $x$ is mixed on $\{b_3,c_3\}$. But if $x$ is adjacent to $b_3$ and nonadjacent to $c_3$, then $x,c_2,c_3,b_3,x$ is a 4-hole in $G$, a contradiction; and if $x$ is adjacent to $c_3$ and nonadjacent to $b_3$, then $x,a_1,b_3,c_3,x$ is a 4-hole in $G$, a contradiction. 

From now on, we assume that $x$ is nonadjacent to $b_0$, and is consequently adjacent to $b_3$. Then $x$ is nonadajcent to $c_1$, for otherwise, $x,c_1,b_0,a_0,x$ would be a 4-hole in $G$, a contradiction. But then $x$ is in fact anticomplete to $\{c_1,c_2,c_3\}$, for otherwise, we fix an index $i \in \{2,3\}$ such that $x$ is adjacent to $c_i$, and we observe that $x,c_i,c_1,b_1,x$ is a 4-hole in $G$. We now see that $N_G(x) \cap (T) = N_G[a_1]$, and in particular, (\ref{ref-T0-attach-clone}) holds.~$\blacklozenge$ 

\begin{adjustwidth}{1cm}{1cm}
\begin{claim} \label{claim-lemma-T0-attach-claim-two-in-b0b1b2b3} If $x$ has at least two neighbors in $\{b_0,b_1,b_2,b_3\}$, then one of (\ref{ref-T0-attach-clone}), (\ref{ref-T0-attach-not-bci}), (\ref{ref-T0-attach-center}) holds. 
\end{claim} 
\end{adjustwidth} 
\noindent 
{\em Proof of Claim~\ref{claim-lemma-T0-attach-claim-two-in-b0b1b2b3}.} In view of Claim~\ref{claim-lemma-T0-attach-claim-neighbor-in-b0b1-b2b3}, we may assume that $x$ is complete to one of $\{b_0,b_1\}$ and $\{b_2,b_3\}$ and is anticomplete to the other. 

Suppose first that $x$ is complete to $\{b_2,b_3\}$ and anticomplete to $\{b_0,b_1\}$. Then $x$ is adjacent to $c_1$, for otherwise, $G[b_2,x,b_3,b_0,c_1,b_1]$ would be a $2P_3$, a contradiction. But then if $x$ is adjacent to $a_0$, then $x,a_0,b_0,c_1,x$ is a 4-hole in $G$, and if $x$ is nonadjacent to $a_0$, then $x,b_2,a_0,b_3,x$ is a 4-hole in $G$, a contradiction in either case. 

Thus, $x$ is complete to $\{b_0,b_1\}$ and anticomplete to $\{b_2,b_3\}$. Then $x$ is adjacent to $c_1$, for otherwise, $x,b_0,c_1,b_1,x$ would be a 4-hole in $G$, a contradiction. Further, $x$ is adjacent to at least one of $c_2,c_3$, for otherwise, $G[b_0,x,b_1,b_2,c_2,c_3]$ would be a $2P_3$, a contradiction. By symmetry, we may assume that $x$ is adjacent to $c_2$. Then $x$ is anticomplete to $\{a_0,a_1\}$, for otherwise, we fix an index $i \in \{0,1\}$ such that $x$ is adjacent to $a_i$, and we observe that $x,a_i,b_2,c_2,x$ is a 4-hole in $G$, a contradiction. Now $x$ is adjacent to $c_3$, for otherwise, $x,c_2,c_3,b_3,a_0,b_0,x$ would be a 6-hole in $G$, a contradiction. But now $N_G(x) \cap V(P) = N_T[c_1]$, and in particular, (\ref{ref-T0-attach-clone}) holds.~$\blacklozenge$ 

\begin{adjustwidth}{1cm}{1cm}
\begin{claim} \label{claim-lemma-T0-attach-claim-one-in-b0b1b2b3}
If $x$ has a unique neighbor in $\{b_0,b_1,b_2,b_3\}$, then~(\ref{ref-T0-attach-clone}) holds. 
\end{claim} 
\end{adjustwidth} 
\noindent 
{\em Proof of Claim~\ref{claim-lemma-T0-attach-claim-one-in-b0b1b2b3}.} Suppose first that $x$ is adjacent to exactly one of $b_0,b_1$ and is anticomplete to $\{b_2,b_3\}$. By symmetry, we may assume that $x$ is adjacent to $b_0$ and is nonadjacent to $b_1$. 

Suppose that $x$ is nonadjacent to $c_1$. Then $x$ is in fact anticomplete to $\{c_1,c_2,c_3\}$, for otherwise, we fix an index $i \in \{2,3\}$ such that $x$ is adjacent to $c_i$, and we observe that $x,c_i,c_1,b_0,x$ is a 4-hole in $G$, a contradiction. Then $x$ is nonadjacent to $a_1$, for otherwise, $x,a_1,b_2,c_2,c_1,b_0,x$ would be a 6-hole in $G$, a contradiction. But now $G[x,b_0,c_1,b_2,a_1,b_3]$ is a $2P_3$, a contradiction. This proves that $x$ is in fact adjacent to $c_1$. 

We now see that $x$ is nonadjacent to $a_1$, for otherwise, $x,a_1,b_1,c_1,x$ would be a 4-hole in $G$, a contradiction. Moreover, $x$ is adjacent to $a_0$, for otherwise, $G[x,c_1,b_1,b_2,a_0,b_3]$ would be a $2P_3$, a contradiction. Finally, $x$ is anticomplete to $\{c_2,c_3\}$, for otherwise, we fix an index $i \in \{2,3\}$ such that $x$ is adjacent to $c_i$, and we observe that $x,c_i,b_i,a_0,x$ is a 4-hole in $G$, a contradiction. But now $N_G(x) \cap V(T) = N_T[b_0]$, and in particular, (\ref{ref-T0-attach-clone}) holds. 

From now on, we may assume that $x$ is anticomplete to $\{b_0,b_1\}$, and consequently has a unique neighbor in $\{b_2,b_3\}$. By symmetry, we may assume that $x$ is adjacent to $b_3$ and is nonadjacent to $b_2$. 

Suppose first that $x$ is adjacent to $c_1$. Then $x$ is anticomplete to $\{a_0,a_1\}$, for otherwise, we fix an index $i \in \{0,1\}$ such that $x$ is adjacent to $a_i$, and we observe that $x,a_i,b_i,c_1,x$ is a 4-hole in $G$, a contradiction. Further, $x$ is adjacent to $c_3$, for otherwise, $x,b_3,c_3,c_1,x$ would be a 4-hole in $G$, a contradiction. Finally, $x$ is adjacent to $c_2$, for otherwise, $x,c_1,c_2,b_2,a_1,b_3,x$ would be a 6-hole in $G$, a contradiction. We now have that $N_G(x) \cap V(T) = N_T[c_3]$, and in particular, (\ref{ref-T0-attach-clone}) holds. 

From now on, we may assume that $x$ is nonadjacent to $c_1$. Suppose first that $x$ is adjacent to $c_2$. Then $x$ is nonadjacent to $a_1$, for otherwise, $x,a_1,b_2,c_2,x$ would be a 4-hole in $G$, a contradiction. But now $x,b_3,a_1,b_1,c_1,c_2,x$ is a 6-hole in $G$, a contradiction. This proves that $x$ is nonadajcent to $c_2$. Then $x$ is adjacent to $a_0$, for otherwise, $G[x,b_3,a_0,b_1,c_1,c_2]$ would be a $2P_3$, a contradiction. Similarly, $x$ is adjacent to $a_1$, for otherwise, $G[x,b_3,a_1,b_0,c_1,c_2]$ would be a $2P_3$, a contradiction. Finally, $x$ is adjacent to $c_3$, for otherwise, $G[x,a_1,b_2,b_0,c_1,c_3]$ would be a $2P_3$, a contradiction. But now $N_G(x) \cap V(T) = N_T[b_3]$, and in particular, (\ref{ref-T0-attach-clone}) holds.~$\blacklozenge$ 

\begin{adjustwidth}{1cm}{1cm} 
\begin{claim} \label{claim-lemma-T0-attach-claim-anticomp-b0b1b2b3} If $x$ is anticomplete to $\{b_0,b_1,b_2,b_3\}$, then (\ref{ref-T0-attach-c23}) or~(\ref{ref-T0-attach-anticenter}) holds. 
\end{claim} 
\end{adjustwidth} 
\noindent 
{\em Proof of Claim~\ref{claim-lemma-T0-attach-claim-anticomp-b0b1b2b3}.} Assume that $x$ is anticomplete to $\{b_0,b_1,b_2,b_3\}$. We first show that $x$ is anticomplete to $\{a_0,a_1\}$. Suppose otherwise. By symmetry, we may assume that $x$ is adjacent to $a_1$. Then $x$ is anticomplete to $\{c_1,c_2,c_3\}$, for otherwise, we fix an index $i \in \{1,2,3\}$ such that $x$ is adjacent to $c_i$, and we observe that $x,a_1,b_i,c_i,x$ is a 4-hole in $G$, a contradiction. But now $G[x,a_1,b_3,b_0,c_1,c_2]$ is a $2P_3$, a contradiction. This proves that $x$ is indeed anticomplete to $\{a_0,a_1\}$. 

We now have that $N_G(x) \cap V(T) \subseteq \{c_1,c_2,c_3\}$. Then $x$ is nonadjacent to $c_1$, for otherwise, $G[x,c_1,b_0,b_2,a_1,b_3]$ would be a $2P_3$, a contradiction. It follows that $N_G(x) \cap V(T) \subseteq \{c_2,c_3\}$, and it remains to show that $x$ is either complete or anticomplete to $\{c_2,c_3\}$. Suppose otherwise. By symmetry, we may assume that $x$ is adjacent to $c_2$ and nonadjacent to $c_3$. But then $G[x,c_2,c_3,a_0,a_1,b_1]$ is a $2P_3$, a contradiction.~$\blacklozenge$

\medskip 

The result now follows immediately from Claims~\ref{claim-lemma-T0-attach-claim-two-in-b0b1b2b3},~\ref{claim-lemma-T0-attach-claim-one-in-b0b1b2b3}, and~\ref{claim-lemma-T0-attach-claim-anticomp-b0b1b2b3}. 
\end{proof}

A {\em tent} is a graph $G$ whose vertex set can be partitioned into (possibly empty) sets $A_0,A_1,$ $B_0,B_1,B_2,B_3,C_1,C_2,C_3,F_2,F_3,W,Y,Z$ such that all the following hold: 
\begin{itemize} 
\item $A_0,A_1,B_0,B_1,B_2,B_3,C_1,C_2,C_3$ are nonempty cliques; 
\item $F_2,F_3,Y$ are cliques, and at most one of them is nonempty; 
\item $W$ is a (possibly empty) clique; 
\item $A_0$ is complete to $A_1$; 
\item $A_0$ is complete to $B_0,B_2,B_3$ and anticomplete to $B_1,C_1,C_2,C_3$; 
\item $A_1$ is complete to $B_1,B_2,B_3$ and anticomplete to $B_0,C_1,C_2,C_3$; 
\item $B_0,B_1,B_2,B_3$ are pairwise anticomplete to each other; 
\item $C_1,C_2,C_3$ are pairwise complete to each other; 
\item $C_1$ is complete to $B_0,B_1$ and anticomplete to $B_2,B_3$; 
\item $C_2$ is complete to $B_2$ and anticomplete to $B_0,B_1,B_3$; 
\item $C_3$ is complete to $B_3$ and anticomplete to $B_0,B_1,B_2$; 
\item $F_2$ is complete to $A_0,A_1,B_0,B_1,B_3,C_1,C_3$ and anticomplete to $B_2,C_2$; 
\item $F_3$ is complete to $A_0,A_1,B_0,B_1,B_2,C_1,C_2$ and anticomplete to $B_3,C_3$; 
\item $W$ is complete to $A_0,A_1,B_0,B_1,B_2,B_3,C_1,C_2,C_3,F_2,F_3$; 
\item $Y$ is complete to $C_2,C_3$ and anticomplete to $A_0,A_1,B_0,B_1,B_2,B_3,C_1$; 
\item $Z$ is anticomplete to $A_0,A_1,B_0,B_1,B_2,B_3,C_1,C_2,C_3,Y$;\footnote{Note that this implies that $N_G(Z) \subseteq F_2 \cup F_3 \cup W$.} 
\item $Y$ can be ordered as $Y = \{y_1,\dots,y_t\}$ so that $N_G[y_t] \subseteq \dots \subseteq N_G[y_1]$;\footnote{It is possible that $Y = \emptyset$; in this case, we simply have that $t = 0$. On the other hand, our definition implies that if $Y \neq \emptyset$, then $F_2 = F_3 = \emptyset$. Moreover, our definition implies that $Y \cup C_2 \cup C_3 \subseteq N_G[y] \subseteq Y \cup C_2 \cup C_3 \cup W$ for all $y \in Y$, and that $Y$ and $C_2 \cup C_3 \cup W$ are both cliques.} 
\item if $Z \neq \emptyset$, then $Z$ can be partitioned into nonempty cliques $Z_1,\dots,Z_{\ell}$, pairwise anticomplete to each other, and moreover, for each $i \in \{1,\dots,\ell\}$, $Z_i$ can be ordered as $Z_i = \{z_1^i,\dots,z_{t_i}^i\}$ so that $N_G[z_{t_i}^i] \subseteq \dots \subseteq N_G[z_1^i]$. 
\end{itemize} 
Under these conditions, we also say that $(A_0,A_1;B_0,B_1,B_2,B_3;C_1,C_2,C_3;F_2,F_3;W;Y;Z)$ is a {\em tent partition} of the tent $G$.

Note that any thickening of $T_0$ or $T_1$ is a tent.\footnote{This follows immediately form the appropriate definitions, as long as we keep in mind the vertex labelings of $T_0$ and $T_1$ from Figure~\ref{fig:T0T1-labeled}.}

\begin{proposition} \label{prop-tent-simplicial-universal-T0T1} Let $G$ be a tent, and let $(A_0,A_1;B_0,B_1,B_2,B_3;C_1,C_2,C_3;F_2,F_3;W;Y;Z)$ be a tent partition of $G$. Then all the following hold: 
\begin{enumerate}[(a)] 
\item \label{ref-tent-Y-simplicial} if $Y \neq \emptyset$, then some vertex of $Y$ is simplicial in $G$; 
\item \label{ref-tent-Z-simplicial} if $Z \neq \emptyset$, then some vertex of $Z$ is simplicial in $G$; 
\item \label{ref-tent-W-universal} every vertex of $W$ is a universal vertex of $G \setminus (Y \cup Z)$; 
\item \label{ref-no-WYZ-thickening-T0T1} $G \setminus (W \cup Y \cup Z)$ is a thickening of $T_0$ or $T_1$. 
\end{enumerate} 
\end{proposition}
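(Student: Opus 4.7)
The plan is to verify each of the four items in turn using the tent-partition definition directly; the argument is largely bookkeeping, the main observation being that the chain orderings of $Y$ and of each $Z_i$ force the closed neighborhood of the ``last'' vertex to lie inside a clique.

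For \textbf{(a)}, I would fix the ordering $Y = \{y_1,\dots,y_t\}$ given by the tent partition and show that $y_t$ is simplicial. By the definition of a tent (and the footnote there), $N_G[y_t] \subseteq Y \cup C_2 \cup C_3 \cup W$. The pieces $Y$, $C_2 \cup C_3$, and $W$ are each cliques by definition; moreover $Y$ is complete to $C_2 \cup C_3$ (tent), $W$ is complete to $C_2 \cup C_3$ (tent), so it only remains to see that $N_G[y_t] \cap W$ is complete to $Y$. But any $s \in N_G[y_t] \cap W$ lies in $N_G[y_j]$ for every $j \le t$ by the containment chain $N_G[y_t] \subseteq \dots \subseteq N_G[y_1]$, hence is adjacent to each $y_j$. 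Thus $N_G[y_t]$ is a clique and $y_t$ is simplicial.

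For \textbf{(b)}, I would pick any nonempty $Z_i$ and look at $z_{t_i}^i$. Since $Y \neq \emptyset$ would force $F_2 = F_3 = \emptyset$ (at most one of $F_2,F_3,Y$ is nonempty), and $N_G(Z) \subseteq F_2 \cup F_3 \cup W$ by the tent partition, one has $N_G[z_{t_i}^i] \subseteq Z_i \cup F_2 \cup F_3 \cup W$. Among $F_2, F_3$ at most one is nonempty, so $F_2 \cup F_3 \cup W$ is itself a clique (using that $W$ is complete to $F_2$ and to $F_3$). As in (a), the chain $N_G[z_{t_i}^i] \subseteq \dots \subseteq N_G[z_1^i]$ guarantees that any $s \in N_G[z_{t_i}^i] \setminus Z_i$ is complete to $Z_i$, so $N_G[z_{t_i}^i]$ is a clique and $z_{t_i}^i$ is simplicial.

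For \textbf{(c)}, this is immediate from the tent definition: $W$ is a clique and is complete to every one of $A_0,A_1,B_0,B_1,B_2,B_3,C_1,C_2,C_3,F_2,F_3$, whose union together with $W$ is exactly $V(G) \setminus (Y \cup Z)$. So any $w \in W$ is adjacent in $G$ to every other vertex of $G \setminus (Y \cup Z)$.

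For \textbf{(d)}, I would split into two cases according to whether $F_2 \cup F_3 = \emptyset$ or not (recalling again that at most one of $F_2,F_3$ is nonempty once $Y$ is deleted, though in fact both could be empty). Let $G' := G \setminus (W \cup Y \cup Z)$. In the first case, $V(G') = A_0 \cup A_1 \cup B_0 \cup B_1 \cup B_2 \cup B_3 \cup C_1 \cup C_2 \cup C_3$ is partitioned into nine nonempty cliques whose pairwise complete/anticomplete relations, as spelled out in the tent definition, coincide exactly with the edge relation of $T_0$ under the labeling $a_i \leftrightarrow A_i$, $b_i \leftrightarrow B_i$, $c_i \leftrightarrow C_i$ (cf.\ Figure~\ref{fig:T0T1-labeled}); so $G'$ is a thickening of $T_0$. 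In the second case, say $F_3 \neq \emptyset$ and $F_2 = \emptyset$ (the other subcase is symmetric via the obvious relabeling), the same dictionary together with $f_3 \leftrightarrow F_3$ and the clauses for $F_3$ in the tent definition match the edges of $T_1$, so $G'$ is a thickening of $T_1$. The only genuine obstacle is the bookkeeping in (d): one must check item by item that every adjacency prescribed by the tent definition agrees with the corresponding adjacency in $T_0$ (resp.\ $T_1$), but this is routine once the labeling is fixed.
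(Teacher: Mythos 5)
Your proposal is correct and follows essentially the same route as the paper: for (a) and (b) it isolates the last vertex of the chain ordering and checks its closed neighborhood is a clique using exactly the same containment argument, and (c), (d) are handled by the same direct reading of the tent definition (with the same case split on $F_2\cup F_3$ in (d)). The only cosmetic blemish is the aside in (b) about $Y\neq\emptyset$ forcing $F_2=F_3=\emptyset$, which is irrelevant there — what matters, and what you do use, is that $F_2\cup F_3\cup W$ is a clique because at most one of $F_2,F_3$ is nonempty.
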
 
\begin{proof} 
We first prove~(\ref{ref-tent-Y-simplicial}). Suppose that $Y \neq \emptyset$, and fix an ordering $Y = \{y_1,\dots,y_t\}$ of $Y$ such that $N_G[y_t] \subseteq \dots \subseteq N_G[y_1]$, as in the definition of a tent. Our goal is to show that $N_G[y_t]$ is a clique, which will immediately imply that $y_t$ is a simplicial vertex. Since $Y \neq \emptyset$, the definition of a tent guarantees that $F_2 = F_3 = \emptyset$, and it further guarantees that $Y$ is a clique, complete to $C_2 \cup C_3$ and anticomplete to $A_0 \cup A_1 \cup B_0 \cup B_1 \cup B_2 \cup B_3 \cup C_1$. Therefore, $Y \cup C_2 \cup C_3 \subseteq N_G[y_t] \subseteq \dots \subseteq N_G[y_1] \subseteq Y \cup C_2 \cup C_3 \cup W$. Set $W_t := N_G(y_t) \cap W$, so that $N_G[y_t] = Y \cup C_2 \cup C_3 \cup W_t$. By the definition of a tent, $Y \cup C_2 \cup C_3$ is a clique, as is $W_t$ (because it is a subset of the clique $W$). So, it suffices to show that $Y \cup C_2 \cup C_3$ is complete to $W_t$. By the definition of a tent, $C_2 \cup C_3$ is complete to $W$ (and therefore to $W_t$ as well). On the other hand, since $Y = \{y_1,\dots,y_t\}$ and $W_t \subseteq N_G[y_t] \subseteq \dots N_G[y_1]$, we see that $Y$ is complete to $W_t$. This proves that $Y \cup C_2 \cup C_3$ is indeed complete to $W_t$, and~(\ref{ref-tent-Y-simplicial}) follows. 

Next, we prove~(\ref{ref-tent-Z-simplicial}). Suppose that $Z \neq \emptyset$, and let $(Z_1,\dots,Z_{\ell})$ be a partition of $Z$ into cliques, pairwise anticomplete to each other, as per the definition of a tent. Using the definition of a tent, we fix an ordering $Z_1 = \{z_1,\dots,z_t\}$ of $Z_1$ such that $N_G[z_t] \subseteq \dots \subseteq N_G[z_1]$. Our goal is to show that $N_G[z_t]$ is a clique, which will immediately imply that $z_t$ is simplicial. By the definition of a tent, we have that $N_G(Z_1) \subseteq F_2 \cup F_3 \cup W$. Since $Z_1$ is a clique, we conclude that $Z_1 \subseteq N_G[z_t] \subseteq \dots \subseteq N_G[z_1] \subseteq Z_1 \cup F_2 \cup F_3 \cup W$. Set $W_t := N_G(z_t) \cap (F_2 \cup F_3 \cup W)$, so that $N_G[z_t] = Z_1 \cup W_t$. By the definition of a tent, $F_2 \cup F_3 \cup W$ is a clique,\footnote{Indeed, by the definition of a tent, $F_2,F_3,W$ are all cliques, at most one of $F_2,F_3$ is nonempty, and $W$ is complete to $F_2 \cup F_3$. Thus, $F_2 \cup F_3 \cup W$ is a clique.} and consequently, $W_t$ is a clique. Finally, since $Z_1 = \{z_1,\dots,z_t\}$ and $W_t \subseteq N_G[z_t] \subseteq \dots \subseteq N_G[z_1]$, we see that $Z_1$ is complete to $W_t$. This proves that $N_G[z_t] = Z_1 \cup W_t$ is a clique, and we deduce that $z_t$ is a simplicial vertex. This completes the proof of~(\ref{ref-tent-Z-simplicial}). 

Further,~(\ref{ref-tent-W-universal}) readily follows from the fact that, by the definition of a tent, $W$ is a clique, complete to $A_0 \cup A_1 \cup B_0 \cup B_1 \cup B_2 \cup B_3 \cup C_1 \cup C_2 \cup C_3 \cup F_2 \cup F_3 = V(Q) \setminus (W \cup Y \cup Z)$. 

It remains to prove~(\ref{ref-no-WYZ-thickening-T0T1}). Set $H := G \setminus (W \cup Y \cup Z)$, so that $V(H) = A_0 \cup A_1 \cup B_0 \cup B_1 \cup B_2 \cup B_3 \cup C_1 \cup C_2 \cup C_3 \cup F_2 \cup F_3$. If $F_2 \cup F_3 = \emptyset$, then it is clear that $H$ is a thickening of $T_0$ (this follows from the vertex labeling of $T_0$ from Figure~\ref{fig:T0T1-labeled} and from the definition of a tent). So, assume that $F_2 \cup F_3 \neq \emptyset$. By the definition of a tent, we see that exactly one of $F_2,F_3$ is nonempty. By symmetry, we may assume that $F_3 \neq \emptyset$ and $F_2 = \emptyset$ (otherwise, we simply swap the roles of $B_2,C_2,F_2$ and $B_3,C_3,F_3$, respectively). But then $H$ is a thickening of $T_1$ (this follows from the vertex labeling of $T_1$ from Figure~\ref{fig:T0T1-labeled} and from the definition of a tent). This proves~(\ref{ref-no-WYZ-thickening-T0T1}). 
\end{proof} 

\begin{proposition} \label{prop-tent-2P3C4C6C7-free-with-ind-T0} Every tent is $(2P_3,C_4,C_6,C_7)$-free and contains an induced $T_0$. 
\end{proposition}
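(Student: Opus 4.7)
The plan is to prove the three claims separately. By Proposition~\ref{prop-tent-simplicial-universal-T0T1}(d), the graph $H := G \setminus (W \cup Y \cup Z)$ is a thickening of $T_0$ or $T_1$. Proposition~\ref{prop-T0T1-2P3C4C6Free} then gives that $H$ is $(2P_3,C_4,C_6,C_7)$-free and contains an induced $T_0$ (both $T_0$ and $T_1$ do). So $G$ inherits an induced $T_0$, and it remains to verify that attaching $W$, $Y$, $Z$ preserves $(2P_3,C_4,C_6,C_7)$-freeness.

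For the $(C_4,C_6,C_7)$-part, my plan is to show every hole of $G$ lies in $H$. Using the ordering $y_1,\dots,y_t$ of $Y$ from the tent definition, the same calculation as in the proof of Proposition~\ref{prop-tent-simplicial-universal-T0T1}(a) shows that each $y_i$ is simplicial in $G \setminus \{y_{i+1},\dots,y_t\}$; an analogous calculation for each clique $Z_j$ separately (using that distinct $Z_j$'s are anticomplete and that $Y$ is anticomplete to $Z$) shows that each $z_i^j$ is simplicial in $G \setminus \{z_{i+1}^j,\dots,z_{t_j}^j\}$. Consequently, if a hole of $G$ used a vertex of $Y \cup Z$, picking the ``latest'' such vertex in its ordering would produce a simplicial vertex lying on a hole — impossible. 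By Proposition~\ref{prop-tent-simplicial-universal-T0T1}(c) every vertex of $W$ is universal in $G \setminus (Y \cup Z)$ and so cannot lie on a hole of $G$ either. Hence all holes of $G$ lie in $H$, which is $(C_4,C_6,C_7)$-free.

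For $2P_3$-freeness, I would assume for contradiction that $V_0 = \{p_1,p_2,p_3,q_1,q_2,q_3\}$ induces a $2P_3$ in $G$. If $V_0 \cap (Y \cup Z) = \emptyset$, then the same universal argument forces $V_0 \cap W = \emptyset$ (each vertex of a $2P_3$ has at least three nonneighbors in $V_0$), so $V_0 \subseteq V(H)$, contradicting the $2P_3$-freeness of $H$. Thus $V_0 \cap (Y \cup Z) \neq \emptyset$, and I would pick $v \in V_0 \cap (Y \cup Z)$ and do a case analysis on whether $v \in Y$ or $v \in Z$ and on $v$'s position (middle or endpoint) in its $P_3$. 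In every case the essential computation is to intersect the nonneighborhoods in $G$ of the two or three vertices of $V_0$ sharing $v$'s $P_3$; this common set always turns out to be contained in an induced subgraph that is a disjoint union of cliques — for instance $(Z \setminus Z_j) \setminus N_G(f)$ when $v \in Y$ (or $v \in Z$ with a $W$-neighbor $f$), and $(B_2 \cup C_2) \cup ((Z \setminus Z_j) \setminus N_G(f))$ or the symmetric $B_3 \cup C_3$ variant when the relevant $f$ lies in $F_2 \cup F_3$, or sets like $A_0 \cup A_1 \cup B_3 \cup Z$ or $B_0 \cup B_1 \cup B_3 \cup Z$ when the $P_3$ containing $v$ uses core vertices. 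By Proposition~\ref{prop-P3-free} each such graph is $P_3$-free, so it cannot contain the second $P_3$ $\{q_1,q_2,q_3\}$, yielding a contradiction.

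The main obstacle lies in this last step: Proposition~\ref{prop-G-x-disjoint-cliques-2P3-free} cannot be applied with $X = Y \cup Z$ because, for $v \in Y$, the set $V(G) \setminus N_G[v]$ contains both $A_0$ and $C_1$ in the same connected component (via $B_0$) while $A_0$ and $C_1$ are nonadjacent, so the components are not cliques. One must therefore exploit the $P_3$-structure of $\{q_1,q_2,q_3\}$ directly rather than mere connectedness. The bookkeeping is somewhat delicate because $W$-vertices are not necessarily adjacent to all of $Y$ or $Z$ — this reflects the nested ordering of closed neighborhoods in the tent definition — so the argument must track $N_G(y_i) \cap W$ and $N_G(z_i^j) \cap W$ in each sub-case; nevertheless, every sub-case ultimately funnels $\{q_1,q_2,q_3\}$ into a disjoint union of cliques and produces the desired contradiction.
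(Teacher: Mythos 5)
Your proposal is correct and follows essentially the same route as the paper: reduce to the thickening of $T_0$ or $T_1$ via Propositions~\ref{prop-tent-simplicial-universal-T0T1} and~\ref{prop-T0T1-2P3C4C6Free}, exclude $W \cup Y \cup Z$ from holes by the simplicial/universal argument, and exclude it from a hypothetical $2P_3$ by showing that the complement of the closed neighborhood of the $P_3$ containing an offending vertex is a disjoint union of cliques (the paper's Claims peel off $W$, $F_2 \cup F_3$, $Z$, and $Y$ in that order, and its final $Y$-case lands on exactly the sets $A_0 \cup A_1 \cup B_3$ and $B_0 \cup B_1 \cup B_3$ you name). Your observation that Proposition~\ref{prop-G-x-disjoint-cliques-2P3-free} cannot be applied with $X = Y \cup Z$ is accurate, and it is precisely why the paper also resorts to the finer case analysis there.
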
 
\begin{proof} 
Let $G$ be a tent, and let $(A_0,A_1;B_0,B_1,B_2,B_3;C_1,C_2,C_3;F_2,F_3;W;Y;Z)$ be a tent partition of $G$. By Proposition~\ref{prop-tent-simplicial-universal-T0T1}(\ref{ref-no-WYZ-thickening-T0T1}), $G \setminus (W \cup Y \cup Z)$ is a thickening of $T_0$ or $T_1$, and in particular, it contains an induced $T_0$ or $T_1$. Since $T_0$ is an induced subgraph of $T_1$, we now deduce that $G$ contains an induced $T_0$. It remains to show that $G$ is $(2P_3,C_4,C_6,C_7)$-free. 

\begin{adjustwidth}{1cm}{1cm}  
\begin{claim} \label{prop-tent-2P3C4C6C7-free-with-ind-T0-claim-YZ-hole}
No vertex of $Y \cup Z$ belongs to any hole of $G$. 
\end{claim} 
\end{adjustwidth} 
{\em Proof of Claim~\ref{prop-tent-2P3C4C6C7-free-with-ind-T0-claim-YZ-hole}.} Suppose otherwise, and let $X$ be the vertex set of a hole in $G$ such that $X \cap (Y \cup Z) \neq \emptyset$. Set $Y_X := Y \cap X$ and $Z_X := Z \cap X$; then $\emptyset \neq Y_X \cup Z_X \subseteq X$. Next, we set $G_X := G\setminus \big((Y \setminus Y_X) \cup (Z \setminus Z_X)\big)$, so that $G_X$ is a tent with an associated tent partition $(A_0,A_1;B_0,B_1,B_2,B_3;C_1,C_2,C_3;F_2;F_3;W;Y_X;Z_X)$, and $X$ induces a hole in $G_X$. Since $Y_X \cup Z_X \neq \emptyset$, Proposition~\ref{prop-tent-simplicial-universal-T0T1}(\ref{ref-tent-Y-simplicial}-\ref{ref-tent-Z-simplicial}) guarantees that some vertex of $Y_X \cup Z_X$ is simplicial in $G_X$. Since $Y_X \cup Z_X \subseteq X$, we deduce that some vertex of $X$ is simplicial in $G$, which is impossible since no vertex of a hole is simplicial.~$\blacklozenge$

\begin{adjustwidth}{1cm}{1cm}  
\begin{claim} \label{prop-tent-2P3C4C6C7-free-with-ind-T0-claim-W-hole}
No vertex of $W$ belongs to any hole of $G$. 
\end{claim} 
\end{adjustwidth} 
{\em Proof of Claim~\ref{prop-tent-2P3C4C6C7-free-with-ind-T0-claim-W-hole}.} By Claim~\ref{prop-tent-2P3C4C6C7-free-with-ind-T0-claim-YZ-hole}, no vertex of $Y \cup Z$ belongs to any hole of $G$. Therefore, it suffices to show that no hole of $G \setminus (Y \cup Z)$ belongs to $W$. But by Proposition~\ref{prop-tent-simplicial-universal-T0T1}(\ref{ref-tent-W-universal}), every vertex of $W$ is a universal vertex of $G \setminus (Y \cup Z)$. Since holes contain no universal vertices, we deduce that no hole of $G \setminus (Y \cup Z)$ contains a vertex of $W$.~$\blacklozenge$ 

\begin{adjustwidth}{1cm}{1cm}  
\begin{claim} \label{prop-tent-2P3C4C6C7-free-with-ind-T0-claim-C4C6C7-free}
$G$ is $(C_4,C_6,C_7)$-free. 
\end{claim} 
\end{adjustwidth} 
{\em Proof of Claim~\ref{prop-tent-2P3C4C6C7-free-with-ind-T0-claim-C4C6C7-free}.} In view of Claims~\ref{prop-tent-2P3C4C6C7-free-with-ind-T0-claim-YZ-hole} and~\ref{prop-tent-2P3C4C6C7-free-with-ind-T0-claim-W-hole}, it suffices to show that the graph $H := G \setminus (W \cup Y \cup Z)$ is $(C_4,C_6,C_7)$-free. By Proposition~\ref{prop-tent-simplicial-universal-T0T1}(\ref{ref-no-WYZ-thickening-T0T1}), $H$ is a thickening of $T_0$ or $T_1$, and so by Proposition~\ref{prop-T0T1-2P3C4C6Free}, $H$ is $(C_4,C_6,C_7)$-free.~$\blacklozenge$ 

\medskip 

In view of Claim~\ref{prop-tent-2P3C4C6C7-free-with-ind-T0-claim-C4C6C7-free}, it now remains to show that $G$ is $2P_3$-free. Suppose otherwise, and fix pairwise distinct vertices $x_0,x_1,x_2,y_0,y_1,y_2 \in V(G)$ such that $x_0,x_1,x_2$ and $y_0,y_1,y_2$ are induced two-edge paths in $G$, and such that $\{x_0,x_1,x_2\}$ and $\{y_0,y_1,y_2\}$ are pairwise anticomplete to each other in $G$.

\begin{adjustwidth}{1cm}{1cm}  
\begin{claim} \label{prop-tent-2P3C4C6C7-free-with-ind-T0-claim-W-no-2P3} $\{x_0,x_1,x_2,y_0,y_1,y_2\} \cap W = \emptyset$. 
\end{claim} 
\end{adjustwidth} 
{\em Proof of Claim~\ref{prop-tent-2P3C4C6C7-free-with-ind-T0-claim-W-no-2P3}.} Suppose otherwise. By symmetry, we may assume that there exists some $i \in \{0,1,2\}$ such that $x_i \in W$. Proposition~\ref{prop-tent-simplicial-universal-T0T1}(\ref{ref-tent-W-universal}) then guarantees that $x_i$ is a universal vertex of $G \setminus (Y \cup Z)$. Since $x_i$ is anticomplete to $\{y_0,y_1,y_2\}$, it follows that $y_0,y_1,y_2 \in Y \cup Z$. So, $y_0,y_1,y_2$ is an induced two-edge path in $G[Y \cup Z]$. But this is impossible since, by the definition of a tent, $Y \cup Z$ can be partitioned into cliques, pairwise anticomplete to each other.\footnote{Indeed, by the definition of a tent, $Y$ is a clique, anticomplete to $Z$. Moreover, $Z$ is either empty or can be partitioned into nonempty cliques, pairwise anticomplete to each other.}~$\blacklozenge$

\begin{adjustwidth}{1cm}{1cm}  
\begin{claim} \label{prop-tent-2P3C4C6C7-free-with-ind-T0-claim-F2F3-no-2P3}
$\{x_0,x_1,x_2,y_0,y_1,y_2\} \cap (F_2 \cup F_3) = \emptyset$. 
\end{claim} 
\end{adjustwidth} 
{\em Proof of Claim~\ref{prop-tent-2P3C4C6C7-free-with-ind-T0-claim-F2F3-no-2P3}.} Suppose otherwise. By symmetry, we may assume that there exists some $i \in \{0,1,2\}$ such that $x_i \in F_3$. In particular, $F_3 \neq \emptyset$, and so by the definition of a tent, $F_2 = Y = \emptyset$. Since $x_i$ is anticomplete to $\{y_0,y_1,y_2\}$, we deduce that $y_0,y_1,y_2 \in V(G) \setminus N_G[x_i]$. Now, since $x_i \in F_3$, it follows from the definition of a tent that $A_0 \cup A_1 \cup B_0 \cup B_1 \cup B_2 \cup C_1 \cup C_2 \cup F_3 \cup W \subseteq N_G[x_i]$. Since $F_2 = Y = \emptyset$, we deduce that $V(G) \setminus N_G[x_i] \subseteq B_3 \cup C_3 \cup Z$. So, $y_0,y_1,y_2$ is an induced two-edge path in $G[B_3 \cup C_3 \cup Z]$. But this is impossible, since by the definition of a tent, $B_3 \cup C_3 \cup Z$ can be partitioned into cliques, pairwise anticomplete to each other.\footnote{Indeed, by the definition of a tent, $B_3$ and $C_3$ are cliques, complete to each other and anticomplete to $Z$. (So, $B_2 \cup C_3$ is a clique, aticomplete to $Z$.) Meanwhile, $Z$ is either empty or can be partitioned into nonempty cliques, pairwise anticomplete to each other.}~$\blacklozenge$

\begin{adjustwidth}{1cm}{1cm}  
\begin{claim} \label{prop-tent-2P3C4C6C7-free-with-ind-T0-claim-Z-no-2P3}
$\{x_0,x_1,x_2,y_0,y_1,y_2\} \cap Z = \emptyset$. 
\end{claim} 
\end{adjustwidth} 
{\em Proof of Claim~\ref{prop-tent-2P3C4C6C7-free-with-ind-T0-claim-Z-no-2P3}.} Suppose otherwise. By symmetry, we may assume that $\{x_0,x_1,x_2\} \cap Z \neq \emptyset$. (In particular, $Z \neq \emptyset$.) By the definition of a tent, $Z$ can be partitioned into cliques, pairwise anticomplete to each other; so, $G[Z]$ is $2P_3$-free. Since $\{x_0,x_1,x_2\} \cap Z \neq \emptyset$, and since $x_0,x_1,x_2$ is an induced two-edge path in $G$, it follows that $\{x_0,x_1,x_2\} \cap N_G(Z) \neq \emptyset$. But since $N_G(Z) \subseteq F_2 \cup F_3 \cup W$ (by the definition of a tent), this contradicts Claims~\ref{prop-tent-2P3C4C6C7-free-with-ind-T0-claim-W-no-2P3} and~\ref{prop-tent-2P3C4C6C7-free-with-ind-T0-claim-F2F3-no-2P3}.~$\blacklozenge$

\begin{adjustwidth}{1cm}{1cm}  
\begin{claim} \label{prop-tent-2P3C4C6C7-free-with-ind-T0-claim-Y-no-2P3}
$\{x_0,x_1,x_2,y_0,y_1,y_2\} \cap Y = \emptyset$. 
\end{claim} 
\end{adjustwidth} 
{\em Proof of Claim~\ref{prop-tent-2P3C4C6C7-free-with-ind-T0-claim-Y-no-2P3}.} By Claims~\ref{prop-tent-2P3C4C6C7-free-with-ind-T0-claim-W-no-2P3},~\ref{prop-tent-2P3C4C6C7-free-with-ind-T0-claim-F2F3-no-2P3}, and~\ref{prop-tent-2P3C4C6C7-free-with-ind-T0-claim-Z-no-2P3}, we have that $\{x_0,x_1,x_2,y_0,y_1,y_2\} \cap (W \cup F_2 \cup F_3 \cup Z) = \emptyset$. So, we may assume that $F_2 = F_3 = W = Z = \emptyset$, for otherwise, instead of $G$, we consider the tent $G \setminus (F_2 \cup F_3 \cup W \cup Z)$, with an associated tent partition $(A_0,A_1;B_0,B_1,B_2,B_3;C_1,C_2,C_3;\emptyset;\emptyset;\emptyset;Y;\emptyset)$. 

Now, suppose that $\{x_0,x_1,x_2,y_0,y_1,y_2\} \cap Y \neq \emptyset$. By symmetry, we may assume that $\{y_0,y_1,y_3\} \cap Y \neq \emptyset$. Consequently, $\{y_0,y_1,y_2\} \cap (Y \cup C_2 \cup C_3) \neq \emptyset$. Since $Y \cup C_2 \cup C_3$ is a clique (by the definition of a tent), and since $y_0,y_1,y_2$ is an induced two-edge path in $G$, it follows that $\{y_0,y_1,y_2\} \cap N_G(Y \cup C_2 \cup C_3) \neq \emptyset$. Since $F_2 = F_3 = W = \emptyset$, the definition of a tent implies that $N_G(Y \cup C_2 \cup C_3) = C_1 \cup B_2 \cup B_3$. We now have that $\{y_0,y_1,y_2\}$ intersects both $Y$ and $C_1 \cup B_2 \cup B_3$; since $y_1$ is complete to $\{y_0,y_2\}$, whereas $Y$ and $C_1 \cup B_2 \cup B_3$ are anticomplete to each other, we may now assume by symmetry that $y_0 \in Y$ and $y_2 \in C_1 \cup B_2 \cup B_3$, and consequently, $y_1 \in C_2 \cup C_3$. Since $B_2$ is anticomplete to $C_3$, and since $B_3$ is anticomplete to $C_2$, we may now assume by symmetry that $y_1 \in C_2$ and $y_2 \in B_2 \cup C_1$, so that one of the following holds: 
\begin{enumerate}[(1)] 
\item $y_0 \in Y$, $y_1 \in C_2$, $y_2 \in B_2$; 
\item $y_0 \in Y$, $y_1 \in C_2$, $y_2 \in C_1$. 
\end{enumerate} 
In either case, we know that $x_0,x_1,x_2 \in V(G) \setminus N_G(\{y_0,y_1,y_2\})$. We now remind the reader that $F_2 = F_3 = W = Z = \emptyset$, a fact that we implicitly use below. 

Suppose first that~(1) holds. It then follows from the definition of a tent that $V(G) \setminus N_G(\{y_0,y_1,y_2\}) = B_0 \cup B_1 \cup B_3$. So, $x_0,x_1,x_2$ is an induced two-edge path in $G[B_0 \cup B_1 \cup B_2]$. But this is impossible since, by the definition of a tent, $B_0,B_1,B_2$ are disjoint cliques, pairwise anticomplete to each other. 

Suppose now that~(2) holds. It then follows from the definition of a tent that $V(G) \setminus N_G(\{y_0,y_1,y_2\}) = A_0 \cup A_1 \cup B_3$. So, $x_0,x_1,x_2$ is an induced two-edge path in $G[A_0 \cup A_1 \cup B_3]$. But this is impossible since, by the definition of a tent, $A_0 \cup A_1 \cup B_3$ is a clique.~$\blacklozenge$

\medskip 

By Claims~\ref{prop-tent-2P3C4C6C7-free-with-ind-T0-claim-W-no-2P3},~\ref{prop-tent-2P3C4C6C7-free-with-ind-T0-claim-Z-no-2P3}, and~\ref{prop-tent-2P3C4C6C7-free-with-ind-T0-claim-Y-no-2P3}, we have that $\{x_0,x_1,x_2,y_0,y_1,y_2\} \cap (W \cup Y \cup Z) = \emptyset$. So, $G \setminus (W \cup Y \cup Z)$ is not $2P_3$-free. But this contradicts Propositions~\ref{prop-T0T1-2P3C4C6Free} and~\ref{prop-tent-simplicial-universal-T0T1}(\ref{ref-no-WYZ-thickening-T0T1}): indeed, by Proposition~\ref{prop-tent-simplicial-universal-T0T1}(\ref{ref-no-WYZ-thickening-T0T1}), $G \setminus (W \cup Y \cup Z)$ is a thickening of $T_0$ or $T_1$, and so by Proposition~\ref{prop-T0T1-2P3C4C6Free}, $G \setminus (W \cup Y \cup Z)$ is $2P_3$-free. 
\end{proof}

\begin{lemma} \label{lemma-T0-tent} Let $G$ be a $(2P_3,C_4,C_6,C_7)$-free graph that contains an induced $T_0$. Then $G$ is a tent. 
\end{lemma}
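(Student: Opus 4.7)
The plan is to fix an induced $T_0 \subseteq G$ with vertices labeled as in Figure~\ref{fig:T0T1-labeled}, call it $T$, and to use Lemma~\ref{lemma-T0-attach} to carve $V(G)$ into the fourteen classes of a candidate tent partition. Every $x \in V(G)$ falls into exactly one of the five cases of Lemma~\ref{lemma-T0-attach}, and case~(a) further splits by the choice of $v \in V(T)$ realizing $N_G[x] \cap V(T) = N_T[v]$; this yields nine ``clone'' classes $A_0,A_1,B_0,\dots,B_3,C_1,C_2,C_3$, two ``half'' classes $F_2,F_3$ coming from case~(b), and the three classes $Y,Z,W$ from cases~(c), (d), (e) respectively. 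Since $a_i \in A_i$, $b_i \in B_i$, $c_i \in C_i$, the nine clone classes are nonempty by construction.

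Most of the clique conditions within these classes reduce immediately to Proposition~\ref{prop-non-adj-comp-clique}: each representative $V(T)$-vertex has two nonadjacent neighbors in $T$ that are common neighbors of the entire class (for instance $b_0,b_2$ for $a_0$, or $a_0,c_1$ for $b_0$), so any nonadjacent pair inside the class would yield an induced $C_4$. The exception is $Y$, whose only common $V(T)$-neighbors are the adjacent pair $c_2,c_3$. I would handle $Y$ directly by $2P_3$-freeness: for distinct nonadjacent $y_1,y_2 \in Y$, the path $y_1,c_3,y_2$ is an induced $P_3$ whose vertices are jointly anticomplete to the induced $P_3$ $a_1,a_0,b_0$ of $T$, yielding an induced $2P_3$ on $G[y_1,c_3,y_2,a_1,a_0,b_0]$, a contradiction.

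For the inter-class adjacency requirements, I would combine two complementary techniques. Many conditions admit short forbidden-subgraph arguments once the $V(T)$-neighborhoods are read off: the simultaneous nonemptiness of $F_2$ and $F_3$ produces the induced $C_4$ on $f_2,c_3,c_2,f_3$; the anticompleteness of $Z$ to the clone sets and to $Y$ follows from $2P_3$-freeness using, respectively, the triples $b_1,c_1,c_2$ inside $T$ (combined with $z,u,a_0$ for $u$ in a clone set) and $b_1,a_1,a_0$ inside $T$ (combined with $z,y,c_2$); the completeness of $W$ to $F_2\cup F_3$ follows from Proposition~\ref{prop-non-adj-comp-clique} with nonadjacent common neighbors $a_0,c_1$. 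The stubborn cases -- the prototype being $A_0$ complete to $B_0$, where $N_T[a_0]\cap N_T[b_0]=\{a_0,b_0\}$ leaves no nonadjacent pair of common neighbors -- I would dispatch by a \emph{swap-and-reapply} argument: if $u\in A_0$ and $v\in B_0$ were nonadjacent, then $T':=(T\setminus a_0)\cup\{u\}$ is another induced $T_0$ in $G$, and $v$'s attachment to $T'$ computes to the two-element set $\{b_0,c_1\}$, which matches none of the five patterns of Lemma~\ref{lemma-T0-attach} applied to $T'$, a contradiction. The same trick handles the remaining difficult pairs (completeness of $A_0$ to $B_2,B_3$, anticompleteness of $A_0$ to $B_1$, and the analogous conditions obtained by symmetry), as well as the constraint that $Y\neq\emptyset$ forces $F_2=F_3=\emptyset$.

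Finally, for the structural requirements on $Y$ and $Z$: any putative induced $P_3$ inside $G[Z]$ is anticomplete to the induced $P_3$ $a_0,b_0,c_1$ of $T$ and therefore yields an induced $2P_3$, so $G[Z]$ is $P_3$-free, and Proposition~\ref{prop-P3-free} gives the decomposition $Z=Z_1\cup\dots\cup Z_\ell$ into pairwise anticomplete cliques. The inter-class analysis already established gives $N_G(Z_i)\subseteq F_2\cup F_3\cup W$ and $N_G(Y)\setminus Y\subseteq C_2\cup C_3\cup W$; both of these target sets are cliques, so Proposition~\ref{prop-C4Free-CoBip}(b) applied to $Z_i$ (resp.\ $Y$) against its clique-neighborhood supplies the nested orderings $N_G[z^i_{t_i}]\subseteq\dots\subseteq N_G[z^i_1]$ and $N_G[y_t]\subseteq\dots\subseteq N_G[y_1]$ required in the tent definition. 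The main obstacle throughout is the bookkeeping overhead of the many adjacency checks; the swap-and-reapply step carries most of the technical weight, since for each difficult pair one must verify by hand that the induced attachment in the modified $T_0$ matches none of the five patterns allowed by Lemma~\ref{lemma-T0-attach}.
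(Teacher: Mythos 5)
Your overall strategy is the paper's: define the fourteen classes by attachment type via Lemma~\ref{lemma-T0-attach}, get most cliques from Proposition~\ref{prop-non-adj-comp-clique}, handle $Y$ with a direct $2P_3$, decompose $Z$ via Proposition~\ref{prop-P3-free}, and extract the nested orderings from Proposition~\ref{prop-C4Free-CoBip}. Even your swap-and-reapply trick appears in the paper, which uses exactly that substitution argument to show $Y$ is complete to $C_2,C_3$ and anticomplete to the clone classes, and that $Z$ is anticomplete to the clone classes; for the pairs you call stubborn (e.g.\ $A_0$ versus $B_0,B_1,B_2,B_3$) the paper instead exhibits short direct certificates (a $2P_3$ on $\{b_2,a_0',b_3\}\cup\{b_0',c_1,b_1\}$, the $4$-hole $a_0',b_1',c_1,b_0,a_0'$, the $6$-hole $a_0',a_1,b_2',c_2,c_1,b_0,a_0'$, etc.). Your swap argument is a legitimate alternative for those pairs, since $u\in A_0$ is a clone of $a_0$ and the resulting attachment $\{b_0,c_1\}$ indeed matches none of the five patterns.

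There are, however, two concrete missteps. First, the swap trick cannot establish that $Y\neq\emptyset$ forces $F_2=F_3=\emptyset$: a vertex of $Y$ has attachment $\{c_2,c_3\}$ and a vertex of $F_i$ has attachment $V(T)\setminus\{b_i,c_i\}$, and neither of these equals $N_T[v]$ for any $v\in V(T)$, so substituting either vertex into $T$ does not produce another induced $T_0$ and Lemma~\ref{lemma-T0-attach} cannot be re-applied. A direct argument is needed here (the paper's: if $y\in Y$ and $f_2\in F_2$ are adjacent then $y,c_2,c_1,f_2,y$ is a $4$-hole, and if not then $\{b_0,f_2,b_1\}$ and $\{y,c_2,b_2\}$ induce a $2P_3$). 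Second, your single certificate for ``$Z$ anticomplete to the clone sets'' --- the path $z,u,a_0$ against $b_1,c_1,c_2$ --- fails for most clone classes: $u\in B_1\cup C_1\cup C_2\cup C_3$ is not adjacent to $a_0$, and $u\in A_1\cup B_0\cup B_1\cup B_2\cup C_1\cup C_2\cup C_3$ has a neighbor among $b_1,c_1,c_2$, so the six vertices are neither two induced $P_3$'s nor anticomplete. The statement is true, but it needs either per-class certificates or (as in the paper) the swap argument, which does apply here because $u$ is a clone. Both issues are local and fixable without changing your architecture.
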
 
\begin{proof} Let $T$ be an induced subgraph of $G$ that is isomorphic to $T_0$, with the vertices of $T$ labeled as in Figure~\ref{fig:T0T1-labeled} (left). We now define sets $A_0,A_1,B_0,B_1,B_2,B_3,C_1,C_2,C_3,F_2,F_3,W,Y,Z$ as follows: 
\begin{itemize} 
\item for each $i \in \{0,1\}$, set $A_i := \big\{x \in V(G) \mid N_G[x] \cap V(T) = N_T[a_i]\big\}$; 
\item for each $i \in \{0,1,2,3\}$, set $B_i := \big\{x \in V(G) \mid N_G[x] \cap V(T) = N_T[b_i]\big\}$; 
\item for each $i \in \{1,2,3\}$, set $C_i := \big\{x \in V(G) \mid N_G[x] \cap V(T) = N_T[c_i]\big\}$; 
\item for each $i \in \{2,3\}$, set $F_i := \big\{x \in V(G) \mid N_G[x] \cap V(T) = V(T) \setminus \{b_i,c_i\}\big\}$; 
\item set $W := \big\{x \in V(G) \mid N_G[x] \cap V(T) = V(T)\big\}$; 
\item set $Y := \big\{x \in V(G) \mid N_G[x] \cap V(T) = \{c_2,c_3\}\big\}$; 
\item set $Z := \big\{x \in V(G) \mid N_G[x] \cap V(T) = \emptyset\big\}$. 
\end{itemize} 
We now prove a sequence of claims, which together establish that $G$ is a tent with an associated tent partition $(A_0,A_1;B_0,B_1,B_2,B_3;C_1,C_2,C_3;F_2,F_3;W;Y;Z)$. 

\begin{adjustwidth}{1cm}{1cm} 
\begin{claim} \label{lemma-T0-tent-claim-sets-partition} 
Sets $A_0,A_1,B_0,B_1,B_2,B_3,C_1,C_2,C_3,F_2,F_3,W,Y,Z$ form a partition of $V(G)$. Moreover, all the following hold: 
\begin{itemize} 
\item for each $i \in \{0,1\}$, $a_i \in A_i$; 
\item for each $i \in \{0,1,2,3\}$, $b_i \in B_i$; 
\item for each $i \in \{1,2,3\}$, $c_i \in C_i$. 
\end{itemize}
In particular, $A_0,A_1,B_0,B_1,B_2,B_3,C_1,C_2,C_3$ are all nonempty. 
\end{claim} 
\end{adjustwidth} 
\noindent 
{\em Proof of Claim~\ref{lemma-T0-tent-claim-sets-partition}.} The fact that sets $A_0,A_1,B_0,B_1,B_2,B_3,C_1,C_2,C_3,F_2,F_3,W,Y,Z$ form a partition of $V(G)$ follows immediately from Lemma~\ref{lemma-T0-attach}. The rest is immediate from the definitions of the relevant sets.~$\blacklozenge$

\begin{adjustwidth}{1cm}{1cm}
\begin{claim} \label{lemma-T0-tent-claim-sets-cliques}
Sets $A_0,A_1,B_0,B_1,B_2,B_3,C_1,C_2,C_3,F_2,F_3,W,Y$ are all cliques. Moreover, all the following hold: 
\begin{itemize} 
\item at most one of $F_2,F_3,Y$ is nonempty; 
\item $A_0$ is complete to $A_1$; 
\item $W$ is complete to $A_0,A_1,B_0,B_1,B_2,B_3,C_1,C_2,C_3,F_2,F_3$. 
\end{itemize} 
\end{claim} 
\end{adjustwidth} 
\noindent 
{\em Proof of Claim~\ref{lemma-T0-tent-claim-sets-cliques}.} First, with Proposition~\ref{prop-non-adj-comp-clique} in mind, we observe the following: 
\begin{itemize} 
\item distinct, nonadjacent vertices $b_2,b_3$ are both complete to $A_0 \cup A_1 \cup W$; 
\item distinct, nonadjacent vertices $a_0,c_1$ are both complete to $B_0 \cup W$; 
\item for all $i \in \{1,2,3\}$, distinct, nonadjacent $a_1,c_i$ are both complete to $B_i \cup W$; 
\item distinct, nonadajcent vertices $b_0,b_1$ are both complete to $C_1 \cup W$; 
\item for all $i \in \{2,3\}$, distinct, nonadjacent vertices $b_i,c_1$ are both complete to $C_i \cup W$; 
\item distinct, nonadjacent vertices $a_1,c_1$ are complete to $F_2 \cup F_3 \cup W$. 
\end{itemize} 
So, by Proposition~\ref{prop-non-adj-comp-clique}, all the following are cliques: $A_0 \cup A_1 \cup W$, $B_0 \cup W$, $B_1 \cup W$, $B_2 \cup W$, $B_3 \cup W$, $C_1 \cup W$, $C_2 \cup W$, $C_3 \cup W$, $F_2 \cup F_3 \cup W$. Thus, sets $A_0,A_1,B_0,B_1,B_2,B_3,C_1,C_2,C_3,F_2,F_3,W$ are all cliques,  $A_0$ is complete to $A_1$, and $W$ is complete to $A_0,A_1,B_0,B_1,B_2,B_3,C_1,C_2,C_3,F_2,F_3$. Furthermore, $Y$ is clique, for if some distinct $y,y' \in Y$ were nonadjacent, then $G[y,c_3,y',b_1,a_1,b_2]$ would be a $2P_3$, a contradiction. It remains to show that at most one of $F_2,F_3,Y$ is nonempty. 

Suppose first that $F_2$ and $F_3$ are both nonempty. Fix $f_2 \in F_2$ and $f_3 \in F_3$. We saw above that $F_2 \cup F_3 \cup W$ is a clique, and consequently, $f_2,f_3$ are adjacent. But now $f_2,c_3,c_2,f_3,f_2$ is a 4-hole in $G$, a contradiction. This proves that at most one of $F_2,F_3$ is nonempty. 

Next, suppose that $F_2$ and $Y$ are both nonempty, and fix some $f_2 \in F_2$ and $y \in Y$. If $y,f_2$ are adjacent, then $y,c_2,c_1,f_2,y$ is a 4-hole, a contradiction. On the other hand, if $y,f_2$ are nonadjacent, then $G[b_0,f_2,b_1,y,c_2,b_2]$ is a $2P_3$, again a contradiction. This proves that at most one of $F_2,Y$ is empty. Similarly, at most one of $F_3,Y$ is empty, and we are done.~$\blacklozenge$


\begin{adjustwidth}{1cm}{1cm} 
\begin{claim} \label{lemma-T0-tent-claim-A0-adj} 
$A_0$ is complete to $B_0,B_2,B_3$ and anticomplete to $B_1,C_1,C_2,C_3$. 
\end{claim} 
\end{adjustwidth} 
{\em Proof of Claim~\ref{lemma-T0-tent-claim-A0-adj}.} If some $a_0' \in A_0$ and $b_0' \in B_0$ are nonadjacent, then $G[b_2,a_0',b_3,b_0',c_1,b_1]$ is a $2P_3$, a contradiction. Therefore, $A_0$ is complete to $B_0$. 

If some $a_0' \in A_0$ and $b_2' \in B_2$ are nonadjacent, then $a_0',a_1,b_2',c_2,c_1,b_0,a_0'$ is a 6-hole in $G$, a contradiction. Therefore, $A_0$ is complete to $B_2$, and analogously, $A_0$ is complete to $B_3$. 

If some $a_0' \in A_0$ and $b_1' \in B_1$ are adjacent, then $a_0',b_1',c_1,b_0,a_0'$ is a 4-hole in $G$, a contradiction. Therefore, $A_0$ is anticomplete to $B_1$. 

If some $a_0' \in A_0$ and $c_1' \in C_1$ are adjacent, then $a_0',c_1',c_2,b_2,a_0'$ is a 4-hole in $G$, a contradiction. Therefore, $A_0$ is anticomplete to $C_1$. 

If some $a_0' \in A_0$ and $c_2' \in C_2$ are adjacent, then $a_0',b_0,c_1,c_2',a_0'$ is a 4-hole in $G$, a contradiction. Therefore, $A_0$ is anticomplete to $C_2$, and analogously, $A_0$ is anticomplete to $C_3$.~$\blacklozenge$

\begin{adjustwidth}{1cm}{1cm} 
\begin{claim} \label{lemma-T0-tent-claim-A1-adj} 
$A_1$ is complete to $B_1,B_2,B_3$ and anticomplete to $B_0,C_1,C_2,C_3$. 
\end{claim} 
\end{adjustwidth} 
{\em Proof of Claim~\ref{lemma-T0-tent-claim-A1-adj}.} The proof of this claim is completely analogous to the proof of Claim~\ref{lemma-T0-tent-claim-A0-adj} (we simply swap the roles of $A_0$ and $A_1$, and we swap the roles of $B_0$ and $B_1$).~$\blacklozenge$ 

\begin{adjustwidth}{1cm}{1cm}
\begin{claim} \label{lemma-T0-tent-claim-Bi-anticomp} 
$B_0,B_1,B_2,B_3$ are pairwise anticomplete to each other. 
\end{claim} 
\end{adjustwidth} 
{\em Proof of Claim~\ref{lemma-T0-tent-claim-Bi-anticomp}.} If some $b_0' \in B_0$ and $b_1' \in B_1$ are adjacent, then $b_0',b_1',a_1,a_0,b_0'$ is a 4-hole in $G$, a contradiction. Therefore, $B_0$ is anticomplete to $B_1$. 

If for some $i \in \{2,3\}$, some $b_0' \in B_0$ is adjacent to some $b_i' \in B_i$, then $b_0',b_i',c_i,c_1,b_0'$ is a 4-hole in $G$, a contradiction. Thus, $B_0$ is anticomplete to $B_2,B_3$. 

If for some distinct $i,j \in \{1,2,3\}$, some $b_i' \in B_i$ and $b_j' \in B_j$ are adjacent, then $b_i',b_j',c_j,c_i,b_i'$ is a 4-hole in $G$, a contradiction. Therefore, $B_1,B_2,B_3$ are pairwise anticomplete to each other.~$\blacklozenge$ 

\begin{adjustwidth}{1cm}{1cm} 
\begin{claim} \label{lemma-T0-tent-claim-Ci-comp} 
$C_1,C_2,C_3$ are pairwise complete to each other. 
\end{claim} 
\end{adjustwidth} 
{\em Proof of Claim~\ref{lemma-T0-tent-claim-Ci-comp}.} Suppose otherwise, and fix distinct $i,j \in \{1,2,3\}$ such that some $c_i' \in C_i$ and $c_j' \in C_j$ are nonadjacent to each other. Let $k$ be the unique index in $\{1,2,3\} \setminus \{i,j\}$. Then $c_i',c_k,c_j',b_j,a_1,b_i,c_i'$ is a 6-hole in $G$, a contradiction.~$\blacklozenge$

\begin{adjustwidth}{1cm}{1cm} 
\begin{claim} \label{lemma-T0-tent-claim-C1-Bi} 
$C_1$ is complete to $B_0,B_1$ and anticomplete to $B_2,B_3$. 
\end{claim} 
\end{adjustwidth} 
{\em Proof of Claim~\ref{lemma-T0-tent-claim-C1-Bi}.} If some $c_1' \in C_1$ and $b_0' \in B_0$ are nonadajcent, then $G[b_0',a_0,b_2,b_1,c_1',c_3]$ is a $2P_3$, a contradiction. Therefore, $C_1$ is complete to $B_0$, and analogously, $C_1$ is complete to $B_1$. 

If some $c_1' \in C_1$ and $b_2' \in B_2$ are adjacent, then $c_1',b_2',a_1,b_1,c_1'$ is a 4-hole in $G$, a contradiction. Therefore, $C_1$ is anticomplete to $B_2$, and analogously, $C_1$ is anticomplete to $B_3$.~$\blacklozenge$ 

\begin{adjustwidth}{1cm}{1cm}
\begin{claim} \label{lemma-T0-tent-claim-C2-Bi} 
$C_2$ is complete to $B_2$ and anticomplete to $B_0,B_1,B_3$. 
\end{claim} 
\end{adjustwidth}
{\em Proof of Claim~\ref{lemma-T0-tent-claim-C2-Bi}.} If some $c_2' \in C_2$ and $b_2' \in B_2$ are nonadjacent, then $G[b_2',a_1,b_3,b_0,c_1,c_2']$ is a $2P_3$, a contradiction. Therefore, $C_2$ is complete to $B_2$. 

If some $c_2' \in C_2$ is adjacent to some $b_0' \in B_0$, then $c_2',b_0',a_0,b_2,c_2'$ is a 4-hole in $G$, a contradiction. Therefore, $C_2$ is anticomplete to $B_0$. 

If for some $i \in \{1,3\}$, some $c_2' \in C_2$ is adjacent to some $b_i' \in B_i$, then $c_2',b_i',a_1,b_2,c_2'$ is a 4-hole in $G$, a contradiction. Therefore, $C_2$ is anticomplete to $B_1,B_3$.~$\blacklozenge$ 

\begin{adjustwidth}{1cm}{1cm}
\begin{claim} \label{lemma-T0-tent-claim-C3-Bi} 
$C_3$ is complete to $B_3$ and anticomplete to $B_0,B_1,B_2$. 
\end{claim} 
\end{adjustwidth}
{\em Proof of Claim~\ref{lemma-T0-tent-claim-C3-Bi}.} The proof of this claim is analogous to the proof of Claim~\ref{lemma-T0-tent-claim-C2-Bi}.~$\blacklozenge$

\begin{adjustwidth}{1cm}{1cm}
\begin{claim} \label{lemma-T0-tent-claim-F2} 
$F_2$ is complete to $A_0,A_1,B_0,B_1,B_3,C_1,C_3$ and anticomplete to $B_2,C_2$. 
\end{claim} 
\end{adjustwidth}
{\em Proof of Claim~\ref{lemma-T0-tent-claim-F2}.} First, with Proposition~\ref{prop-non-adj-comp-clique} in mind, we observe the following: 
\begin{itemize} 
\item for all $i \in \{0,1\}$, distinct, nonadjacent vertices $b_i,b_3$ are complete to $F_2 \cup A_i$; 
\item distinct, nonadjacent vertices $a_0,c_1$ are complete to $F_2 \cup B_0$; 
\item for all $i \in \{1,3\}$, distinct, nonadjacent vertices $a_1,c_i$ are complete to $F_2 \cup B_i$; 
\item distinct, nonadjacent vertices $b_1,c_3$ are complete to $F_2 \cup C_1$; 
\item distinct, nonadjacent vertices $b_3,c_1$ are complete to $F_2 \cup C_3$. 
\end{itemize} 
Proposition~\ref{prop-non-adj-comp-clique} now guarantees that $F_2 \cup A_0$, $F_2 \cup A_1$, $F_2 \cup B_0$,  $F_2 \cup B_1$, $F_2 \cup B_3$, $F_2 \cup C_1$, $F_2 \cup C_3$ are all cliques. Consequently, $F_2$ is complete to $A_0,A_1,B_0,B_1,B_3,C_1,C_3$. 

Next, if some $f_2 \in F_2$ and $b_2' \in B_2$ are adjacent, then $f_2,b_2',c_2,c_3,f_2$ is a 4-hole in $G$, a contradiction. Thus, $F_2$ is anticomplete to $B_2$. 

Finally, if some $f_2 \in F_2$ and $c_2' \in C_2$ are adjacent, then $f_2,c_2',b_2,a_1,f_2$ is a 4-hole in $G$, a contradiction. Thus, $F_2$ is anticomplete to $C_2$.~$\blacklozenge$ 

\begin{adjustwidth}{1cm}{1cm}
\begin{claim} \label{lemma-T0-tent-claim-F3} 
$F_3$ is complete to $A_0,A_1,B_0,B_1,B_2,C_1,C_2$ and anticomplete to $B_3,C_3$. 
\end{claim} 
\end{adjustwidth}
{\em Proof of Claim~\ref{lemma-T0-tent-claim-F3}.} The proof of this claim is analogous to the proof of Claim~\ref{lemma-T0-tent-claim-F2}.~$\blacklozenge$

\begin{adjustwidth}{1cm}{1cm}
\begin{claim} \label{lemma-T0-tent-claim-Y} 
$Y$ is complete to $C_2,C_3$ and anticomplete to $A_0,A_1,B_0,B_1,B_2,B_3,C_1$. 
\end{claim} 
\end{adjustwidth} 
{\em Proof of Claim~\ref{lemma-T0-tent-claim-Y}.} If some $y \in Y$ is nonadjacent to some $c_2' \in C_2$, then we observe that $T' := G[(V(T) \setminus \{c_2\}) \cup \{c_2'\}]$ is a $T_0$, and $y$ has a unique neighbor in $V(T')$, contrary to Lemma~\ref{lemma-T0-attach}. Therefore, $Y$ is complete to $C_2$, and analogously, $Y$ is complete to $C_3$. 

It remains to show that $Y$ is anticomplete to $A_0,A_1,B_0,B_1,B_2,B_3,C_1$. Suppose otherwise, and fix $X \in \{A_0,A_1,B_0,B_1,B_2,B_3,C_1\}$ such that some $y \in Y$ and $x' \in X$ are adjacent. Let $x$ be the unique vertex in $X \cap V(T)$. Then $T' := G[(V(T) \setminus \{x\}) \cup \{x'\}]$ is a $T_0$, and $y$ has exactly three neighbors (namely, $x',c_2,c_3$) in it. It then follows from Lemma~\ref{lemma-T0-attach} that there exists some $v \in V(T')$ such that $N_G(y) \cap V(T') = N_{T'}[v]$. But now $N_{T'}[v] = \{x',c_2,c_3\}$, and consequently, $v \in \{x',c_2,c_3\}$. Since $d_{T'}(c_2) = d_{T'}(c_3) = 3$, we deduce that $v = x'$, and consequently, $N_{T'}(x') = \{c_2,c_3\}$. But then $N_T(x) = \{c_2,c_3\}$, which is impossible since no vertex of $T$ has neighborhood (in $T$) precisely $\{c_2,c_3\}$.~$\blacklozenge$ 

\begin{adjustwidth}{1cm}{1cm}
\begin{claim} \label{lemma-T0-tent-claim-Z} 
$Z$ is anticomplete to $A_0,A_1,B_0,B_1,B_2,B_3,C_1,C_2,C_3,Y$. 
\end{claim} 
\end{adjustwidth} 
{\em Proof of Claim~\ref{lemma-T0-tent-claim-Z}.} We first show that $Z$ is anticomplete to $A_0,A_1,B_0,B_1,B_2,B_3,C_1,C_2,C_3$. Suppose otherwise, and fix $X \in \{A_0,A_1,B_0,B_1,B_2,B_3,C_1,C_2,C_3\}$ such that some $z \in Z$ and $x' \in X$ are adjacent. Let $x$ be the unique vertex in $V(T) \cap X$. Then $T' := G[(V(T) \setminus \{x\}) \cup \{x'\}]$ is a $T_0$, and $y$ has a unique neighbor in $V(T')$, contrary to Lemma~\ref{lemma-T0-attach}. This proves that $Z$ is indeed anticomplete to $A_0,A_1,B_0,B_1,B_2,B_3,C_1,C_2,C_3$. 

It remains to show that $Z$ is anticomplete to $Y$. Suppose otherwise, and fix adjacent $z \in Z$ and $y \in Y$. But then $G[z,y,c_3,b_0,a_0,b_2]$ is a $2P_3$, a contradiction.~$\blacklozenge$ 

\begin{adjustwidth}{1cm}{1cm} 
\begin{claim} \label{lemma-T0-tent-claim-Y-order} 
$Y$ can be ordered as $Y = \{y_1,\dots,y_t\}$ so that $N_G[y_t] \subseteq \dots \subseteq N_G[y_1]$. 
\end{claim} 
\end{adjustwidth} 
{\em Proof of Claim~\ref{lemma-T0-tent-claim-Y-order}.} We may assume that $Y \neq \emptyset$, for otherwise, the result is immediate (with $t = 0$). By Claim~\ref{lemma-T0-tent-claim-sets-cliques}, $Y$ is a clique, and $F_2 = F_3 = \emptyset$. So, by Claim~\ref{lemma-T0-tent-claim-sets-partition}, we have that sets $A_0,A_1,B_0,B_1,B_2,B_3,C_1,C_2,C_3,W,Y,Z$ form a partition of $V(G)$; this, together with Claims~\ref{lemma-T0-tent-claim-Y} and~\ref{lemma-T0-tent-claim-Z}, and together with the fact that $Y$ is a clique, implies that for all $y \in Y$, we have that $C_2 \cup C_3 \cup Y \subseteq N_G[y] \subseteq C_2 \cup C_3 \cup W \cup Y$. Therefore, it suffices to show that $Y$ can be ordered as $Y = \{y_1,\dots,y_t\}$ so that $N_G(y_t) \cap W \subseteq \dots \subseteq N_G(y_1) \cap W$. But since $Y$ and $W$ are both cliques (by Claim~\ref{lemma-T0-tent-claim-sets-cliques}), this follows immediately from Proposition~\ref{prop-C4Free-CoBip}.~$\blacklozenge$

\begin{adjustwidth}{1cm}{1cm} 
\begin{claim} \label{lemma-T0-tent-claim-Z-order} 
If $Z \neq \emptyset$, then $Z$ can be partitioned into nonempty cliques $Z_1,\dots,Z_{\ell}$, pairwise anticomplete to each other, and moreover, for each $i \in \{1,\dots,\ell\}$, $Z_i$ can be ordered as $Z_i = \{z_1^i,\dots,z_{t_i}^i\}$ so that $N_G[z_{t_i}^i] \subseteq \dots \subseteq N_G[z_1^i]$. 
\end{claim} 
\end{adjustwidth} 
{\em Proof of Claim~\ref{lemma-T0-tent-claim-Z-order}.} Assume that $Z \neq \emptyset$. First, since vertices $b_1,a_1,b_2$ induce a $P_3$ in $G$, and since they are all anticomplete to $Z$, we see that $G[Z]$ is $P_3$-free (otherwise, $G$ would contain an induced $2P_3$, a contradiction). Therefore, by Proposition~\ref{prop-P3-free}, every component of $G[Z]$ is a complete graph, or in other words, $Z$ can be partitioned into nonempty cliques $Z_1,\dots,Z_{\ell}$, pairwise anticomplete to each other. 

Now, fix $i \in \{1,\dots,\ell\}$. By Claims~\ref{lemma-T0-tent-claim-sets-partition} and~\ref{lemma-T0-tent-claim-Z}, we see that every vertex $z \in Z_i$ satisfies $Z_i \subseteq N_G[z] \subseteq F_2 \cup F_3 \cup W \cup Z_i$. Thus, it is enough to show that $Z_i$ can be ordered as $Z_i = \{z_1^i,\dots,z_{t_i}^i\}$ so that $N_G(z_{t_i}^i) \cap (F_2 \cup F_3 \cup W) \subseteq \dots \subseteq N_G(z_1^i) \cap (F_2 \cup F_3 \cup W)$. In view of Proposition~\ref{prop-C4Free-CoBip}, it suffices to show $F_2 \cup F_3 \cup W$ is a clique (since we already know that $Z_i$ is a clique). But this follows immediately from Claim~\ref{lemma-T0-tent-claim-sets-cliques}.~$\blacklozenge$ 

\medskip 

Our proof is now complete: Claims~\ref{lemma-T0-tent-claim-sets-partition}-\ref{lemma-T0-tent-claim-Z-order} together guarantee that $G$ is indeed a tent, and that $(A_0,A_1;B_0,B_1,B_2,B_3;C_1,C_2,C_3;F_2,F_3;W;Y;Z)$ is an associated tent partition. 
\end{proof}

\begin{theorem} \label{thm-T0-tent} For any graph $G$, the following are equivalent: 
\begin{itemize} 
\item $G$ is $(2P_3,C_4,C_6,C_7)$-free and contains an induced $T_0$; 
\item $G$ is a tent. 
\end{itemize} 
\end{theorem}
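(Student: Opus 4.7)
The plan is to observe that this theorem is an immediate consequence of the two main results of the present section, both of which have been established above. Specifically, the forward direction ``tent implies $(2P_3,C_4,C_6,C_7)$-free with induced $T_0$'' is exactly the content of Proposition~\ref{prop-tent-2P3C4C6C7-free-with-ind-T0}, while the reverse direction ``$(2P_3,C_4,C_6,C_7)$-free with induced $T_0$ implies tent'' is exactly the content of Lemma~\ref{lemma-T0-tent}. So the proof reduces to a single line citing these two results.

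In terms of where the real work lies, both of these results have already absorbed all the substantive case analysis, so there is no remaining obstacle to the theorem itself. For the record: the proof of Proposition~\ref{prop-tent-2P3C4C6C7-free-with-ind-T0} proceeded by showing that no vertex of $Y \cup Z \cup W$ lies in any hole (reducing the $(C_4,C_6,C_7)$-free check to the core $G \setminus (W \cup Y \cup Z)$, which is a thickening of $T_0$ or $T_1$ by Proposition~\ref{prop-tent-simplicial-universal-T0T1}(\ref{ref-no-WYZ-thickening-T0T1})), and then ruling out $2P_3$'s by a case analysis on which of the pieces $W$, $F_2 \cup F_3$, $Z$, $Y$ could contain a vertex of a putative induced $2P_3$. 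The proof of Lemma~\ref{lemma-T0-tent}, in turn, used the attachment classification in Lemma~\ref{lemma-T0-attach} to \emph{define} the sets $A_0,A_1,B_0,\dots,Z$ as a partition of $V(G)$, and then verified the defining adjacencies of a tent through a long sequence of short $2P_3$/$C_4$/$C_6$ obstructions.

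Thus the proposed proof is simply: ``By Proposition~\ref{prop-tent-2P3C4C6C7-free-with-ind-T0}, every tent is $(2P_3,C_4,C_6,C_7)$-free and contains an induced $T_0$. Conversely, by Lemma~\ref{lemma-T0-tent}, every $(2P_3,C_4,C_6,C_7)$-free graph that contains an induced $T_0$ is a tent. This proves the equivalence.''
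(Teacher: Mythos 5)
Your proposal is correct and matches the paper's proof exactly: the paper also deduces Theorem~\ref{thm-T0-tent} immediately from Proposition~\ref{prop-tent-2P3C4C6C7-free-with-ind-T0} (one direction) and Lemma~\ref{lemma-T0-tent} (the other). Nothing further is needed.
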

\begin{proof} 
This follows immediately from Proposition~\ref{prop-tent-2P3C4C6C7-free-with-ind-T0} and Lemma~\ref{lemma-T0-tent}. 
\end{proof}

\begin{corollary} \label{cor-T0-tent} For any graph $G$, the following are equivalent: 
\begin{enumerate}[(a)] 
\item $G$ is $(2P_3,C_4,C_6,C_7)$-free, contains an induced $T_0$, and contains no simplicial vertices; 
\item $G$ has exactly one nontrivial anticomponent, and this anticomponent is a thickening of~$T_0$ or~$T_1$; 
\item $G$ can be obtained from a thickening of~$T_0$ or~$T_1$ by possibly adding universal vertices to it. 
\end{enumerate} 
\end{corollary}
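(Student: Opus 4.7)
The plan is to establish the three implications in the cycle (b)$\Leftrightarrow$(c), (c)$\Rightarrow$(a), (a)$\Rightarrow$(c), using mostly the machinery already developed in this section and in Section~\ref{subsec:SimpleResults}.

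For (b)$\Leftrightarrow$(c), the two statements differ only in their formulation, so I would simply observe that, by Proposition~\ref{prop-T0T1-2P3C4C6Free} and Proposition~\ref{prop-thickening}(\ref{ref-prop-thickening-H-anticonn}), every thickening of $T_0$ or $T_1$ is anticonnected and has at least two vertices, which allows us to apply Proposition~\ref{prop-non-trivial-anticomp-univ-vertices} directly.

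For (c)$\Rightarrow$(a), suppose $G$ is obtained from a thickening $Q$ of $T_0$ or $T_1$ by possibly adding universal vertices. Proposition~\ref{prop-T0T1-2P3C4C6Free} tells us that $Q$ is $(2P_3,C_4,C_6,C_7)$-free, has no simplicial vertex, and (since $T_0$ is an induced subgraph of both $T_0$ and $T_1$) contains an induced $T_0$. Thus $G$ contains an induced $T_0$ as well. Since none of $2P_3,C_4,C_6,C_7$ has a universal vertex, Proposition~\ref{prop-H-free-no-universal} (applied with $Q$ playing the role of the unique nontrivial anticomponent of $G$) lifts $(2P_3,C_4,C_6,C_7)$-freeness from $Q$ to $G$. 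Finally, Proposition~\ref{prop-one-nontrivial-anticomp-simplicial} guarantees that $G$ has a simplicial vertex if and only if $Q$ does, so $G$ has no simplicial vertex. This gives~(a).

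For (a)$\Rightarrow$(c), Theorem~\ref{thm-T0-tent} gives us that $G$ is a tent with some associated tent partition $(A_0,A_1;B_0,B_1,B_2,B_3;C_1,C_2,C_3;F_2,F_3;W;Y;Z)$. The key step is to use the absence of simplicial vertices in $G$ together with Proposition~\ref{prop-tent-simplicial-universal-T0T1}(\ref{ref-tent-Y-simplicial}-\ref{ref-tent-Z-simplicial}) to conclude that $Y=Z=\emptyset$. Once this is done, Proposition~\ref{prop-tent-simplicial-universal-T0T1}(\ref{ref-no-WYZ-thickening-T0T1}) identifies $G\setminus W=G\setminus(W\cup Y\cup Z)$ as a thickening of $T_0$ or $T_1$, while Proposition~\ref{prop-tent-simplicial-universal-T0T1}(\ref{ref-tent-W-universal}) tells us that $W$ is a (possibly empty) clique complete to $V(G)\setminus W$, i.e.\ every vertex of $W$ is a universal vertex of $G$. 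Hence $G$ is obtained from the thickening $G\setminus W$ by possibly adding universal vertices, as required.

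I do not anticipate any real obstacle: once the tent structure from Theorem~\ref{thm-T0-tent} is in hand, everything is an immediate application of preceding propositions, the only non-cosmetic point being the forced emptiness of $Y$ and $Z$ when $G$ has no simplicial vertex.
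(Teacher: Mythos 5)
Your proposal is correct and follows essentially the same route as the paper: establish (b)$\Leftrightarrow$(c) via Propositions~\ref{prop-T0T1-2P3C4C6Free} and~\ref{prop-non-trivial-anticomp-univ-vertices}, deduce (a)$\Rightarrow$(c) from Theorem~\ref{thm-T0-tent} and Proposition~\ref{prop-tent-simplicial-universal-T0T1} (with the forced emptiness of $Y$ and $Z$ being the key observation), and lift the properties of the thickening to $G$ via Propositions~\ref{prop-one-nontrivial-anticomp-simplicial} and~\ref{prop-H-free-no-universal} for the remaining implication. The only cosmetic difference is that you close the cycle with (c)$\Rightarrow$(a) where the paper uses (b)$\Rightarrow$(a), which is immaterial given the established equivalence of (b) and (c).
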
 
\begin{proof} 
Fix a graph $G$. By Propositions~\ref{prop-non-trivial-anticomp-univ-vertices} and~\ref{prop-T0T1-2P3C4C6Free}, (b) and (c) are equivalent.\footnote{Indeed, by Proposition~\ref{prop-T0T1-2P3C4C6Free}, any thickening of $T_0$ or $T_1$ is anticonnected (and it obviously contains at least two vertices), and so by Proposition~\ref{prop-non-trivial-anticomp-univ-vertices}, (b) and (c) are equivalent.} We will complete the proof by showing that (a) implies (c), and that (b) implies (a). 

Suppose first that (a) holds, so that $G$ is $(2P_3,C_4,C_6,C_7)$-free, contains an induced $T_0$, and contains no simplicial vertices. Since $G$ is $(2P_3,C_4,C_6,C_7)$-free, Theorem~\ref{thm-T0-tent} guarantees that $G$ is a tent. Let $(A_0,A_1;B_0,B_1,B_2,B_3;C_1,C_2,C_3;F_2,F_3;W;Y;Z)$ be a tent partition of the tent $G$. The result now readily follows from Proposition~\ref{prop-tent-simplicial-universal-T0T1}. Indeed, since $G$ contains no simplicial vertices, Proposition~\ref{prop-tent-simplicial-universal-T0T1}(\ref{ref-tent-Y-simplicial}-\ref{ref-tent-Z-simplicial}) guarantees that $Y = Z = \emptyset$; but then Proposition~\ref{prop-tent-simplicial-universal-T0T1}(\ref{ref-tent-W-universal}) guarantees that all vertices in $W$ are universal in $G$, and Proposition~\ref{prop-tent-simplicial-universal-T0T1}(\ref{ref-no-WYZ-thickening-T0T1}) guarantees that $G \setminus W$ is a thickening of $T_0$ or $T_1$. Thus, (c) holds. 

Suppose now that (b) holds. Then $G$ has exactly one nontrivial anticomponent, and this anticomponent, call it $Q$, is a thickening of~$T_0$ or~$T_1$. Since $T_0$ is an induced subgraph of $T_1$, we see that $Q$ (and therefore $G$ as well) contains an induced $T_0$. Further, by Proposition~\ref{prop-tent-simplicial-universal-T0T1}, $Q$ is $(2P_3,C_4,C_6,C_7)$-free and contains no simplicial vertices. But then Propositions~\ref{prop-one-nontrivial-anticomp-simplicial} and~\ref{prop-H-free-no-universal} together guarantee that $G$ is also $(2P_3,C_4,C_6,C_7)$-free and contains no simplicial vertices. Thus, (a) holds. 
\end{proof}

\section{The structure of $\boldsymbol{(2P_3,C_4,C_6)}$-free graphs that contain an induced $\boldsymbol{C_7}$ or $\boldsymbol{T_0}$} \label{sec:structure} 

The following theorem gives a full structural description of $(2P_3,C_4,C_6)$-free graphs that contain an induced $C_7$ or $T_0$. Recall that the 7-saucer was defined in section~\ref{subsec:SpecPartDef}, whereas the tent was defined in section~\ref{sec:withT0}. 

\begin{theorem} \label{thm-main-withC7T0-full} For any graph $G$, the following are equivalent: 
\begin{itemize} 
\item $G$ is $(2P_3,C_4,C_6)$-free and contains an induced $C_7$ or $T_0$; 
\item $G$ is a 7-saucer or a tent. 
\end{itemize} 
\end{theorem}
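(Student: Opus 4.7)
The plan is to deduce this theorem as an immediate combination of Theorem~\ref{thm-7-saucer} and Theorem~\ref{thm-T0-tent}, which have already been established. There is essentially no new content to prove; the statement is a clean packaging of the two structural classifications from sections~\ref{sec:withC7} and~\ref{sec:withT0}.

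For the forward direction, I would assume that $G$ is $(2P_3,C_4,C_6)$-free and contains an induced $C_7$ or $T_0$, and split into two cases. If $G$ contains an induced $C_7$, then Theorem~\ref{thm-7-saucer} immediately yields that $G$ is a 7-saucer. Otherwise $G$ contains an induced $T_0$ but no induced $C_7$, so $G$ is in fact $(2P_3,C_4,C_6,C_7)$-free and contains an induced $T_0$, and Theorem~\ref{thm-T0-tent} gives that $G$ is a tent.

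For the reverse direction, I would again split into two cases. If $G$ is a 7-saucer, then Theorem~\ref{thm-7-saucer} (or equivalently Proposition~\ref{prop-7-saucer-in-class}) tells us that $G$ is $(2P_3,C_4,C_6)$-free and contains an induced $C_7$, which suffices. If $G$ is a tent, then Theorem~\ref{thm-T0-tent} (or equivalently Proposition~\ref{prop-tent-2P3C4C6C7-free-with-ind-T0}) gives that $G$ is $(2P_3,C_4,C_6,C_7)$-free and contains an induced $T_0$, so in particular $G$ is $(2P_3,C_4,C_6)$-free and contains an induced $T_0$.

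Since both directions are one-line consequences of the two theorems cited, there is no real obstacle: the only thing to notice is the slight asymmetry in the classes (tents are additionally $C_7$-free, while 7-saucers need not avoid $T_0$), which is handled simply by conditioning on whether an induced $C_7$ is present.
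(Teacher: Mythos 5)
Your proposal is correct and matches the paper exactly: the paper's proof is the one-line observation that the theorem follows immediately from Theorems~\ref{thm-7-saucer} and~\ref{thm-T0-tent}, and your case split (conditioning on the presence of an induced $C_7$ to handle the fact that tents are additionally $C_7$-free) is precisely the implicit content of that deduction.
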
 
\begin{proof} 
This follows immediately from Theorems~\ref{thm-7-saucer} and~\ref{thm-T0-tent}. 
\end{proof}

By combining Theorem~\ref{thm-main-withC7} and Corollary~\ref{cor-T0-tent}, we obtain the following. (Recall that the family $\mathcal{M}$ was defined in subsection~\ref{subsec:familyM}, and that graphs $T_0$ and $T_1$ are represented in Figure~\ref{fig:3pentagonT0T1}.) 

\begin{theorem} \label{thm-main-withC7T0} For any graph $G$, the following are equivalent: 
\begin{enumerate}[(a)] 
\item $G$ is $(2P_3,C_4,C_6)$-free, contains an induced $C_7$ or $T_0$, and contains no simplicial vertices; 
\item $G$ has exactly one nontrivial anticomponent, and this anticomponent is a thickening of a graph in $\mathcal{M} \cup \{T_0,T_1\}$; 
\item $G$ can be obtained from a thickening of a graph in $\mathcal{M} \cup \{T_0,T_1\}$ by possibly adding universal vertices to it. 
\end{enumerate} 
\end{theorem}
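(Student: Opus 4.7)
The plan is to assemble the theorem from Theorem~\ref{thm-main-withC7} and Corollary~\ref{cor-T0-tent}, with the equivalence (b) $\Longleftrightarrow$ (c) handled uniformly via Proposition~\ref{prop-non-trivial-anticomp-univ-vertices}. By Proposition~\ref{prop-MM-anticonn}, every graph in $\mathcal{M}$ is anticonnected and has at least two vertices, and by Proposition~\ref{prop-T0T1-2P3C4C6Free}, the same is true of $T_0$ and $T_1$. Part~(\ref{ref-prop-thickening-H-anticonn}) of Proposition~\ref{prop-thickening} then guarantees that any thickening of a graph in $\mathcal{M}\cup\{T_0,T_1\}$ is anticonnected on at least two vertices. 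Applying Proposition~\ref{prop-non-trivial-anticomp-univ-vertices} with $Q$ ranging over all such thickenings immediately yields (b)~$\Longleftrightarrow$~(c).

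For the direction (c)~$\Longrightarrow$~(a), I would split on whether the underlying member of $\mathcal{M}\cup\{T_0,T_1\}$ lies in $\mathcal{M}$ or in $\{T_0,T_1\}$. In the former case, Theorem~\ref{thm-main-withC7} (direction (f)~$\Longrightarrow$~(b)) gives that $G$ is $(2P_3,C_4,C_6)$-free, contains an induced $C_7$, and has no simplicial vertex, so~(a) holds. In the latter case, Corollary~\ref{cor-T0-tent} (direction (c)~$\Longrightarrow$~(a)) gives that $G$ is $(2P_3,C_4,C_6,C_7)$-free, contains an induced $T_0$, and has no simplicial vertex, so again~(a) holds.

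For the direction (a)~$\Longrightarrow$~(c), I would split on whether $G$ contains an induced $C_7$. If it does, then Theorem~\ref{thm-main-withC7} (direction (b)~$\Longrightarrow$~(f)) shows that $G$ can be obtained from a thickening of some graph in $\mathcal{M}$ by possibly adding universal vertices, so~(c) holds. Otherwise $G$ is additionally $C_7$-free and contains an induced $T_0$, so Corollary~\ref{cor-T0-tent} (direction (a)~$\Longrightarrow$~(c)) shows that $G$ can be obtained from a thickening of $T_0$ or $T_1$ by possibly adding universal vertices, so~(c) again holds. There is essentially no obstacle here: Theorem~\ref{thm-main-withC7} and Corollary~\ref{cor-T0-tent} together do all of the structural work, and the present theorem is merely their amalgamation, with Proposition~\ref{prop-non-trivial-anticomp-univ-vertices} serving as the bridge between the ``exactly one nontrivial anticomponent'' formulation and the ``universal vertices'' formulation.
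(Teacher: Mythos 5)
Your proposal is correct and follows exactly the route the paper takes: the paper's proof is just the one-line observation that the theorem follows from Theorem~\ref{thm-main-withC7} and Corollary~\ref{cor-T0-tent}, and your write-up supplies precisely the case split (induced $C_7$ present or not, underlying graph in $\mathcal{M}$ or in $\{T_0,T_1\}$) and the bridge via Proposition~\ref{prop-non-trivial-anticomp-univ-vertices} that this entails.
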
 
\begin{proof} 
This follows immediately from Theorem~\ref{thm-main-withC7} and Corollary~\ref{cor-T0-tent}. 
\end{proof} 

\section{Clique-width} \label{sec:cwd}

The {\em clique-width} of a graph $G$, denoted by $\text{cwd}(G)$, is the minimum number of labels needed to construct $G$ using the following four operations: 
\begin{enumerate} 
\item creation of a new vertex $v$ with label $i$; 
\item disjoint union of two labeled graphs; 
\item joining by an edge every vertex labeled $i$ to every vertex labeled $j$ (where $i \neq j$); 
\item renaming label $i$ to label $j$.  
\end{enumerate} 

The main goal of this section is to prove Theorem~\ref{thm-cwd-in-class-with-C7-T0}, which states that $(2P_3,C_4,C_6)$-free graphs that contain an induced $C_7$ or an induced $T_0$, and contain no simplicial vertices, have bounded clique-width. First, in subsection~\ref{subsec:cwd-substitution}, we prove some technical results concerning the behavior of clique-width under ``substitution.'' Then, in subsection~\ref{subsec:cwd-2P3C4C6Free-withC7T0}, we use Theorem~\ref{thm-main-withC7T0} and the results of subsection~\ref{subsec:cwd-substitution} to prove Theorem~\ref{thm-cwd-in-class-with-C7-T0}.

\subsection{Clique-width and substitution} \label{subsec:cwd-substitution} 

Given graphs $G$ and $H$ on disjoint vertex sets, and given a vertex $v \in V(G)$, we say that a graph $G^*$ is obtained by {\em substituting} $H$ for $v$ in $G$ provided that the following hold: 
\begin{itemize} 
\item $V(G^*) = \big(V(G) \setminus \{v\}\big) \cup V(H)$; 
\item $G^*[V(H)] = H$; 
\item if $|V(G)| \geq 2$, then $G^* \setminus V(H) = G \setminus v$;\footnote{If $|V(G)| = 1$, then $G \setminus v$ is not defined (since our graphs are assumed to be nonnull).} 
\item for all $u \in V(G) \setminus \{v\}$, the following hold: 
\begin{itemize} 
\item if $uv \in E(G)$, then $u$ is complete to $V(H)$ in $G^*$, 
\item if $uv \notin E(G)$, then $u$ is anticomplete to $V(H)$ in $G^*$. 
\end{itemize} 
\end{itemize}

\begin{lemma} \label{lemma-cwd-substitution} Let $G$ and $H$ be graphs on disjoint vertex sets, let $v \in V(G)$, and let $G^*$ be the graph obtained by substituting $H$ for $v$ in $G$. Then $\text{cwd}(G^*) = \max\big\{\text{cwd}(G),\text{cwd}(H)\big\}$. 
\end{lemma}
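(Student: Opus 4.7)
The plan is to prove the two inequalities $\max\{\text{cwd}(G),\text{cwd}(H)\} \leq \text{cwd}(G^*)$ and $\text{cwd}(G^*) \leq \max\{\text{cwd}(G),\text{cwd}(H)\}$ separately. The lower bound is the easier half: I would invoke the well-known monotonicity of clique-width under induced subgraphs (which is immediate from the definition, since any $k$-expression for a graph restricts to a $k$-expression for any induced subgraph). Clearly $H = G^*[V(H)]$ is already an induced subgraph of $G^*$, giving $\text{cwd}(H) \leq \text{cwd}(G^*)$. For the other side, I would pick any $h \in V(H)$ and consider the induced subgraph $G^*\big[(V(G) \setminus \{v\}) \cup \{h\}\big]$; by the definition of substitution, every $u \in V(G) \setminus \{v\}$ is adjacent to $h$ in $G^*$ if and only if $uv \in E(G)$, and consequently this induced subgraph is isomorphic to $G$, giving $\text{cwd}(G) \leq \text{cwd}(G^*)$.

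For the upper bound, let $k := \text{cwd}(G)$, $k' := \text{cwd}(H)$, and $m := \max\{k,k'\}$, and fix a $k$-expression $\sigma_G$ for $G$ and a $k'$-expression $\sigma_H$ for $H$. I plan to build an $m$-expression $\sigma^*$ for $G^*$ by ``plugging $\sigma_H$ into $\sigma_G$ in place of $v$.'' Concretely, $\sigma_G$ contains exactly one leaf that creates $v$, and this leaf assigns $v$ some label $i_0 \in \{1,\dots,k\}$. I will replace that leaf with the subexpression consisting of $\sigma_H$ followed by a short sequence of rename operations that take every label appearing on a vertex of $V(H)$ to the single label $i_0$; since $\sigma_H$ uses only labels in $\{1,\dots,k'\}$ and $i_0 \le k$, this block uses only labels in $\{1,\dots,m\}$. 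The remainder of the tree $\sigma_G$ is left untouched, so $\sigma^*$ is a valid $m$-expression.

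The verification that $\sigma^*$ constructs $G^*$ is a routine induction on the portion of $\sigma_G$ occurring above the creation of $v$. The key observation is the following \emph{invariant}: at every stage of $\sigma^*$ after the substituted block has been evaluated, all vertices of $V(H)$ carry a single common label, namely whatever label $v$ has at the corresponding stage of $\sigma_G$. Since join and rename operations consult only labels, every such operation of $\sigma_G$ acts on the entire set $V(H)$ exactly as it would have acted on the single vertex $v$; hence the edges produced between $V(H)$ and $V(G) \setminus \{v\}$ match those prescribed by the substitution. No spurious edge inside $V(H)$ can arise either, because any join operation of $\sigma_G$ operates on two distinct labels while $V(H)$ remains monochromatic, and the correct internal edges of $H$ are produced by $\sigma_H$ itself before the final relabelings.

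I do not expect any genuinely hard step in this argument; the main thing to be careful about is the label-count bookkeeping, and in particular making sure the substituted block re-uses the label $i_0$ already employed by $\sigma_G$ (rather than introducing a fresh label), so that the total number of labels used by $\sigma^*$ is $m$ rather than $k + k'$.
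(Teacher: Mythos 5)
Your proposal is correct and follows essentially the same route as the paper: both halves match, with the lower bound coming from $H$ and (a copy of) $G$ being induced subgraphs of $G^*$, and the upper bound obtained by replacing the leaf of the expression for $G$ that creates $v$ with an expression for $H$ whose labels are then all renamed to the single label that $v$ would have received. The paper normalizes that label to $1$ by symmetry rather than carrying it as $i_0$, and states the correctness of the resulting expression more briefly, but the construction and the bookkeeping are the same.
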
 
\begin{proof} 
Set $t := \max\big\{\text{cwd}(G),\text{cwd}(H)\big\}$. Since $H$ is an induced subgraph of $G^*$, and since $G$ is isomorphic to an induced subgraph of $G^*$, it is clear that $\text{cwd}(G^*) \geq t$. It remains to show that $\text{cwd}(G^*) \leq t$. We proceed as follows. First, we independently construct the graph $G$ using only labels $1,\dots,t$, as in the definition of clique-width; by symmetry, we may assume that the vertex $v$ is assigned label $1$ when it is first created during the construction of $G$. Next, we construct the graph $H$ using only the labels $1,\dots,t$, as in the definition of clique-width, and then we rename all the labels used on $H$ as $1$. (So, at this point, all vertices of $H$ have label $1$.) We now follow the procedure that we originally followed in our construction of $G$, with one single modification: we do not create the vertex $v$, and instead, at the point when we would have created $v$, we take the disjoint union of the graph we have created and the graph $H$ (with all vertices of $H$ labeled~$1$).\footnote{After this point (i.e.\ the creation of $H$ instead of $v$, with all vertices of $H$ labeled~$1$), we follow exactly the same procedure as we did when we created $G$. (However, $v$ never gets created, and instead, we have $H$.)} This way, we construct the graph $G^*$ using only labels $1,\dots,t$. Thus, $\text{cwd}(G^*) \leq t$. 
\end{proof}

\begin{proposition} \label{prop-cwd-Kk} For all positive integers $k$, we have that 
\begin{displaymath} 
\begin{array}{rcl} 
\text{cwd}(K_k) & = & \left\{\begin{array}{lll} 
1 & \text{if} & k = 1, 
\\
2 & \text{if} & k \geq 2. 
\end{array}\right. 
\end{array} 
\end{displaymath} 
\end{proposition}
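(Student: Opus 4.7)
The plan is to verify the equality in the two cases separately by exhibiting a construction that achieves the claimed number of labels and then arguing the matching lower bound.

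For $k = 1$, the graph $K_1$ is a single vertex, so applying operation~(1) with label $1$ produces it using only one label, giving $\text{cwd}(K_1) \leq 1$. The lower bound $\text{cwd}(K_1) \geq 1$ is immediate, since every nonnull graph requires at least one label to be constructed. Hence $\text{cwd}(K_1) = 1$.

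For $k \geq 2$, I would first establish the upper bound $\text{cwd}(K_k) \leq 2$ by describing the following construction using only labels $1$ and $2$. Create the first vertex with label~$1$. Then, inductively, suppose the current labeled graph is a complete graph on some $j < k$ vertices, all labeled~$1$. Create a new vertex with label~$2$ (operation~1), take the disjoint union of this one-vertex graph with the current graph (operation~2), apply the join operation between labels $1$ and $2$ (operation~3) so that the new vertex becomes adjacent to every existing vertex, and then rename label $2$ to label $1$ (operation~4). The resulting graph is a complete graph on $j+1$ vertices, all labeled~$1$. Iterating this $k-1$ times yields $K_k$ using only two labels.

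For the lower bound, I would observe that $K_k$ contains at least one edge whenever $k \geq 2$. In any construction realizing $K_k$, at some step an edge must be produced, and the only operation that creates edges is operation~(3), which requires two distinct labels $i \neq j$. Therefore any valid construction of a graph with at least one edge uses at least two labels, giving $\text{cwd}(K_k) \geq 2$, and together with the upper bound, $\text{cwd}(K_k) = 2$.

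There is no real obstacle here; the argument is entirely routine, and both directions follow directly from the definition of clique-width. The only thing to be careful about is making the inductive construction for the upper bound explicit enough that it is clear each new vertex becomes adjacent to all previous ones using only two labels at any moment.
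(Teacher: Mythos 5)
Your proof is correct, but it takes a different route from the paper. The paper derives the result from its substitution machinery: it notes that $K_{k+1}$ is obtained by substituting a copy of $K_2$ for a vertex of $K_k$, so Lemma~\ref{lemma-cwd-substitution} gives $\text{cwd}(K_{k+1}) = \max\{\text{cwd}(K_k),\text{cwd}(K_2)\}$, and an easy induction starting from $\text{cwd}(K_1)=1$ and $\text{cwd}(K_2)=2$ finishes the argument; because the substitution lemma is an exact equality, the lower bound comes for free once $\text{cwd}(K_2)=2$ is known. You instead give a fully explicit two-label construction (create a new vertex with label $2$, join it to everything labeled $1$, rename $2$ to $1$, repeat) together with the separate observation that any graph with an edge needs at least two labels since the join operation requires distinct labels. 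Your argument is self-contained and does not depend on Lemma~\ref{lemma-cwd-substitution}, which is a reasonable trade-off: the paper's version is shorter in context because that lemma has already been proved, while yours would stand on its own. Both are complete and correct.
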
 
\begin{proof} 
It is clear that $\text{cwd}(K_1) = 1$ and $\text{cwd}(K_2) = 2$. Moreover, it is clear that for all positive integers $k$, $K_{k+1}$ can be obtained by substituting a copy of $K_2$ for a vertex of a copy of $K_k$. The result now follows from Lemma~\ref{lemma-cwd-substitution} via an easy induction on $k$. 
\end{proof} 

\begin{lemma} \label{lemma-cwd-thickening} Let $G^*$ be a thickening of a graph $G$. Then $\text{cwd}(G) \leq \text{cwd}(G^*) \leq \max\{\text{cwd}(G),2\}$. Consequently, if $G$ has at least one edge, then $\text{cwd}(G^*) = \text{cwd}(G)$. 
\end{lemma}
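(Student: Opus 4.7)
The plan is to prove the two inequalities of the first claim separately, and then deduce the ``consequently'' clause. For the lower bound $\text{cwd}(G) \leq \text{cwd}(G^*)$, I will simply observe that $G$ is (isomorphic to) an induced subgraph of $G^*$: using the notation from the definition of a thickening, choose an arbitrary $x_v \in X_v$ for each $v \in V(G)$, and note that $G^*[\{x_v \mid v \in V(G)\}]$ is isomorphic to $G$ (since each $X_v$ is nonempty, and two $X_u,X_v$ are either complete or anticomplete to each other according to whether $u,v$ are adjacent in $G$). Because clique-width is monotone under induced subgraphs (a fact already used implicitly in the proof of Lemma~\ref{lemma-cwd-substitution}), the lower bound follows.

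For the upper bound $\text{cwd}(G^*) \leq \max\{\text{cwd}(G),2\}$, I will iterate Lemma~\ref{lemma-cwd-substitution}. Enumerate $V(G)$ as $v_1,\dots,v_n$ and set $G_0 := G$; for $i = 1,\dots,n$, let $G_i$ be the graph obtained from $G_{i-1}$ by substituting the complete graph on vertex set $X_{v_i}$ (which is isomorphic to $K_{|X_{v_i}|}$, since each $X_v$ is a clique of $G^*$) for the vertex $v_i$. Directly from the definition of a thickening, $G_n = G^*$. By Lemma~\ref{lemma-cwd-substitution} and Proposition~\ref{prop-cwd-Kk}, at each step we have
\[
\text{cwd}(G_i) \;=\; \max\bigl\{\text{cwd}(G_{i-1}),\,\text{cwd}(K_{|X_{v_i}|})\bigr\} \;\leq\; \max\bigl\{\text{cwd}(G_{i-1}),\,2\bigr\},
\]
and a straightforward induction on $i$ yields $\text{cwd}(G^*) = \text{cwd}(G_n) \leq \max\{\text{cwd}(G),2\}$, as required.

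The ``consequently'' clause is then immediate. If $G$ has at least one edge, then $K_2$ is an induced subgraph of $G$, so by induced-subgraph monotonicity together with Proposition~\ref{prop-cwd-Kk}, we get $\text{cwd}(G) \geq \text{cwd}(K_2) = 2$. Hence $\max\{\text{cwd}(G),2\} = \text{cwd}(G)$, and the two inequalities collapse to $\text{cwd}(G^*) = \text{cwd}(G)$.

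I do not anticipate any real obstacles here: both directions reduce mechanically to tools already in hand (Lemma~\ref{lemma-cwd-substitution} and Proposition~\ref{prop-cwd-Kk}). The only mildly delicate bookkeeping point is verifying that the iterated-substitution construction genuinely produces $G^*$, but this is immediate from the definition of a thickening, since that definition already describes $G^*$ as the graph obtained from $G$ by replacing each vertex $v$ with a clique $X_v$ and preserving the adjacency/nonadjacency structure across pairs of parts.
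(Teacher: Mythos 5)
Your proposal is correct and follows essentially the same route as the paper: the paper also deduces the chain of inequalities from Lemma~\ref{lemma-cwd-substitution} and Proposition~\ref{prop-cwd-Kk} by viewing $G^*$ as obtained from $G$ by substituting complete graphs for its vertices, and derives the ``consequently'' clause from the observation that a graph with an edge has clique-width at least $2$. Your write-up merely makes the iteration and the lower bound explicit.
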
 
\begin{proof} 
Clearly, any graph that has at one edge has clique-width at least 2; therefore, the second statement follows from the first. Meanwhile, the first statement follows from Lemma~\ref{lemma-cwd-substitution} and Proposition~\ref{prop-cwd-Kk}, since $G^*$ can be obtained from $G$ by substituting complete graphs for the vertices of $G$. 
\end{proof}

\begin{lemma} \label{lemma-add-univ-cwd} If a graph $G$ can be obtained from a graph $Q$ by possibly adding universal vertices to $Q$, then $\text{cwd}(Q) \leq \text{cwd}(G) \leq \max\big\{\text{cwd}(Q),2\big\}$. 
\end{lemma}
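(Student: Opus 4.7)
The plan is to deduce both bounds from Lemma~\ref{lemma-cwd-substitution} and Proposition~\ref{prop-cwd-Kk}. First, if $V(G) = V(Q)$, then $G = Q$ and the desired inequalities are immediate, so I may assume $V(G) \setminus V(Q) \neq \emptyset$. Set $W := V(G) \setminus V(Q)$ and $k := |W| \geq 1$; by the definition of ``obtained by possibly adding universal vertices,'' $W$ is a clique of $G$ complete to $V(Q)$ in $G$.

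The key step is to recognize $G$ as a substitution into a complete graph. Let $K$ be a copy of $K_{k+1}$ on a vertex set disjoint from $V(Q)$, fix any $v \in V(K)$, and let $G^*$ be the graph obtained by substituting $Q$ for $v$ in $K$. I would check that $G \cong G^*$: by construction $G^*[V(Q)] = Q$, while $V(K) \setminus \{v\}$ is a clique of size $k$ that is complete to $V(Q)$ in $G^*$ (since $K$ is complete, every vertex of $V(K) \setminus \{v\}$ is adjacent to $v$ in $K$). This exactly matches how $W$ sits inside $G$. Applying Lemma~\ref{lemma-cwd-substitution} then gives $\text{cwd}(G) = \text{cwd}(G^*) = \max\{\text{cwd}(K), \text{cwd}(Q)\}$, and since $k+1 \geq 2$, Proposition~\ref{prop-cwd-Kk} yields $\text{cwd}(K) = 2$. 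Hence $\text{cwd}(G) = \max\{2, \text{cwd}(Q)\}$, which at once gives both $\text{cwd}(Q) \leq \text{cwd}(G)$ and $\text{cwd}(G) \leq \max\{\text{cwd}(Q), 2\}$. There is no substantive obstacle here; the only item requiring care is verifying that adding a clique of universal vertices to $Q$ really is the same operation (up to isomorphism) as substituting $Q$ into a single vertex of $K_{k+1}$, which follows directly from comparing the two definitions.
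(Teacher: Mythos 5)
Your proof is correct and takes essentially the same approach as the paper: both recognize $G$ as a substitution of $Q$ (together with a complete graph) into a small complete graph and then invoke Lemma~\ref{lemma-cwd-substitution} and Proposition~\ref{prop-cwd-Kk}. The only cosmetic difference is that you perform a single substitution into $K_{k+1}$, whereas the paper substitutes twice into a $K_2$; your version even yields the slightly sharper equality $\text{cwd}(G) = \max\{\text{cwd}(Q),2\}$ when $G \neq Q$.
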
 
\begin{proof} 
Fix graphs $G$ and $Q$ such that $G$ can be obtained from $Q$ by possibly adding universal vertices to $Q$. We must show that $\text{cwd}(Q) \leq \text{cwd}(G) \leq \max\big\{\text{cwd}(Q),2\big\}$. We may assume that $G \neq Q$, for otherwise, the result is immediate. Since $Q$ is an induced subgraph of $G$, it is clear that $\text{cwd}(Q) \leq \text{cwd}(G)$. It remains to show that $\text{cwd}(G) \leq \max\big\{\text{cwd}(Q),2\big\}$. 

Set $W := V(G) \setminus V(Q)$, and set $k := |W|$. Then $G$ can be obtained from a copy of $K_2$ by first substituting $Q$ for one vertex of the $K_2$, and then substituting a copy of $K_k$ for the other vertex of the $K_2$. So, by Lemma~\ref{lemma-cwd-substitution}, we have that $\text{cwd}(G) \leq \max\big\{\text{cwd}(K_2),\text{cwd}(Q),\text{cwd}(K_k)\big\}$. By Proposition~\ref{prop-cwd-Kk}, we have that $\text{cwd}(K_2) = 2$ and $\text{cwd}(K_k) \leq 2$, and we deduce that $\text{cwd}(G) \leq \max\big\{\text{cwd}(Q),2\big\}$, which is what we needed to show. 
\end{proof}

\begin{lemma} \label{lemma-thickening-add-universal} Let $G$ be a graph that can be obtained from a thickening of a graph $H$ by possibly adding universal vertices to it. Then $\text{cwd}(G) \leq \max\{\text{cwd}(H),2\} \leq \max\{|V(H)|,2\}$. 
\end{lemma}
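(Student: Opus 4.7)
The plan is to chain together the two preceding lemmas about clique-width, namely Lemma~\ref{lemma-add-univ-cwd} (which controls the effect of adding universal vertices) and Lemma~\ref{lemma-cwd-thickening} (which controls the effect of thickening), and then to apply the trivial upper bound $\text{cwd}(H)\le |V(H)|$ for the last inequality.

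First, I would fix a graph $H^*$ that is a thickening of $H$ and such that $G$ can be obtained from $H^*$ by possibly adding universal vertices. Applying Lemma~\ref{lemma-add-univ-cwd} to the pair $(G, H^*)$ yields
\[
\text{cwd}(G) \;\le\; \max\{\text{cwd}(H^*),\,2\}.
\]
Next, since $H^*$ is a thickening of $H$, Lemma~\ref{lemma-cwd-thickening} gives
\[
\text{cwd}(H^*) \;\le\; \max\{\text{cwd}(H),\,2\}.
\]
Substituting the second inequality into the first and simplifying the nested maximum yields $\text{cwd}(G) \le \max\{\text{cwd}(H),2\}$, which is the first inequality of the lemma.

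For the second inequality, I would observe that every graph on $n$ vertices trivially has clique-width at most $n$ (assign each vertex its own distinct label when it is first created, and then perform all required join operations). Thus $\text{cwd}(H) \le |V(H)|$, and it follows that $\max\{\text{cwd}(H),2\}\le \max\{|V(H)|,2\}$. Since neither step involves any genuine obstacle (the work has already been done in Lemmas~\ref{lemma-cwd-thickening} and~\ref{lemma-add-univ-cwd}), the proof is essentially a one-line composition of these two results followed by the trivial bound.
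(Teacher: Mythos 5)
Your proof is correct and follows exactly the same route as the paper: fix a thickening $H^*$ with $G$ obtained from it by adding universal vertices, apply Lemma~\ref{lemma-add-univ-cwd} and Lemma~\ref{lemma-cwd-thickening} in succession, and finish with the trivial bound $\text{cwd}(H) \leq |V(H)|$. No issues.
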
 
\begin{proof} 
Let $H^*$ be a thickening of $H$ such that $G$ can be obtained from $H^*$ by possibly adding universal vertices to it. By Lemma~\ref{lemma-add-univ-cwd}, we know that $\text{cwd}(G) \leq \max\{\text{cwd}(H^*),2\}$, and by Lemma~\ref{lemma-cwd-thickening}, we know that $\text{cwd}(H^*) \leq \max\big\{\text{cwd}(H),2\big\}$. So, $\text{cwd}(G) \leq \max\{\text{cwd}(H),2\}$. Finally, it is clear that $\text{cwd}(H) \leq |V(H)|$ (indeed, we can create $H$ by simply using a different label for each vertex of $H$), and the result follows. 
\end{proof}

\subsection{The clique-width of $\boldsymbol{(2P_3,C_4,C_6)}$-free graphs that contain an induced $\boldsymbol{C_7}$ or an induced $\boldsymbol{T_0}$} \label{subsec:cwd-2P3C4C6Free-withC7T0}

\begin{theorem} \label{thm-cwd-in-class-with-C7-T0} Let $G$ be a $(2P_3,C_4,C_6)$-free graph that contains an induced $C_7$ or an induced $T_0$. Assume that $G$ contains no simplicial vertices. Then $\text{cwd}(G) \leq 12$. 
\end{theorem}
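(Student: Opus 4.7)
The plan is to simply chain together the structural theorem from Section~\ref{sec:structure} with the clique-width tool from the previous subsection. By Theorem~\ref{thm-main-withC7T0}, since $G$ is a $(2P_3,C_4,C_6)$-free graph containing an induced $C_7$ or an induced $T_0$ and with no simplicial vertex, $G$ can be obtained from a thickening of some graph $H \in \mathcal{M} \cup \{T_0,T_1\}$ by possibly adding universal vertices to it. So I would begin the proof by invoking Theorem~\ref{thm-main-withC7T0} to fix such an $H$.

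Next, I would apply Lemma~\ref{lemma-thickening-add-universal}, which directly gives $\text{cwd}(G) \leq \max\{|V(H)|,2\}$. All that remains is to bound $|V(H)|$. By the definition of $\mathcal{M}$ given in subsection~\ref{subsec:familyM}, every member of $\mathcal{M}$ has at most $12$ vertices (the maximum being attained by $M_0$ itself). Meanwhile, $T_0$ has $9$ vertices and $T_1$ has $10$ vertices. Hence $|V(H)| \leq 12$ in all cases, so $\text{cwd}(G) \leq \max\{12,2\} = 12$.

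There is essentially no obstacle here, since the heavy lifting has already been done: the structural classification (Theorem~\ref{thm-main-withC7T0}) reduces the class to thickenings of a finite list of small graphs (plus universal vertices), and Lemma~\ref{lemma-thickening-add-universal} is purpose-built to convert such a description into a clique-width bound. The only thing worth double-checking is the claim that every member of $\mathcal{M}$ has at most $12$ vertices; this follows immediately from the fact that $\mathcal{M}$ consists of induced subgraphs of $M_0$ (on $12$ vertices) together with $M_1,M_2,M_3$, which have $9$, $10$, and $10$ vertices respectively. So the bound $12$ that appears in the statement is exactly $|V(M_0)|$, and the proof is a short deduction rather than a genuine argument.
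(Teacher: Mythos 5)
Your proposal is correct and is essentially identical to the paper's own proof: both invoke Theorem~\ref{thm-main-withC7T0} to reduce to a thickening of a graph in $\mathcal{M} \cup \{T_0,T_1\}$ plus universal vertices, then apply Lemma~\ref{lemma-thickening-add-universal} together with the observation that every such graph has at most $12$ vertices. Your vertex counts ($M_0$ has $12$, $M_1$ has $9$, $M_2$ and $M_3$ have $10$, $T_0$ has $9$, $T_1$ has $10$) are accurate, so the bound follows exactly as you describe.
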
 
\begin{proof} 
By Theorem~\ref{thm-main-withC7T0}, we know that $G$ can be obtained from a thickening of a graph in $\mathcal{M} \cup \{T_0,T_1\}$ by possibly adding universal vertices to it. Since every vertex in $\mathcal{M} \cup \{T_0,T_1\}$ has at most 12 vertices, Lemma~\ref{lemma-thickening-add-universal} guarantees that $\text{cwd}(G) \leq 12$. 
\end{proof}

\section{Some algorithmic considerations} \label{sec:Alg} 

$(2P_3,C_4,C_6)$-free graphs that contain an induced $C_7$ are trivially recognizable in $O(n^7)$ time. Similarly, since $T_0$ has nine vertices, $(2P_3,C_4,C_6,C_7)$-free graphs that contain an induced $T_0$ can be recognized in $O(n^9)$ time. Further, as stated in the introduction, \textsc{Graph Coloring} can be solved in polynomial time for graphs of bounded clique-width~\cite{Rao}, and consequently, Theorem~\ref{thm-cwd-in-class-with-C7-T0} guarantees that \textsc{Graph Coloring} can be solved in polynomial time for $(2P_3,C_4,C_6)$-free graphs that contain an induced $C_7$ or an induced $T_0$. However, as pointed out in the introduction, coloring algorithms relying on clique-width are very slow (albeit polynomial). 

Using our structural results, as well as various results from the literature, we can in fact recognize and color $(2P_3,C_4,C_6)$-free graphs that contain an induced $C_7$ or an induced $T_0$ in only $O(n^3)$ time. We note, however, that the coloring algorithm relies on integer programming and is therefore not combinatorial. 

\subsection{Recognition} \label{subsec:AlgRec} 

We sketch the algorithm for recognizing $(2P_3,C_4,C_6)$-free graphs that contain an induced $C_7$; the algorithm for recognizing $(2P_3,C_4,C_6,C_7)$-free graphs that contain an induced $T_0$ is very similar, except that it relies on Theorem~\ref{thm-T0-tent} and Corollary~\ref{cor-T0-tent}, rather than on Theorems~\ref{thm-7-saucer} and~\ref{thm-main-withC7}. 

So, let us suppose that we are given an input graph $G$ for which we need to determine whether it is $(2P_3,C_4,C_6)$-free with an induced $C_7$. By Theorem~\ref{thm-7-saucer}, we just need to check if $G$ is a 7-saucer. We start by computing a maximal sequence $v_1,\dots,v_t$ of vertices such that for all $i \in \{1,\dots,t\}$, the vertex $v_i$ is simplicial in the graph $G \setminus \{v_1,\dots,v_{i-1}\}$. This can trivially be done in $O(n^4)$ time: we search for a simplicial vertex by examining the neighborhood of each vertex, and if we find a simplicial vertex, we delete it and repeat the process until no simplicial vertices remain. However, by using an algorithm described in the introduction of~\cite{HHMMSimplicial},\footnote{Technically, we need a minor modification of this algorithm. See Lemma~2.5 of~\cite{FIK}.} the sequence $v_1,\dots,v_t$ can in fact be found in only $O(n^3)$ time. Set $A := \{v_1,\dots,v_t\}$. If $V(G) = A$, then $v_1,\dots,v_t$ is a simplicial elimination ordering of $G$, and in this case, $G$ is chordal (by~\cite{FulkersonGross}) and therefore does not belong to our class (because it does not contain an induced $C_7$). Let us now assume that $A \subsetneqq V(G)$, and consider the graph $G' := G \setminus A$. We form the set $W$ of all universal vertices of $G'$, and we form the graph $G'' := G' \setminus W$. Note that the twin relation in $G''$ is an equivalence relation,\footnote{We technically mean the binary relation on $V(G'')$ such that vertices $x,y \in V(G'')$ are related if and only if $N_{G''}[x] = N_{G''}[y]$, that is, if and only if either $x = y$ or $x,y$ are twins in $G''$.} and by~\cite{CapEvenHoleFree, Spinrad}, the twin classes of $G''$ can be computed in $O(n^2)$ time.\footnote{More precisely, an exercise from~\cite{Spinrad} asserts that such an algorithm exists, and a detailed proof can be found in~\cite{CapEvenHoleFree}.} Now, form the set $X$ by taking exactly one vertex out of each twin class of $G''$, and set $G_X := G[X]$. Clearly, $G''$ is a thickening of $G_X$. We then check if $G_X$ is isomoprhic to any graph in $\mathcal{M}$; since all graphs in $\mathcal{M}$ have at most 12 vertices, this can be done in $O(1)$ time. If $G_X$ is not isomoprhic to any graph in $\mathcal{M}$, then Theorem~\ref{thm-main-withC7} guarantees that $G$ does not belong to our class, i.e.\ that $G$ either is not $(2P_3,C_4,C_6)$-free or does not contain an induced $C_7$.\footnote{Technically, we are also using the fact that no graph in $\mathcal{M}$ contains a pair of twins.} Let us now assume that $G_X$ is isomorphic to a graph in $\mathcal{M}$. Then $G'$ admits a special partition, and this special partition can easily be ``reconstructed'' using the twin classes of $G_X$ and the set $W$. So, we now have a special partition $(X_0,\dots,X_6;Y_0,\dots,Y_6;Z_0,\dots,Z_6;W)$ of $G'$. It remains to examine $A$. If $A = \emptyset$, then $G$ is a 7-saucer with an associated 7-saucer partition $(X_0,\dots,X_6;Y_0,\dots,Y_6;Z_0,\dots,Z_6;W;A)$, and we are done. So, assume that $A \neq \emptyset$. If $A$ is not anticomplete to $X_0 \cup \dots \cup X_6$, or if there exists some $i \in \mathbb{Z}_7$ such that there is an edge between $A$ and $Y_i$, and simultaneously, $Z_{i+2} \neq \emptyset$, then $G$ is not a 7-saucer, and we are done.\footnote{The reader may possibly object that all that this proves is that $(X_0,\dots,X_6;Y_0,\dots,Y_6;Z_0,\dots,Z_6;W;A)$ is not a 7-saucer partition of $G$, but that it does not preclude the possibility that a different 7-saucer partition of $G$ might exist. However, by examining the proof of Lemma~\ref{lemma-main-withC7}, and keeping in mind that graphs that admit a special partition do not contain simplicial vertices (by Theorem~\ref{thm-main-withC7}), we see that this cannot possibly happen.} Let us assume that $A$ is anticomplete to $X_0 \cup \dots \cup X_6$, and that for all $i \in \mathbb{Z}_7$, either $A$ is anticomplete to $Y_i$, or $Z_{i+2} = \emptyset$. We then compute the vertex sets of the components $A_1,\dots,A_{\ell}$ of $A$ (using, for example, breadth-first-search), and we check if $A_1,\dots,A_{\ell}$ are all cliques. If not, then $G$ is not a 7-saucer. So, let us assume that $A_1,\dots,A_{\ell}$ are indeed cliques. We then compute the degrees (in $G$) of all vertices in $A$, and for each $i \in \{1,\dots,\ell\}$, we order $A_i$ as $A_i = \{a_1^i,\dots,a_{r_i}^i\}$ so that $N_G[a_{r_i}^i] \leq \dots \leq N_G[a_1^i]$, and we check if $N_G[a_{r_i}^i] \subseteq \dots \subseteq N_G[a_1^i]$. If we do indeed have that $N_G[a_{r_i}^i] \subseteq \dots \subseteq N_G[a_1^i]$ for all $i \in \mathbb{Z}_7$, then $G$ is a 7-saucer with an associated 7-saucer partition $(X_0,\dots,X_6;Y_0,\dots,Y_6;Z_0,\dots,Z_6;W;A)$, and we are done. Otherwise, $G$ is not a 7-saucer. Clearly, the running time of the algorithm is $O(n^3)$.

\subsection{Coloring} \label{subsec:AlgCol}

Let us sketch an $O(n^3)$ coloring algorithm for $(2P_3,C_4,C_6)$-free graphs that contain an induced $C_7$ or $T_0$. We note that our algorithm is robust in the following sense: for any input graph $G$, we either produce an optimal coloring of $G$, or we correctly certify that the graph $G$ is not in our class, i.e.\ that it is either not $(2P_3,C_4,C_6)$-free, or it contains no induced~$C_7$ and no induced~$T_0$. 

Given an input graph $G$, we first compute a maximal sequence $v_1,\dots,v_t$ of vertices such that for all $i \in \{1,\dots,t\}$, the vertex $v_i$ is simplicial in the graph $G \setminus \{v_1,\dots,v_{i-1}\}$; as explained in section~\ref{subsec:AlgRec}, this can be done in $O(n^3)$ time. If $V(G) = \{v_1,\dots,v_t\}$, then $v_1,\dots,v_t$ is in fact a simplicial elimination ordering of $G$, and we can obtain an optimal coloring of $G$ by simply coloring $G$ greedily using the ordering $v_t,\dots,v_1$. So, assume that $\{v_1,\dots,v_t\} \subsetneqq V(G)$, and set $G' := G \setminus \{v_1,\dots,v_t\}$. We then compute the set $W$ of all universal vertices of $G'$, and we set $G'' := G \setminus W$. Once again as in subsection~\ref{subsec:AlgRec}, we compute the twin classes of $G''$ in $O(n^2)$ time. Now, by Theorem~\ref{thm-main-withC7T0-full}, either $G''$ is a thickening of a graph in $\mathcal{M} \cup \{T_0,T_1\}$, or $G$ does not belong to our class. But note that each graph in $\mathcal{M} \cup \{T_0,T_1\}$ has at most 12 vertices. We now simply check how many twin classes $G''$ has: if it has more than 12, then it is not a thickening of a graph in $\mathcal{M} \cup \{T_0,T_1\}$, and consequently, $G$ does not belong to our class. So, let us assume that $G''$ has at most 12 twin classes. We can then use integer programming to compute an optimal coloring of $G''$ in $O(n+m)$ time, as explained in~\cite{Koutecky, Lampis}.\footnote{We do note that the $O(n+m)$ hides a large multiplicative constant.} We now finish coloring $G$ in the obvious way: we assign all vertices of $W$ distinct colors, not used on $G''$, and then we complete our coloring by assigning colors to $v_t,\dots,v_1$ greedily. The total running time of our coloring algorithm is $O(n^3)$.

\small{
 
}

\end{document}